\newtheorem{theorem}{Theorem}
\newtheorem{lemma}{Lemma}
\newtheorem{proposition}{Proposition}
\newtheorem{remark}{Remark}
\newenvironment{proof}[1][Proof]{\noindent\textbf{#1.} }{\ \rule{0.5em}{0.5em}}
\newcommand{\Real}{\mathbb{R}}
\title{Extreme Analysis of a Non-convex and Nonlinear Functional of Gaussian Processes -- On the Tail Asymptotics of Random Ordinary Differential Equations}
\author{Jingchen Liu and Xiang Zhou}
\begin{document}
\maketitle

\begin{abstract}
	In this paper, we consider a  stochastic system described by a differential equation admitting a  spatially varying random coefficient.
	The differential equation  has been employed to model various  static physics systems such as elastic  deformation, water flow, electric-magnetic fields, temperature distribution, etc.
	A random coefficient is introduced to account for the system's uncertainty and/or imperfect measurements.
	This random coefficient is described by a Gaussian process (the input process) and thus the solution to the differential equation (under certain boundary conditions) is a complexed functional of the input Gaussian process.
	In this paper, we focus the analysis on the one-dimensional case and  derive   asymptotic approximations of the tail probabilities of the solution to the equation that has various physics interpretations under different contexts.
	This analysis rests on the literature of the extreme analysis of Gaussian processes (such as the tail approximations of the supremum) and extends the analysis to more complexed functionals.
\end{abstract}

\section{Introduction}\label{SecIntro}

Gaussian processes are often used to describe spatially varying uncertainties. In this paper, we consider the tail event of a non-linear and non-convex functional of a Gaussian process that arises naturally from the solution to a differential equation employed in various applications.
	Differential equation is a classic and powerful tool for the description of physics systems. Very often, microscopic heterogeneity or uncertainty of parameters exists such that the system cannot be completely characterized by a deterministic differential equation.
Stochastic models are usually employed, in combination with  differential equations, to account for such heterogeneity and/or uncertainty.
	In this paper, we are interested in one specific  differential equation that has applications to several subfields of physics and also has a close connection to stochastic differential equations.
We consider the following differential equation concerning a real-valued solution $v(x)$ defined on a $d$-dimensional compact subset  $\mathcal S \subset \Real^d$
%\begin{equation}\label{eqn:EllipticPDE}
%\begin{cases}  \nabla \cdot \sigma(x) = p(x),
%\\
%\sigma(x)  =  a(x)\nabla v(x), \quad \forall~ x \in \mathcal S,
%\end{cases}
%\end{equation}
\begin{equation}\label{eqn:EllipticPDE}
\nabla \cdot \{a(x) \nabla v(x) \}= p(x)\quad \mbox{for $x\in\mathcal S$}
\end{equation}
	where $a(x)$ and $p(x)$ are real-valued functions. The notation $\nabla v(x)$ is the gradient of $v(x)$ and $\nabla \cdot \{a(x) \nabla v(x) \}$ is the divergence of the vector field $a(x) \nabla v(x)$.  Appropriate boundary conditions will be imposed. The applications of this equation are discussed in Section \ref{SecApp}.
%	We start with the general form of the PDE and will later focus  the analysis on the one dimensional case, that is, $\mathcal S$ is a closed interval.
%	In this paper,  the coefficient $a(x)$ is modeled as a stochastic process and thus the corresponding solution $v(x)$  itself (as a complexed function of $a(x)$) is also a stochastic process.
%	We are interested in developing strong asymptotic approximations of the tail probabilities associated with $v(x)$, in particular, $P(\max_{x\in \mathcal S} |\nabla v(x)|> b)$ as $b\rightarrow \infty$. Such tail probabilities serve as stability measures of systems described by the above stochastic PDE in presence of uncertainties.

	Probability models may join the description of the system in multiple ways. In this paper, we adopt the formulation that the process $a(x)$ is a spatially varying stochastic process and thus the corresponding solution $v(x)$  itself (as a complexed function of $a(x)$) is also a stochastic process.
	In the applications we will discuss momentarily, the process $a(x)$ is physically constrained to be positive. % (e.g.~the elasticity and the conductivity tensors are well defined only on $\Real^+$).
	A natural modeling approach is  that $a(x)$ is a log-normal process, that is,
%In particular, we assume that $\xi(x) \triangleq \log a(x)$ is a homogeneous and differentiable Gaussian process.
%The precise technical conditions  will be provided in the following section.
%That is, the random function $a(x)$ admits the following representation
\begin{equation}
\label{eqn: log-normal-def}
a(x)= e^{-\sigma \xi(x)}
\end{equation}
where   $\xi(x)$ is a  Gaussian process living on $\mathcal S$, that is, for each finite subset $\{x_1,...,x_n\}\subset \mathcal S$, $(\xi(x_1),...,\xi(x_n))$ follows a multivariate normal distribution. The scalar $\sigma$ is  the noise amplitude  and is a positive constant.
We write $\log a(x) = -\sigma\xi(x)$ (instead of $\log a(x) =\sigma \xi(x)$) simply for notational convenience and it does not alter the problem.
	We are interested in developing sharp asymptotic approximations of the tail probabilities associated with $v(x)$, in particular, $P(\max_{x\in \mathcal S} |\nabla v(x)|> b)$ as $b\rightarrow \infty$. Such tail probabilities serve as stability measures of systems described by \eqref{eqn:EllipticPDE} in presence of uncertainties. Detailed discussions of the physics interpretation of $\nabla v(x)$ in different contexts as well as the connection to stochastic differential equations are provided in Section \ref{SecApp}.

In this paper, we restrict the analysis to the one-dimensional differential equation
\begin{equation}\label{eq}
(a(x)v'(x))'=p(x),\quad x\in [0,L]
\end{equation}
%where $p(x)$ is a deterministic and known function and $\log a(x)$ is a Gaussian process,
where $v'(x)$ denotes the derivative of $v(x)$ with respect to the spatial variable $x$.
The corresponding tail probability becomes
	\begin{equation}\label{tail}
	w(b) \triangleq P(\max_x |v'(x)|>b)\quad \mbox{as $b\rightarrow \infty$.}
	\end{equation}
	Under the Dirichlet boundary condition, $u(0) = u(L) =0$, and with representation \eqref{eqn: log-normal-def},   equation \eqref{eq} has a closed form solution
$v(x) = \int_0^x F(t)e^{\sigma\xi(t)}dt - \int_0^x e^{\sigma\xi(t)}dt\times \int_0^L F(s)e^{\sigma\xi(s)} ds/{\int_0^L e^{\sigma\xi(s)}  ds}$
where $F(x) \triangleq \int _0 ^x p(t) dt$
and its derivative is
\begin{equation} \label{eqn:Dirch-solution}
%  v(x) &= \int_0^x F(t)a^{-1}(t)dt - \int_0^x a^{-1}(t)dt
%\frac{\int_0^L F(s)a^{-1}(s) ds}{\int_0^L a^{-1}(s) ds}, \notag\\
  v'(x) = e^{\sigma\xi(x)} \Big\{ F(x) -
\frac{\int_0^L F(t)e^{\sigma\xi(t)} dt}{\int_0^L e^{\sigma\xi(t)} dt} \Big\}.
\end{equation}
Therefore, $w(b)$ is the tail probability of a nonlinear and non-convex function of $\xi(x)$.

	The contribution of this paper is the derivation of a closed form sharp asymptotic approximations of $w(b)$ as $b\to \infty$. In particular, we discuss two situations: $p(x)$ is a constant and $|p(x)|$ admits one unique maximum in the interior of $[0,L]$.
	In addition to the asymptotic approximations of $w(b)$, this analysis also implies   qualitative descriptions of the most likely sample path along which $\max_x |v'(x)|$ achieves a high level. First, if $p(x)$ is a constant, then the maximum of $|v'(x)|$ is likely to be obtained at either end of the interval and it is unlikely to be obtained in the interior.
	Second, if $|p(x)|$ admits one unique interior maximum at $x_* = \arg\max_x|p(x)|$, then the maximum of $|v'(x)|$ is likely to be obtained at either of the three locations, $0$, $L$, or close to $x_*$, depending on the specific values of $p(0)$, $p(L)$, and $p(x_*)$. One notable feature  is that the maximizer of $|v'(x)|$ (in the elastic deformation application to be discussed in Section \ref{SecApp}, $v'(x)$ is the strain of a piece of material in presence of external force) is not necessarily obtained at $x_*$ where $|p(x)|$ (the external force) is maximized. 
%The particular form of the  approximation is the sum of three terms each of which corresponds to one of the probabilities $P(|v'(0)|>b)$, $P(|v'(L)|> b)$, and $P(\max_{|x-x_*|<\varepsilon} |v'(x)|>b)$. Thus, for a specific function $p(x)$, we can identify the most likely high excursion location of $|v'(x)|$ by comparing among these three approximation terms. 
A more detailed discussion on the most probable sample path given the rare event is provided in Section \ref{SecHeu}.

Upon considering $\max |v'(x)|$ as  a  functional of the input Gaussian process $\xi(x) $,
the current analysis sits well in the literature of rare-event analysis for Gaussian processes.
The technical development employs many tools in this literature.
A Gaussian process living on a general manifold is usually called a Gaussian random field.
The study of extremes of  Gaussian random fields focuses mostly on
the tail probabilities of the supremum of the field. The results contain
general bounds on $P(\max \xi(x)>b)$ as well as sharp asymptotic approximations as $%
b\rightarrow \infty$. A partial literature contains \cite%
{LS70,Bor03,LT91,Berman85}.
Several methods have been introduced to obtain bounds and asymptotic
approximations, each of which imposes different regularity conditions on the
random fields.
A short list of such methods includes the double sum method \cite{Pit95}, Euler--Poincar\'{e} Characteristics approximation (\cite{Adl81,TTA05,AdlTay07,TayAdl03}), the tube method (\cite{Sun93}), the Rice method (\cite{AW08,AW09}).
%General upper bound for the tail of $\max \xi(x)$ is developed in \cite%
%{Bor75,CIS}, which is known as the Borel--TIS lemma. For asymptotic results,
%there are several methods. The double sum method (\cite{Pit95}) requires an
%expansion of the covariance function around its global maximum and
%also locally stationary structure. The Euler--Poincar\'{e} Characteristics
%of the excursion set approximation (denoted by $\chi(A_{b})$, where $A_{b}$
%is the excursion set) uses the fact $P(\max \xi(x)>b)\approx E(\chi(A_{b}))$ and
%requires the random field to be at least twice differentiable (\cite%
%{Adl81,TTA05,AdlTay07,TayAdl03}). The tube method (\cite{Sun93}) uses the
%Karhunen-Lo\`{e}ve expansion and imposes differentiability assumptions on
%the covariance function (fast decaying eigenvalues) and regularity
%conditions on the random field. The Rice method (\cite{AW08,AW09})
%represents the distribution of $\max \xi (x)$ (density function) in an implicit form.
Recently, the exact tail approximation of integrals of exponential functions of Gaussian random fields is developed by \cite{Liu10,LiuXu11}. Efficient computations via importance sampling has been developed by \cite{ABL08,ABL09}. Recently, \cite{AST09} studied the geometric properties of high level excursion set for infinitely divisible non-Gaussian fields as well as the conditional distributions of such properties given the high excursion.

The analysis of high excursion of $|v'(x)|$ forms a challenging problem. Unlike the supremum norm and the integral of exponential functions, $\max_{x\in [0,L]} |v'(x)|$ as a function of $\xi (x)$ is neither sublinear nor convex and $v'(x)$ admits a much more complexed functional form than random variables studied in the existing literature.
%In addition, from the applied point of view, $v'(x)$ carries important physics meanings under various contexts.
%Therefore, several important classic techniques and results are not applicable.
In this paper, the analysis combines physics understanding, which helps with guessing  the most probable sample path of $\xi(x)$ given the high excursion of $|v'(x)|$, and  random field techniques  to derive approximations of $w(b)$. More technically, the development consists of analyzing the joint behavior of  $e^{\xi(x)}$ and  two integrals: $\int e^{\xi(x)}dx$ and $\int F(x) e^{\xi(x)}dx$. Approximations of the tail probabilities of $\max |v'(x)|$ are derived via the investigation of the joint extreme behaviors of these three quantities.
%Preliminary analysis of this problem under much more technically simplified situations, such as  $p(x) = \delta_{x_*}(x)$ (i.e.~$F(x) = I(x\leq x_*)$), is included in \cite{LiuZhou12}, compared to which the current paper is a substantial generalization.

	The main reason that we constrain the analysis to the one-dimensional equation  is that the solution to \eqref{eq} can be written as a closed form function of $\xi(x)$. For the high-dimensional case, a well known fact is that the specific form of the solution to \eqref{eqn:EllipticPDE}  typically cannot be written as an explicit form of $\xi(x)$, which makes the theoretical analysis almost intractable.
	In the PDE literature, a popular solution to  \eqref{eqn:EllipticPDE}  is through numerical recipes \cite{Ghanem-book1991,Hou-Luo-WCE-2006}.
%	One may conjecture to combine the PDE numerical methods and the large deviations analysis of stochastic processes. However, the bottleneck lies in that
The accuracy  of  numerical analysis is typically designed for regular analysis. Rare-event analysis requires that the errors of the numerical methods vanish as the rarity parameter $b$ tends to infinity.
	Further adding to the difficulty, the numerical methods typically do not yield analytic relationship between $\xi(x)$ and $\nabla v(x)$, which is a crucial requirement for almost every theoretical analysis.
	%The analysis of the high dimensional problem constitutes a very challenging problem that we do not pursue in this paper.
    Thus, in this paper, we focus on the one-dimensional analysis for which a closed form representation of $v(x)$ is available.
%    Nonetheless, as we will see in the technical development, to obtain a sharp asymptotic approximation even for the one dimensional case as in \eqref{tail}, the analysis is already very difficult.
    Nonetheless, the one-dimensional analysis forms a necessary standpoint of the high-dimensional analysis. The  results derived in this paper also provide intuition and guideline of more general analysis for \eqref{eqn:EllipticPDE}.

The rest of this paper is organized as follows.
Section \ref{SecApp} is dedicated to the applications and connection to other probability literatures.
In Section \ref{SecMain}, we present the main results. The theorems are proved in Sections \ref{SecInH}. A supplemental material is provided to include more technical proofs of the  propositions and lemmas  supporting the proof of the main theorems.

\section{Connections and Applications}\label{SecApp}

\paragraph{Connection to an exit problem of  stochastic differential equations.}
The elliptic PDE \eqref{eqn:EllipticPDE} is closely connected an exit problem of  stochastic differential equations (SDE) and has a number of physics applications.
The discussions in this section are under the general multidimensional setting.
We first present its connection to SDE.
Using the representation of $a(x)$ in \eqref{eqn: log-normal-def}, we rewrite  equation \eqref{eqn:EllipticPDE} as
$\nabla^2 v(x) - \nabla \xi^\top (x) \nabla v(x) = e^{\xi(x)} p(x)$ and $v(x) = 0$ for $x\in \partial \mathcal S$, where $\nabla^2$ is the Laplace operator.
Define an operator $\mathcal A: C^2(\mathcal S)\to C(\mathcal S)$ as follows
$$\mathcal A v(x) \triangleq \nabla^2 v(x) - \nabla \xi^\top (x) \nabla v(x)$$
that is the generator of a continuous time process $X(t)$  taking values in $\mathcal S$ solving the SDE $d X(t) = -\nabla \xi(X(t)) + \sqrt 2 dW(t)$
where $W(t)$ is the $d$-dimensional standard Wiener process.
The function $\xi(x)$ is known as the potential function or energy landscape of $X(t)$.
Define $\tau = \inf\{t: X(t) \in \partial \mathcal S\}$ as the exit time out of the domain $\mathcal S$.
According to Dynkin's formula, the solution to \eqref{eqn:EllipticPDE} has the following representation $v(x) = -E\{\int_0^\tau e^{\xi(X(t))} p(X(t))dt | X(0)=x\}.$
Thus, the solution $v(x)$ is the expected integral from time $0$ up to $\tau$ of the process $X(t)$ that admits a random potential function. The realizations of $\xi(x)$ are usually of irregular shapes, which is an important feature for practical modeling. SDE's with irregular or rugged landscapes (in the current context, modeled as realizations of process $\xi(x)$) are considered in applications in chemical physics and biology. An incomplete list of references includes \cite{ANS00,BOSW95,HT03,MGR09}. The large deviations study of such processes are studied by \cite{DuSp12}. Therefore,  the current study naturally connects to the study of SDE's, though the main techniques are those in the Gaussian process literature.

\paragraph{Physics Applications.}

Equation \eqref{eqn:EllipticPDE}  is notably known in many disciplines
to describe time-independent physics problems.
%The well-known Poisson equation or Laplace equation is a special case when $a(x)$ is a constant.
Under different contexts, the solution $v(x)$ and  the coefficient $a(x)$ have their specific physics meanings. We list several such applications that admit precisely equation \eqref{eqn:EllipticPDE} to describe their systems.

In material science, the PDE \eqref{eqn:EllipticPDE} is known as the generalized Hook's law.
Consider a piece of material whose physical location is described by $\mathcal S$ with elasticity coefficient $a(x)$. Let $p(x)$ be the external force applied to the material at location $x\in \mathcal S$. Then, the deformation of the material (due to external force) at $x$ is given by the solution  $v(x)$ to equation \eqref{eqn:EllipticPDE}.
The elasticity coefficient $a(x)$ is determined mostly by the physical composition of the material. Its randomness is interpreted as follows. Consider that multiple pieces of material are made by the same manufacturer. Due to system noise and other  sources of errors, those pieces cannot be completely identical. This variation is described by the randomness of $a(x)$. In other words,  $a(x)$ is a random sample from a population of materials whose distribution is governed by the manufacturer.
The gradient $\nabla v(x)$ is interpreted as the strain of the material that breaks if $|\nabla v(x)|$ exceeds a certain threshold. Thus $P(\max_{x\in \mathcal S}|\nabla v(x)|> b)$ is the breakdown probability of the material in presence of external force $p(x)$ and it is an important risk measure in engineering.
In the following discussions, we will often refer to the material displacement application for the physics interpretations and intuitions.

In the study of  electrostatics, a piece of insulator, the shape of which is given by $\mathcal S$, has electrical  resistance $a(x)$. Then, the potential (or voltage) $v(x)$ solves equation \eqref{eqn:EllipticPDE} and $\nabla v(x)$ is the electric field.
%$v$ is the potential (or voltage)  induced by electronic charges, $\nabla v(x)$ is the electric field,  and $a(x)$ is the permittivity (or resistance) of a conductor whose physical shape is described by $\mathcal S$.
This is known as the Gauss's law.
	The electrical resistance coefficient $a(x)$ may contain randomness based on a similar argument for the elasticity tensor.
	The high excursion of the electric field $\nabla v(x)$ induces insulation breakdown.
	Therefore, $P(\max_{x\in \mathcal S}|\nabla v(x)|> b)$ forms a risk measure  of the system.
	In groundwater hydraulics, the meaning of $v(x)$
is the hydraulic head (water level elevation), $a(x)$ is the hydraulic conductivity (or permeability), and $\nabla v$ indicates the water flow. This is known as the  Darcy's law.
	The elliptic PDE  \eqref{eqn:EllipticPDE} is also used to describe the steady-state distribution of heat where $v(x)$ carries the meaning of temperature at spatial location $x$ and the coefficient $a(x)$ is the heat conductivity of a thermal conductor whose physical location is given by $\mathcal S$. This is known as the Fourier's  law.

%	In the study of  electrostatics, $v$ is the potential (or voltage)  induced by electronic charges, $\nabla v(x)$ is the electric field,  and $a(x)$
%is the permittivity (or resistance) of a conductor whose physical shape is described by $\mathcal S$. This is known as the Gauss's law.
%	In groundwater hydraulics, the meaning of $v(x)$
%is the hydraulic head (water level elevation), $a(x)$ is the hydraulic conductivity (or permeability), and $\nabla v$ indicates the water flow. This is known as the  Darcy's law.
%	Lastly, in material science, this PDE is known as the generalized Hook's law. The function  $p(x)$ is the external force applied to a piece of material whose physical placement is described by the domain $\mathcal S$; the coefficient $a(x)$ is the elasticity coefficient; the solution $v(x)$ describes the displacement of the material from its original position at $x$ due to the external force $p(x)$; $\nabla v(x)$ is interpreted as the strain. In the following discussions, we will often refer to the material displacement application for the physics interpretations and intuitions.
%

%Another way of including randomness is through $p(x)$. As we will see in later discussions, $v(x)$ is a linear function of $p(x)$, which makes the analysis relatively easy.  We do not pursue along this line and assume that $p(x)$ is a deterministic function throughout the discussion.

\section{Main results}\label{SecMain}

\subsection{The theorems} \label{SecThm}

We consider the differential equation \eqref{eq} with the Dirichlet condition. The gradient of the solution is given by \eqref{eqn:Dirch-solution}.
The random coefficient $a(x)$ takes the form \eqref{eqn: log-normal-def}, where $\xi(x)$ is a Gaussian process living on $[0,L]$.
To derive closed form approximations of $w(b)$, we list a set of technical conditions  concerning the input process $\xi(x)$ and $p(x)$.
\begin{itemize}
\item[A1] The process $\xi(x)$ is strongly stationary and furthemore $E\xi (x)=0$ and $E \xi^2(x) =1$.

\item[A2] The process $\xi (x)$ is almost surely three-time differentiable.
The covariance function admits the following expansion
$Cov(\xi (0),\xi (x))=C(x)=1-\frac{\Delta }{2}x^{2}+\frac{A}{24}%
x^{4}-Bx^{6}+o(x^{6}),$ as $x\to 0.$

\item[A3] For each $x$, $C(\lambda x)$ is a non-increasing function of $\lambda \in \Real^{+}$.

\item[A4] The function $p(x)$ is at least twice continuously
differentiable. In addition, it falls into either of the two cases.

\begin{itemize}

\item [Case 1.]
$|p(x)|$ admits its unique interior global maximum $x_* = \arg\max |p(x)|$ and $x_*\in (0,L)$. Furthermore, $|p(x)|$ is strongly concave (meaning that the second derivative is strictly negative) in a sufficiently small neighborhood around $x_*$.
%and $p(x)$ admits one unique maximum on $[0,T]$. In addition, $p(0)$, $p(L)>0$.
\item [Case 2.]
$p(x)$ is constant.

\end{itemize}
\end{itemize}

%\begin{remark}
%	Assumption A1 assumes that $\xi(x)$ is a standardized Gaussian process. The noise level $\sigma$ given as in \eqref{eqn: log-normal-def} is treated as an additional parameter.
	Assumption A2 is an important assumption for the entire analysis. In particular, three-time differentiability implies that the covariance function is at least six-time differentiable and the first, the third, and the fifth derivatives evaluated at the origin are all zero.
	The coefficients $\Delta$ and $A$ are known as the spectral moments that will be further discussed in the later analysis.
	Assumption A3 is imposed for technical purpose and it requires that $C(x)$ is decreasing on $\Real^+$ and increasing on $\Real^-$. Assumption A4 requires the uniqueness of the global maximum of $|p(x)|$. In case when $|p(x)|$ has more than one (interior) global maximum or the global maximum is at the boundary, the analysis can be adapted easily. This will be discussed in later remarks after the presentation of the asymptotic approximations.
	%The strong concavity refers to the fact that $|p(x)|''$ is strictly negative in a small neighborhood of $x_*$.
%\end{remark}

Throughout our discussion we use the following notations for the asymptotic
behaviors. We say that $0\leq g(b)=O(h(b))$ if $g(b)\leq ch(b)$ for some
constant $c\in (0,\infty )$ and all $b\geq b_{0}>0$. Similarly, $g(b)=\Omega
(h(b))$ if $g(b)\geq ch(b)$ for all $b\geq b_{0}>0$. We also write $g(b)=\Theta (h(b))$ if $g(b)=O(h(b))$ and $g(b)=\Omega (h(b))$; $g(b)=o(h(b))$ as $b\rightarrow \infty $ if $g(b)/h(b)\rightarrow 0$ as $b\rightarrow \infty $; finally, $g(b) \sim h(b)$ if $g(b)/h(b) \to 1 $ as $b \to \infty$.

%\begin{remark}
%Depending on  the boundary conditions, the solution $v(x)$ takes different forms.
%For the following mixing boundary condition with a load $P$ at the right end $x=L$
%and zero displacement at the left end $x=0$, that is,
%\[ v(0)=0, \qquad a(L)v'(L)=P, \]
%the solution and its derivative are
% \begin{align}
% \label{eqn:Neun-solution}
%  v(x) =\int_0^x \frac{-F(L)+F(y)+P}{a(y)}dy,  \
%  \mbox{ and }\
%v'(x) &= \frac{-F(L)+F(x)+P}{a(x)}
%\end{align}
%With $a(x)$ defined as in \eqref{eqn: log-normal-def},
%$ v'(x) $ is also log-normal with a spatially varying mean function (depending on $F(x)$ and $P$).
%The asymptotic behavior of $w(b)$ for  the above mixing boundary condition
%is  equivalent to that of
%the supremum of a Gaussian process living on a compact set.
%This has been studied intensively in the literature (e.g. \cite{ABL09,AdlTay07} and references mentioned earlier).
%%For completeness, we cite some classic results.
%%If $\xi(x)$ is a  smooth Gaussian process with piecewise twice differentiable mean $\e(\xi(x))= \mu(x)$ and covariance
%%$\e(\xi(x)\xi(y))=C(x-y)$. Then we have the following asymptotics
%%\begin{equation}
%% \p( \max_{[0,T]} \xi(x) > b)  \sim C_{*} \times \int_{0}^{L} e^{-\frac{(b-\mu(x))^{2}}{C(0)}}dt,
%% %\sim C' b^{-1/2} e^{-\frac{(b-\max_{L}\mu(x))^{2}}{C(0)}}.
%%\end{equation}
%%for some constant $C_{*}$ depending on the spectral moments of $\xi$ and the curvature of $\mu(x)$.
%%See the text \cite{CRLE67} and the references therein for more details.
%\end{remark}

%
We present the asymptotic approximations of $w(b)$ under the two cases  in Assumption A4 respectively.
	We first consider the situation when $|p(x)|$ admits one unique maximum.
Let $x_{\ast}\triangleq \arg \max_{x\in [0,L]} |p(x)|$ be the unique interior maximum in $(0,L)$.
	Without loss of generality, we assume that $p(x_*)$, $p(0)$, and $p(L)$ are all positive. For the case that some or all of them are negative, the analysis is completely analogous. This will be mentioned in later remarks.

	The statement of the theorem needs the following notation.
We define three variables $u$, $u_0$ and $u_L$  that depend on the excursion level $b$. They are all approximately on the scale of $\frac{\log b}\sigma$.
For each $b>0$, let $u$ be the solution to%
\begin{equation}
p(x_{\ast})H(\gamma _{\ast }(u),u)e^{\sigma u}=b,  \label{form}
\end{equation}%
where   %for $x>0$ we define
\begin{equation}\label{HDef}
H(x,u)\triangleq |x|e^{-\frac12 {\Delta \sigma u}x^{2}}
\end{equation}
and $\gamma _{\ast }(u) \triangleq\arg\sup_{x>0} H(x,u)=u^{-1/2}\Delta ^{-1/2}\sigma ^{-1/2}$.
The identity \eqref{form} can be simplified to
\begin{equation}\label{form2}
\frac{p(x_{\ast })}{\sqrt{\sigma \Delta u}}e^{\sigma u-\frac{1}{2}}=b.
\end{equation}
	We introduce the notation  $\gamma _{\ast } (u)$ and  $H$
because they arise naturally in the derivation and have  geometric and probabilistic
interpretations that will be given in the proof of our main theorems.

For each $b>0$, let $u_{0}$ be the  solution to
\begin{equation}\label{u0}
\frac{e^{\sigma u_{0}}}{%
\sqrt{\Delta \sigma u_{0}}} \times \sup_{\{(x,\zeta ):x\leq \zeta \}}H_{0}(x,\zeta ; u_{0} )  =b.
\end{equation}
where% for $y=0$ or $L$,
\begin{equation}\label{H0}
H_{0}(x,\zeta; u)\triangleq e^{-\frac{x^{2}}{2}} \times
E\Big[p(0)(x-Z)+\frac{p^{\prime}(0)}{2\sqrt{\Delta \sigma u}}(x-Z)^{2}   ~\Big | ~
Z\leq \zeta \Big].
\end{equation}
$Z$ is a standard Gaussian random variable
independent of any other random variables in the system; $E(\cdot |Z\leq \zeta )$ denotes the conditional expectation with respect to $Z$ given $Z\leq \zeta $.
	We provide further explanations of $H_0$.
	The second term inside the expectation  \eqref{H0} is $o(1)$. If we ignore it for the time being, then $H_0(x,\zeta;u)\approx p(0)e^{-x^2/2}[x- E(Z|Z\leq \zeta)].$
%The term $E(Z|Z\leq \zeta)$ is due to the boundary effect and it vanishes as $\zeta \rightarrow \infty$.
%The factor $\zeta$ becoming large corresponds to that the major integration region moves away from the boundary.
%This will be discussed heuristically at the end of Section \ref{SecHeu}.
The last term in the definition of $H_0$ is important to obtain a sharp approximation of the tail probabilities.
	More properties of $H_0$ are included in Remark \ref{RemH} after the presentation of the theorem.

Similarly, we define $u_L$ by
\begin{equation}\label{ul}
\frac{e^{\sigma u_{L}}}{\sqrt{\Delta \sigma u_{L}}} \times \sup_{\{(x,\zeta ):x\leq \zeta \}}H_{L}(x,\zeta ; u_{L} )  =b.
\end{equation}
where
\begin{equation}\label{hl}
H_{L}(x,\zeta; u)\triangleq e^{-\frac{x^{2}}{2}} \times
E\Big[p(L)(x-Z)- \frac{p^{\prime}(L)}{2\sqrt{\Delta \sigma u}}(x-Z)^{2}   ~\Big | ~
Z\leq \zeta \Big].
\end{equation}
Note that the signs for the $p'$ terms in the definitions of $H_0$ and $H_L$ are different.

\begin{remark}
We now provide a remark on $u$, $u_0$, and $u_L$.
Note that $F(x)=\int_0^x p(t)dt$ is a bounded function and furthermore the factor,
%\begin{equation}
$F(x) - {\int_0^L F(t)e^{\xi(t)} dt}/{\int_0^L e^{\xi(t)} dt}$,
%\end{equation}
is also bounded. In fact, this factor  converges to zero under the conditional distribution given the high excursion of $|v'(x)|$. Thus, if $|v'(x)|$ exhibits a high excursion, then $\xi(x)$ must also achieve a high level. The variable $u$ is interpreted as the level which $\xi(x)$ needs to achieve so that $|v'(x)|$  achieves the level $b$ around $x_*$.
The choice of $u$ also takes into account of this factor  that eventually vanishes.
A heuristic calculation of  $u$ will be provided in Section \ref{SecHeu}.
Similarly, $u_0$ and $u_L$ correspond to the high excursion levels of $\xi(x)$ at the two ends.
\end{remark}

We next introduce a number of constants/variables defined through the functions  $H_{0}$ and $H_L$.
They appear in the statement of the theorem.
For each $\zeta$, $u_0$, and $u_L$, maximizing $\log(|H_{0}|)$ and $\log (|H_L|)$ over $x\in (-\infty, \zeta]$
%\xz{no definition for very negative $x$ since $H_0<0$}
gives the definitions of the following functions %$G_0$ and $G_L$,
\[ G_{0}(\zeta; u_0 )\triangleq \sup_{x\leq \zeta } \log |H_{0}(x,\zeta ;u_0)|, \quad G_{L}(\zeta; u_L )\triangleq \sup_{x\leq \zeta }\log |H_{L}(x,\zeta ;u_L)|.\]
 Define the maximizers of the $G$-function
\begin{eqnarray*}
\zeta _{0} &\triangleq &\arg \max_{\zeta }G_{0}(\zeta;u_0),\quad
\zeta _{L}\triangleq \arg \max_{\zeta } G_{L}(\zeta; u_L).
\end{eqnarray*}
Note that $\zeta_0$ depends on $u_0$ and $\zeta_L$ depends on $u_L$. To simplify the notation, we omit the indices $u_0$ and $u_L$ in the notation $\zeta_0$ and $\zeta_L$ when there is no ambiguity.
The second derivative of the $G$-functions evaluated at their maximizers are
\begin{eqnarray*}
\Xi _{0} &\triangleq &-\lim_{u_{0}\rightarrow \infty }\partial_\zeta^2 G_{0}|_{\zeta=\zeta_0,u=u_0},%(\zeta _{0};u_0),
\quad \Xi _{L}\triangleq -\lim_{u_{L}\rightarrow \infty}\partial_\zeta^2G_{L}|_{\zeta=\zeta_L, u=u_L}.
\end{eqnarray*}
Lastly, we define two constants
\begin{equation}
\label{kappa}
\begin{split}
%(\zeta _{L};u_L) \\
\kappa _{0} \triangleq &\frac{A\zeta _{0}}{24\Delta ^{2}\sigma }-\frac{%
A\times E\left( Z^{4}|Z\leq \zeta _{0}\right) }{24\Delta ^{2}\sigma }+\frac{E[%
\frac{p^{\prime \prime }(0)}{6\sigma \Delta }(\zeta _{0}-Z)^{3}+\frac{Ap(0)}{%
24\Delta ^{2}\sigma ^{2}}Z^{4}(\zeta _{0}-Z)\left\vert Z\leq \zeta
_{0}\right. ]}{p(0)E(\zeta _{0}-Z\left\vert Z\leq \zeta _{0}\right. )},
 \\
\kappa _{L} \triangleq &\frac{A\zeta _{L}}{24\Delta ^{2}\sigma }-\frac{%
A\times E\left( Z^{4}|Z\leq \zeta _{L}\right) }{24\Delta ^{2}\sigma }+\frac{E[%
\frac{p^{\prime \prime }(L)}{6\sigma \Delta }(\zeta _{L}-Z)^{3}+\frac{Ap(L)}{%
24\Delta ^{2}\sigma ^{2}}Z^{4}(\zeta _{L}-Z)\left\vert Z\leq \zeta
_{L}\right. ]}{p(L)E(\zeta _{L}-Z\left\vert Z\leq \zeta _{L}\right. )},
\end{split}
\end{equation}%
where $Z$ is a standard Gaussian random variable and the constants  $A$ and  $\Delta$ are defined as in the expansion in Assumption A2.
The main results are summarized in the following theorems.

\begin{theorem}
\label{ThmMain} Suppose that $\xi (x)$ is a Gaussian process satisfying
conditions A1 - A3 and case 1 of A4. For all $x\in \lbrack 0,L]$, let $v'(x)$ be given as in \eqref{eqn:Dirch-solution}.
Let $u$, $u_0$, and $u_L$ be defined as in \eqref{form2}, \eqref{u0},  and \eqref{ul}.
If $p(x)$ is nonnegative at  $x = 0$, $x_*$, and $L$, then
\[
P(\sup_{x\in \lbrack 0,L]}|v^{\prime }(x)|>b)\sim
D\times u^{-1/2}e^{-u^{2}/2}+D_{0}\times u_{0}^{-1}e^{-u_{0}^{2}/2}+D_{L}\times u_{L}^{-1}e^{-u_{L}^{2}/2}
\]%
where $D$, $D_0$, and $D_L$ are constants defined as
\begin{eqnarray*}
D &=&\frac{\sqrt\Delta e^{\frac{A}{24\sigma
^{2}\Delta ^{2}}+\frac{p^{\prime \prime }(x_{\ast})}{6p(x_{\ast})\sigma
^{2}\Delta } }}{(2\pi )^{3/2}\sqrt{A-\Delta ^{2}}} 
\times
%\times\exp\left({\frac{A}{24\sigma^{2}\Delta ^{2}}-\frac{p^{\prime \prime }(x_{\ast})}{6p(x_{\ast})\sigma^{2}\Delta } }\right )
\\
&&~~ \int \exp \left \{-\frac{1}{2}
\left[\frac{\Delta ^{2}z^{2}}{A-\Delta ^{2}}-%
\frac{z}{\sigma }-\frac{y^{2}z}{\Delta }+\frac{A}{4\Delta^4 }y^{4}+y^{2}
\left(%
\frac{A}{2\sigma \Delta ^{3}}-\frac{p^{\prime \prime }(x_{\ast})}{p(x_{\ast
})\sigma \Delta ^{2}}
\right)\right]
\right\}dydz. \\
D_{0} &=&\frac{\sqrt\Delta e^{\kappa _{0}/\sigma }}{(2\pi )^{3/2}\sqrt{A-\Delta
^{2}}}\times \int \exp \Big\{-\frac{1}{2}\Big[\frac{\Delta ^{2}z^{2}}{A-\Delta ^{2}}
-\frac z \sigma
%-\frac{z}{\sigma }(\zeta _{0}+1-p(0)\zeta _{0}P\left( Z\leq \zeta _{0}\right)
+\frac{\Xi _{0}}{\Delta }y^{2}\Big]\Big\}dydz \\
D_{L} &=&\frac{\sqrt\Delta e^{\kappa _{L}/\sigma }}{(2\pi )^{3/2}\sqrt{A-\Delta
^{2}}}\times \int \exp \Big\{-\frac{1}{2}\Big[\frac{\Delta ^{2}z^{2}}{A-\Delta ^{2}}
-\frac z \sigma
%-\frac{z}{\sigma }(\zeta _{L}+1-p(L)\zeta _{L}P\left( Z\leq \zeta _{L}\right)
+\frac{\Xi _{L}}{\Delta }y^{2}\Big]\Big\}dydz.
\end{eqnarray*}
\end{theorem}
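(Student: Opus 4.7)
The plan is to decompose the excursion event $\{\sup_{x\in[0,L]}|v'(x)|>b\}$ according to where the maximum of $|v'(x)|$ is attained, then analyze each region by a change-of-measure argument that tilts $\xi$ to a high level at the relevant location. Since $v'(x) = e^{\sigma\xi(x)}(F(x)-\eta)$ with $\eta = \int_0^L F e^{\sigma\xi}/\int_0^L e^{\sigma\xi}$ uniformly bounded, achieving $|v'(x)| > b$ forces $\xi(x)$ to attain level $u\sim \sigma^{-1}\log b$. The cheapest (smallest $u$) way to achieve this is for $\xi$ to peak where the amplitude $|F-\eta|$ can be largest: near $x_\ast$ in the interior, or at $0$ or $L$ on the boundary. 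First I would establish this spatial localization: the regions $I_\ast=[x_\ast-\delta,x_\ast+\delta]$, $I_0=[0,\delta]$, $I_L=[L-\delta,L]$ dominate, while every other point requires a strictly larger $u$ and contributes in higher order. Asymptotic disjointness of the three regional events follows because $\xi$ being high in two disjoint regions costs an extra $e^{-\Omega(u^2)}$ in probability.

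For the interior contribution, I would condition on $\xi(x_\ast)=u$ and use the decomposition $\xi(x_\ast+s) = u\,C(s) + R(s)$ with $R$ independent of $\xi(x_\ast)$. To leading order $\xi(x_\ast+s)\approx u(1-\Delta s^2/2)$, so the Laplace method on the two integrals gives
\[
\int_0^L e^{\sigma\xi}\,dt \sim e^{\sigma u}\sqrt{2\pi/(\sigma u \Delta)}, \qquad \int_0^L F e^{\sigma\xi}\,dt \sim F(x_\ast)\, e^{\sigma u}\sqrt{2\pi/(\sigma u \Delta)},
\]
so that $\eta\to F(x_\ast)$; after rescaling $s = y/\sqrt{\sigma u \Delta}$ one finds $|v'(x_\ast+s)|\approx e^{\sigma u}|p(x_\ast)y|e^{-y^2/2}/\sqrt{\sigma u \Delta}$, which upon maximization in $y$ recovers $\gamma_\ast(u)$ and defines $u$ via \eqref{form}. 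The tilted conditional expectation then reduces to a finite-dimensional Gaussian integral over the residuals $\xi(x_\ast)-u$ and $\xi''(x_\ast)+u\Delta$, whose conditional covariance is $A-\Delta^2$. Fourth-order expansion of $C$ (introducing $A$) together with second-order expansion of $F$ (introducing $p''(x_\ast)$) then produces the claimed constant $D$ against the Gaussian tail factor $u^{-1/2}e^{-u^2/2}$.

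For the boundary contributions the outline is similar but two features change. The local analysis is one-sided, which alters the Laplace constants and produces a prefactor $u_0^{-1}e^{-u_0^2/2}$ (one fewer quadratic direction compared with the interior). Moreover $F(0)=0$ while $p(0)\neq 0$, so $\eta$ does not vanish: its leading behavior is of order $u^{-1/2}$ and couples to the local slope of $\xi$ at $0$ through the conditional law of a normalized derivative $Z\sim N(0,1)$ given that $\xi$ is non-increasing near $0$. This manifests as the conditional expectation $E[\,\cdot\mid Z\leq \zeta]$ inside $H_0$, and the joint supremum over $(x,\zeta)$ in \eqref{u0} optimizes simultaneously over the boundary value of $\xi$ and the imposed monotonicity level. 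The second derivative $\Xi_0$ of $G_0$ at its maximizer and the fourth-order correction $\kappa_0$ (arising from $A$ and $p''(0)$) then emerge from the standard Laplace expansion of the tilted integral, giving $D_0$. The endpoint $L$ is completely analogous, with a sign change in the $p'$ term.

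The main obstacle is controlling the ratio $\eta$ uniformly under the tilted measure. Unlike the tail analysis for $\int e^{\sigma\xi}$ developed in \cite{Liu10,LiuXu11}, $\eta$ is nonlocal and nonconvex; yet one needs $\eta$ to concentrate near $F(t_0)$ (where $t_0$ is the peak location) to read off the correct amplitude of $|v'|$. I would handle this by a two-scale argument, showing that under the tilted law both integrands concentrate on a window of size $O(1/\sqrt{\sigma u\Delta})$ around $t_0$, so $\eta = F(t_0)+O(u^{-1/2})$ with the subleading term absorbed into the quadratic correction inside $H$, $H_0$, $H_L$; the contribution from the complement is controlled via assumption A3 and Borell--TIS. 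The remaining work is combinatorial: tracking three simultaneous expansions---of $C$ to order $x^4$ (giving $A$), of $F$ to order $s^2$ (giving $p''$), and of $\eta$ to order $u^{-1/2}$ (giving the conditional expectations)---and verifying that cross-terms consolidate into the stated $D$, $D_0$, $D_L$. Finally, a Pickands-type double-sum bound over a fine partition, using stationarity and the monotonicity A3 of $C$, will show that the middle region $[0,L]\setminus (I_0\cup I_\ast\cup I_L)$ contributes only at strictly higher order.
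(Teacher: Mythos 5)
Your outline matches the paper's proof in all essentials: the three-region spatial decomposition with asymptotic disjointness handled by inclusion--exclusion, conditioning on the field and its first two derivatives at the peak, Laplace expansion of the two integrals, a one-sided/truncated Laplace analysis at the boundary, localization via Borell--TIS and a change-of-measure, and assembling the constants from fourth-order expansion of $C$ and second-order expansion of $F$. The one point you should tighten before filling in details is the identity of the random variable $Z$ appearing in $H_0$ and $H_L$. It is not a ``normalized derivative of $\xi$ at $0$ given $\xi$ non-increasing,'' and the event $\{Z\le\zeta\}$ is not a monotonicity constraint on $\xi$. Rather, after the Laplace step the weight $e^{\sigma\xi(t)}/\int_0^L e^{\sigma\xi(s)}\,ds$ is approximately a Gaussian density in $t$ centered at the shifted peak $t_L + y/(\Delta(u_L-z))$ with variance $(\Delta\sigma u_L)^{-1}$; $Z$ is the corresponding standardized random location, and $\{Z\le\zeta\}$ encodes the truncation of that Gaussian measure by the domain endpoint $t\le L$ (equivalently the constraint \eqref{boundary}). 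The dependence of the threshold $\zeta$ on $y$ and $z$ is how the slope enters, but only through where the peak sits relative to $L$. Similarly, the $u^{-1}$ versus $u^{-1/2}$ prefactor is not really a count of ``quadratic directions'' but reflects that in the interior the argmax of $|v'|$ is free to drift with $\xi'(x_*)=y$ over an $O(u^{1/2})$ range (hence an extra $u^{1/2}$ from the substitution $y_u=u^{-1/2}y$), whereas at the boundary the argmax is pinned at $x=L$ so no such gain occurs. With those two corrections your outline is a faithful condensation of the paper's argument.
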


\begin{remark}
If $p(x)$ attains its maximum at multiple interior
points $x_1,\cdots, x_k$, then the approximation becomes
$P(\sup_{x\in \lbrack 0,L]}|v^{\prime }(x)|>b)\sim
\sum_{j=1}^{k}D(j)u^{-1/2}e^{-u^{2}/2}+D_{0}u_{0}^{-1}e^{-u_{0}^{2}/2}+D_{L}u_{L}^{-1}e^{-u_{L}^{2}/2},$
where $D(j)$'s are defined similarly as $D$ by replacing $x_\ast$ with $x_k$.
If the maximizer  $x_*$ is attained on the boundary, i.e.~$x_*=0$ or $L$, then the term $Du^{-1/2}e^{-u^{2}/2}$ should be
removed from the approximation.
\end{remark}

%\begin{remark}
%Each of  the three equations \eqref{form2}, \eqref{u0},  and \eqref{ul} typically has two solutions. One is on the order of $\frac{\log b}{\sigma}$ and the other one is close to zero. We choose $u$, $u_0$, and $u_L$ to be the larger solutions. Numerically, the computations of $u$, $u_0$, and $u_L$ only consist of solving  one-dimensional equations, the numerical recipe of which is very well developed.
%
%
%%In addition, $u$, $u_0$, $u_L$ are implicitly defined  by those three equations.
%From the analytic point of view, in order to obtain approximations of $w(b)$, we can solve $u$, $u_0$, and $u_L$ asymptotically (as $b$ tends to infinity) by repeatedly using Taylor expansion. For instance, an asymptotic solution to \eqref{form2} is given by
%$\hat u = \frac{u^*}{\sigma} + \frac{\log u^*}{2\sigma} -\frac{\log \sigma -1 }{2\sigma} + \frac{\log u^* +1- \log \sigma }{4\sigma u^*}$
%where $u^* = \log b - \log p(x_*) +\frac{1}{2} \log (\Delta\sigma)$. It is not difficult to  show that $u- \hat u = o(u^{-1})$ that  is small enough such that we can place ``$u$'' by ``$\hat u$''  in Theorem \ref{ThmMain} while still maintaining the results.
%Similarly, we can obtain asymptotic solutions to replace $u_0$ and $u_L$.
%\end{remark}

\begin{figure}[t]
\begin{center}
\includegraphics[scale=0.2]{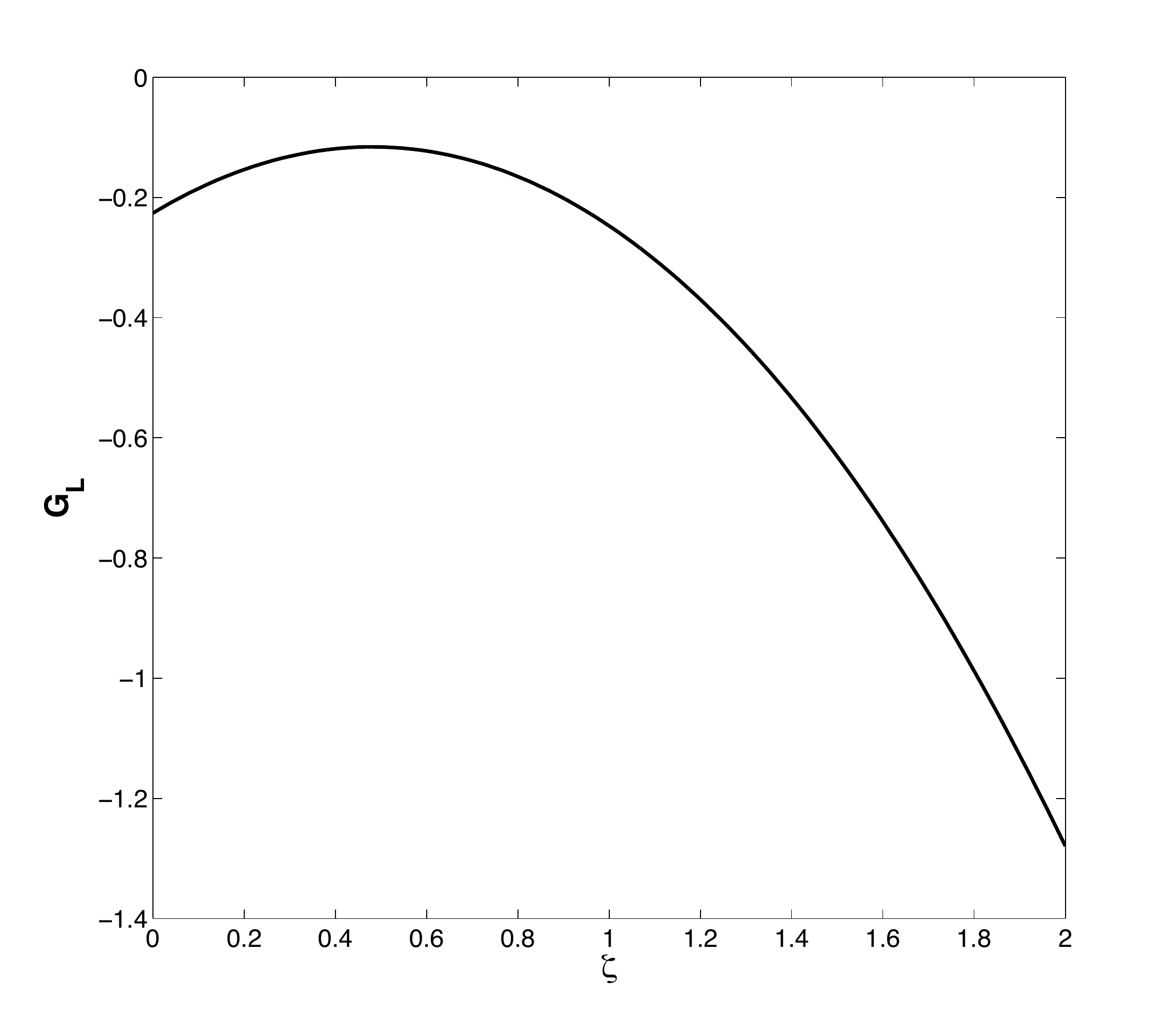}
\end{center}
\caption{Function $G_L(\zeta,u_L=\infty)$. }\label{G}
\end{figure}

\begin{remark}
\label{RemH}There are several features of the functions $H_0$ and $H_L$ that are
important in the analysis.
As $u_{L}\to \infty$, we  have that
$$H_L(x,\zeta;u_L)\to p(L)e^{-x^2/2}[x- E(Z|Z\leq \zeta)]>0 \quad \mbox{for all $x>0$}$$
and  $\zeta_{L}\approx 0.48$.
%In addition, for all $\zeta >0$, $x(\zeta )=\arg\max_{x}H_{L}(x,\zeta )\geq 1.16$.
In addition, for $\zeta \leq 0.84$, we have %$x(\zeta )=\zeta $ and
$ \frac{\partial |H_L|}{\partial x}\vert _{(x,\zeta )=(\zeta ,\zeta)}>0,$
and thus $\max_{x\in (-\infty, \zeta]}\log |H_{L}(x,\zeta )|$ is solved at $x=\zeta $, that is, $G_L(\zeta; u_L) = \log |H_{L}(\zeta,\zeta;u_L )|.$
This calculation is important in the  technical derivations and it ensures that the maximum of $|v'(x)|$ is attained precisely at $x=L$ if $\max_{L-\varepsilon < x \leq L} |v'(x)| > b$.
To assist understanding, we numerically computed the function $G_L$ for $\zeta >0$ by setting $u_L = \infty$ and plot it in Figure \ref{G} for $p(L)=1$.
\end{remark}

\begin{remark}
The theorem assumes that $p(x)$ is positive at the important locations.
In the case when $p(x_*)<0$, we simply define $u$ through
$|p(x_{\ast})|e^{\sigma u+H(\gamma _{\ast }(u),u)}= b.$
The  definitions of other variables remain. Similarly, if $p(0)$ is negative we should generally define that
$H_{0}(x,\zeta; u)\triangleq \mbox{sign}(p(0)) e^{-\frac{x^{2}}{2}} \times
E\Big[p(0)(x-Z)+ \frac{p'(0)}{2\sqrt{\Delta \sigma u}}(x-Z)^{2}   ~ \Big| ~
Z\leq \zeta \Big],$
where ``sign" is the sign function. The same treatment can be applied to $H_L$ when $p(L)$ is negative.
%In addition, the terms ``$p(x)$'' and ``$p''(x)$'' in the definition of $D$, $\kappa_0$, and $\kappa_L$ are replaced by ``$-p(x)$'' and ``$-p''(x)$''. Basically, we just plug $-p(x)$ to the formulas.
The rest of the definitions remains.
To simplify the notation, we assume that $p(0)$ and $p(L)$ are positive and do not include the sign term.
\end{remark}

%
%\xz{ Your value of 0.48 is correct. I do not understand your argument after 'in addition'.
%In particular about $x(\zeta)$ and 1.16. Here is my statement:
%
%In the limit $u_L\to +\infty$, we have the following facts.
%If $\zeta<0.84$, then
% $\max_{x\in (-\infty, \zeta]}H_{L}(x,\zeta )$ is solved  at $x=\zeta $,
% and $\left.\frac{\partial H_L}{\partial x}\right\vert _{(x,\zeta )=(\zeta ,\zeta
%)}>0$, which implies that
% $G_L(\zeta)=\log(H_L(\zeta,\zeta))$ for $\zeta<0.84$.
% If $\zeta>0.84$, then
%$\max_{x\in (-\infty, \zeta]}H_{L}(x,\zeta )$ is solved at   the same point to the case $\max_{x\in\Real}H_{L}(x,\zeta )$, which is
%$x(\zeta):=\frac{E(Z|Z<\zeta)}{2}+\sqrt{\left(  \frac{E(Z|Z<\zeta)}{2}\right)^2+1}$.
%Thus, $G_L(\zeta)=\log H_L(x(\zeta),\zeta)$ for $\zeta>0.84$.
%Thus,  the maximizer of $G_L$ is numerically identified as $\zeta_L\approx 0.48$.
%}

Now, we proceed to the approximation of $w(b)$ when $p(x)\equiv p_0>0$.
The approximation is very similar to Theorem \ref{ThmMain}, except that we do not have the term  $D\times u^{-1/2}e^{-u^{2}/2}$ and all the derivatives of $p(x)$ vanish.
To state the theorem, we need the following notation. We define a similar $H$-function and $G$-function as
$H_{h}(x,\zeta ) = p_{0}e^{-\frac{x^{2}}{2}}E(x-Z|Z\leq \zeta )$, and $G_h(\zeta) = \sup_{x\leq \zeta} \log |H_h(x,\zeta)|$
where the subscript ``$h$'' stands for this  constant force case $p(x)\equiv p_0$.
Furthermore, we define constants $\zeta _{h} =\arg \sup_{\zeta }G_{h}(\zeta ),$ $\Xi _{h} =-\partial_{\zeta}^2 G_{h}(\zeta _{h})$,
\begin{eqnarray*}
D_{h} &=&\frac{\Delta e^{\kappa _{h}/\sigma }}{(2\pi )^{3/2}\sqrt{A-\Delta
^{2}}}\times \int \exp \Big\{-\frac{1}{2}\Big[\frac{\Delta ^{2}z^{2}}{A-\Delta ^{2}}
-\frac z \sigma
%-\frac{z}{\sigma }(\zeta _{h}^{2}+1-p_{0}\zeta _{h}P\left( Z\leq \zeta_{h}\right)
+\frac{\Xi _{h}}{\Delta }y^{2}\Big]\Big\}dydz \\
\kappa _{h} &=&\frac{A\zeta _{h}^{4}}{24\Delta ^{2}\sigma }-\frac{AE\left(
Z^{4}|Z\leq \zeta _{h}\right) )}{24\Delta ^{2}\sigma }+\frac{AE\{Z^{4}(\zeta _{h}-Z)\left\vert Z\leq \zeta _{h}\right. \}}{{24\Delta
^{2}\sigma ^{2}}E(\zeta _{h}-Z\left\vert Z\leq \zeta _{h}\right. )}.
\end{eqnarray*}

\begin{theorem}
\label{ThmHomo} Suppose that the random field $\xi (x)$ satisfies the
Conditions A1-A3 and case 2 of A4. In addition, the external force $%
p(x)\equiv p_{0}$ is a positive constant.
For each $b>0$, let $u_{h}$ solve%
\[
%\sup_{\{(x,\zeta ):x\leq \zeta \}}H_{h}(x,\zeta )
\frac{e^{\sigma u_{h}}}{\sqrt{\Delta \sigma u_{h}}}\times \sup_{\{(x,\zeta): x\leq \zeta\}} H_h(x,\zeta)=b.
\]%
Then, we have the closed form approximation $P(\sup_{x\in \lbrack 0,L]}|v^{\prime }(x)|>b)\sim 2D_{h}e^{-u_{h}^{2}/2}.$

\end{theorem}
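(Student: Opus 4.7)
The plan is to mirror the boundary analysis that produces $D_0$ and $D_L$ in Theorem \ref{ThmMain}, exploiting the substantial simplifications offered by the constant-force case. First, setting $F(x)=p_0 x$ in \eqref{eqn:Dirch-solution} gives
\begin{equation*}
v'(x)=p_0\, e^{\sigma\xi(x)}\,(x-\bar T),\qquad \bar T:=\frac{\int_0^L t\,e^{\sigma\xi(t)}dt}{\int_0^L e^{\sigma\xi(t)}dt}\in[0,L].
\end{equation*}
The deterministic multiplier $|p_0(x-\bar T)|$ is convex in $x$ and vanishes at $x=\bar T$, so over $[0,L]$ it is maximized at whichever endpoint is farther from $\bar T$. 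Nothing in the bulk factor encourages an interior excursion, and this is the structural reason why the interior term $D u^{-1/2}e^{-u^2/2}$ of Theorem \ref{ThmMain} is absent here.

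Next, for a small fixed $\varepsilon>0$, I would decompose
\begin{equation*}
\{\max_{[0,L]}|v'|>b\}=E_0\cup E_L\cup E_{\mathrm{mid}},
\end{equation*}
with $E_0=\{\sup_{[0,\varepsilon]}|v'|>b\}$, $E_L=\{\sup_{[L-\varepsilon,L]}|v'|>b\}$, and $E_{\mathrm{mid}}$ the corresponding interior event. The key qualitative step is $P(E_{\mathrm{mid}})=o(e^{-u_h^2/2})$: on the high-probability event where $\bar T$ concentrates near its typical location, $|x-\bar T|$ is strictly smaller for $x\in[\varepsilon,L-\varepsilon]$ than at whichever endpoint dominates, so reaching level $b$ in the interior forces $\xi$ to exceed a level strictly larger than $u_h$; a Borell--TIS / Pickands double-sum-type upper bound, combined with a crude lower bound on $\int_0^L e^{\sigma\xi(t)}dt$, supplies the required gain.

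Once reduced to the endpoints, the reparametrization $x\mapsto L-x$ sends $\bar T$ to $L-\bar T$ and, by the stationarity in A1, preserves the law of $\xi$; hence $v'|_{[0,\varepsilon]}$ and $-v'(L-\cdot)|_{[0,\varepsilon]}$ are equal in distribution, which identifies $P(E_0)=P(E_L)$ and produces the overall factor of $2$. The remaining computation of $P(E_0)$ proceeds exactly as the derivation of the term $D_0 u_0^{-1}e^{-u_0^2/2}$ in Theorem \ref{ThmMain}, but with $p(0)=p_0$ and $p'(0)=p''(0)=0$. Concretely, I would Taylor-expand $\xi$ around $0$, condition on the leading jet $(\xi(0),\xi'(0),\xi''(0))$, change variables via $\xi(0)=u_h+z/(\sigma u_h)$ and a rescaled slope $y$, and recognize the resulting conditional integral as the defining expression of $H_h(x,\zeta)$; a Laplace expansion near the maximizer $\zeta_h$ of $G_h$, together with integration over the residual Gaussian directions $(y,z)$, then yields $D_h e^{-u_h^2/2}$.

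The main obstacle is the interior negligibility step: one must rule out both a tall interior spike of $\xi$ that overpowers the suppressing factor $|x-\bar T|$, and an atypically small denominator $\int_0^L e^{\sigma\xi(t)}dt$ that biases $\bar T$ toward an endpoint and thereby enlarges the interior value of $|x-\bar T|$. Both difficulties require joint tail control of $\xi(x)$ at an arbitrary point together with the two global functionals $\int e^{\sigma\xi}$ and $\int t\, e^{\sigma\xi}$; fortunately the corresponding estimates have already been developed for Theorem \ref{ThmMain} and can be reused after the simplification $p'\equiv 0$ eliminates the subleading Taylor corrections that make $H_0$ and $H_L$ more delicate than $H_h$.
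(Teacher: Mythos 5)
Your outline tracks the paper's strategy in broad strokes --- a boundary-plus-interior decomposition, a reflection/stationarity symmetry identifying the two endpoint contributions, and an endpoint computation obtained by specializing the $D_0$ derivation of Theorem \ref{ThmMain} to $p\equiv p_0$, $p'\equiv p''\equiv 0$ --- but the treatment of the interior event is both conceptually and technically off, and that interior estimate is where essentially all the work of this theorem lives.

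The heuristic you give for $P(E_{\mathrm{mid}})=o(e^{-u_h^2/2})$ inverts the conditioning. On the rare event $\{\max|v'|>b\}$ realized at an interior point, $\bar T=\int_0^L t\,e^{\sigma\xi(t)}dt/\int_0^L e^{\sigma\xi(t)}dt$ does \emph{not} sit near its unconditional typical value; it collapses onto the spike of $\xi$ producing the excursion. The real suppression mechanism is this: an untruncated interior spike pulls $\bar T$ onto itself, so at the optimal offset the factor $|x-\bar T|$ is only $\gamma_*(u)e^{-1/2}$, whereas near an endpoint the measure $e^{\sigma\xi(t)}dt$ is truncated, $\bar T$ is pushed inward, and the corresponding factor is $\gamma_*(u)\sup_{\zeta}e^{-\zeta^2/2}E(\zeta-Z\mid Z\le\zeta)$, which exceeds $\gamma_*(u)e^{-1/2}$ by a universal factor $r>1$ (Section \ref{SecHeu}). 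That strict inequality is what makes $u_h<u$, and it has to come out of a quantitative spike analysis, not from unconditional convexity of $|x-\bar T|$; ``nothing in the bulk factor encourages an interior excursion'' is not a proof, because $e^{\sigma\xi(x)}$ can spike anywhere.

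The tools you name do not give the needed gain. A Borell--TIS or Pickands double-sum bound controls $\max_x\xi(x)$ alone, but $|v'(x)|=p_0 e^{\sigma\xi(x)}|x-\bar T|$ can fall below $b$ even when $\xi$ clears $u_h$, so an upper tail bound on $\xi$ does not localize the correct threshold. And a ``crude lower bound'' on $\int_0^L e^{\sigma\xi}$ works against you: on the interior excursion event the denominator is large, $\Theta(u^{-1/2}e^{\sigma u})$, and the penalty is encoded in the \emph{joint} law of $\xi(x)$, $\int e^{\sigma\xi}$, and $\int t\,e^{\sigma\xi}$. What is actually required --- and what the paper does --- is to cover $[0,L]$ by $N=\Theta(Lu^{1/2-\varepsilon})$ shrinking intervals, run the full jet-conditioning / Taylor / change-of-variable machinery inside each, show each interval contributes $O(1)e^{-u^2/2+O(u^{8\varepsilon})}$, and sum. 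Since $p$ is constant there is no distinguished interior $x_*$, so the localization-at-$x_*$ argument from $P(E_1)$ of Theorem \ref{ThmMain} cannot simply be ``reused''; the covering step is the new ingredient. Note also that your fixed-width windows $[0,\varepsilon]$, $[L-\varepsilon,L]$ presuppose this same interior estimate: to invoke the $D_0$ calculation (which is carried out on $[0,u^{-1/2+\delta}]$) you must first show that excursions on $[u^{-1/2+\delta},\varepsilon]$ are negligible, which is again the interior argument. The symmetry step and the $D_0$ specialization are fine; the interior negligibility is a genuine gap.
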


The proof of Theorem \ref{ThmHomo} is very similar to that of Theorem \ref{ThmMain}. We present it in the supplemental material.

%
%\begin{remark}
%As $u_0$  tends to infinity, $\zeta_0 - \zeta_h $ tends to zero. This is because the difference between $H_h$ and $H_0$ is of order $O(u^{-1/2})$ and $p'(x) = 0$ for the case when $p(x) \equiv p_0$.
%\end{remark}
%

\subsection{The intuitions behind the theorems and heuristic calculations}\label{SecHeu}

In this subsection, we provide intuitive interpretations of the previous  asymptotic approximations and further heuristic and non-rigorous calculations that help to understand the main proof. In particular, we focus mostly on the case when $p(x)$ is not a constant.

\paragraph{The most probable high excursion location.}
The approximation in Theorem \ref{ThmMain} consists of three pieces. The first
term $Du^{-1/2}e^{-u^{2}/2}$ corresponds to the probability that the maximum of $%
|v^{\prime }(x)|$ is attained close to the interior point $x_{\ast}=\arg \max_{x\in [0,L]} |p(x)|$;
the terms $D_{0}u_{0}^{-1}e^{-u_{0}^{2}/2}$
and  $D_{L}u_{L}^{-1}e^{-u_{L}^{2}/2}$  correspond to the probabilities that the
excursion of $|v'(x)|$ occurs at the two boundary points $x=0$ and  $x=L$, respectively. Thus, this three-term decomposition of $w(b)$ suggests that
${P(\max_{x\in [\varepsilon, x_*- \varepsilon]\cup [x_*+ \varepsilon,L- \varepsilon] } |v'(x)|>b~|~ \max_{[0,L]}|v'(x)|> b) }\rightarrow 0$ as $b\rightarrow \infty$ for any $\varepsilon>0$.
It is unlikely that the maximum is attained at a location other than the two ends or $x_*$.
%In other words, given that $|v'(x)| $ exceeds a high level of $b$, with probability tending to one, this maximum is attained at $0$, $L$, or close to $x_*$.
In the context of the material failure problem, it suggests that, conditional on a failure, it is  likely that  the material breaks at the two ends or close to the place where the external force  is maximized.
As for which of the three locations is most likely to exhibit a high excursion, it depends on the specific functional forms of $p(x)$. Numerically, for each specific $b$, we can compute the three approximation terms in Theorem  \ref{ThmMain} and then compare among them. This provides the asymptotic probabilities of each of the high excursion locations.
%In the case when  $p(x)$ is a constant, the maximum of $|v'(x)|$ is likely to be obtained at either ends.

We can further perform analytic calculations of the most likely high excursion location of $v'(x)$. Note that all the three terms decay exponentially fast with $u^2$, $u_0^2$, or $u_L^2$. Therefore, the smallest among $u$, $u_0$, and $u_L$ corresponds to the most likely location. Note that $u_0$ and $u_L$ take the same form. Thus, we only need to compare $|p(0)|$ and $|p(L)|$. The larger one corresponds to a smaller $u$-value and therefore yields a more likely high excursion. To compare the boundary case and the interior case, we need to compare $u$ and $u_0$ (or $u_L$). We take $u_0$ as an example. Note that both $u$ and $u_0$ are defined by $b$  implicitly through the equations in similar forms. Therefore, it is sufficient to compare among the two terms
\[
|p(x_*) e^{H(\gamma_*,u)}| =|p(x_*)| \frac{e^{-1/2}}{\sqrt{\sigma\Delta u}}, \mbox{ and  } \frac{\sup_{x\leq \zeta}H_0(x,\zeta,u_0)}{\sqrt{\sigma\Delta u_0}} \sim \frac{|p(0)|\sup_{x\leq \zeta} e^{-x^2/2}E(x-Z|Z\leq \zeta)}{\sqrt{\sigma\Delta u}} .
\]
Furthermore, we consider the ratio %of the above two terms
$$
r \triangleq
\frac{\sup_{x\leq \zeta} e^{-x^2/2}E(x-Z|Z\leq \zeta)}{\sqrt{\sigma\Delta u}}\Big / \frac{e^{-1/2}}{\sqrt{\sigma\Delta u}}
=\sup_{(\zeta,x), s.t.~x\leq \zeta} e^{\frac{1-x^2}{2}} E(x-Z|Z\leq \zeta).$$
%and thus define
%\begin{equation*}
%r \triangleq \sup_{(\zeta,x), s.t.~x\leq \zeta} e^{\frac{1-x^2}{2}} E(x-Z|Z\leq \zeta).
%\end{equation*}
 Note that $r$ is a universal constant strictly greater than $1$. If
 $|p(x_*)|> r|p(0)| ,$
 then $x_*$ is a more probable location to observe a high excursion; if
  $|p(x_*)| < r |p(0)|,$
  then zero is a more probable location. If $p(x)$ is a constant, then $u > u_0 = u_h$. This is why the maximum of $v'(x)$ is not attained in the interior for this case.

 \paragraph{Heuristic calculations.}
 In what follows, we provide a heuristic argument for \eqref{form} that defines $u$.
 %calculations of the asymptotic results.
%The asymptotic decay rate of
%the first term is $u^{-1/2}e^{-u^{2}/2}$. Classic results of the tail
%approximation of $\sup [ \xi (x)+\mu (x)]$ takes the form that
%\[
%P(\sup_{[0,L]}[\xi (x)+\mu (x)] \ >u')\sim C\times (u'-\max \mu
%(x))^{-1/2}e^{-(u'-\max \mu (x))^{2}/2},
%\]%
%as $u'\rightarrow \infty$,
%if the mean function $\mu (x)$ attains it maximum at finitely many locations. The results of Theorem \ref{ThmMain} suggests that
%\[
%P(\sup_{[x_* - \varepsilon, x_*+\varepsilon]}|v^{\prime }(x)|>b)\approx P(\sup_{[0,L]}\xi (x)+\mu
%(x)>u+\max \mu (x)).
%\]%
%Although the input process $\xi(x)$ has zero mean,  the function $p(x)$ plays a similar role as the mean function in the tail probability of the supremum of the process.
%The interpretation of the above equivalence is as follows.
Note that a high level of $|v'(x)|$ implies a high level of $\xi(x)$.
Suppose that $\xi (x)$ attains its maximum at  $\tau\in[0,L]$
that is very close to $x_*$.
%\xz{any justification for this assumption? in particular in high dim}
Then, the process $\xi (x)$ is   approximately quadratic near $\tau$. In particular, conditional on $\xi(\tau)  = u$, $\xi(x)$ admits the representation that
$\xi (x)= E(\xi(x)| \xi(\tau) = u)  + g(x - \tau),$
where  $g(x)$ is a mean-zero Gaussian process.
If we ignore the variation of $g(x)$, then according to Assumption A2 we have that
$\xi (x) \approx E(\xi(x)| \xi(\tau) = u) = uC(x-\tau) \approx u - \frac{\Delta u}{2 } (x- \tau)^2$.
Thus, $e^{\sigma \xi (t)}/\int_0^L e^{\sigma \xi(s)}ds$ is approximately
a Gaussian density with mean $\tau$ and variance $\Delta ^{-1}\sigma
^{-1}u^{-1}$ and the Laplace approximation can be followed. We then have the following  approximation
\[
 F(x)-\int_{0}^{L}\frac{F(t)e^{\sigma \xi (t)}}{\int_{0}^{L}e^{\sigma
\xi (s)}ds}dt \approx F(x)- F(\tau) \approx p(\tau)(x-\tau),
\]%
where $F(x)=\int_{0}^{x}p(s)ds$. Therefore, the strain is approximately
\[
v'(x) = e^{\sigma \xi (x)}\left( F(x)-\int_{0}^{L}\frac{F(s)e^{\sigma \xi (s)}}{%
\int_{0}^{L}e^{\sigma \xi (t)}dt}ds\right) \approx e^{\sigma u-\frac{\sigma \Delta u}{2}(x-\tau)^{2}}\times p(\tau)(x-\tau)
\]%
that is maximized when $x-\tau=\gamma _{\ast }(u)=(u\Delta \sigma)^{-1/2}$. If $\tau$ is close to $x_*$, we can replace $p(\tau)$ by $p(x_*)$ and further approximate $\max_x |v'(x)|$ by
\[
\max_x |v'(x)|\approx p(x_*)\gamma _{\ast }(u)e^{\sigma u-\frac{%
\sigma \Delta u}{2}\gamma _{\ast }^2(u)}=p(x_*)e^{\sigma u } H(\gamma_\ast(u),u).
\]%
If we let the above approximation equal $b$ then this is precisely how $u$ is defined in \eqref{form}.
Therefore, $u$ is the minimum level that the process needs to exceed so that $\max |v'(x)|$ could exceed the level $b$.
%Note that the above approximation is valid only when the maximum of $\xi (x)$, denoted by $\tau $, is attained very close to $x_{\ast}$.
%In particular, it is necessary that $\tau -x_{\ast}=O(u^{-1/2})$.
It is easier for $|v'(x)|$ to exceed a high level when $\tau$ is very closed to $x_*$.
If $\tau $ is distant from $x_{\ast}$, say $|\tau -x_{\ast}|>\varepsilon $, then the
approximation would be
$\max |v^{\prime }(x) | \approx p(\tau )\gamma _{\ast }(u)e^{\sigma u-\frac{\sigma
\Delta u}{2}\gamma _{\ast }^{2}(u)}.$
Since $p(x)$ is strongly concave around $x_{\ast}$, then $p(\tau )\approx
p(x_{\ast})+\frac{p^{\prime \prime }(x_{\ast})}{2}(\tau -x_{\ast
})^{2}<p(x_{\ast})-\lambda \varepsilon ^{2}$ for some $\lambda >0$. Thus,  it is necessary for $\xi(\tau)$ to achieve a higher level  than $u$ when $\tau$ is distant from $x_*$.

The above heuristic calculation outlines our analysis strategy for an interior point $x\in (0,L)$. For the boundary case, i.e., $\tau$ is $O(u^{-1/2})$ distance from $0$ or $L$, the calculations are different. Basically, if we write
$ E(F(S)) = \int_{0}^{L}F(s)e^{\sigma \xi (s)}ds/{\int_{0}^{L}e^{\sigma \xi (v)}dv},$
then $S$ follows approximately a normal distribution when $\tau\in [\varepsilon,L-\varepsilon]$ is in the interior. For the boundary case, e.g., $\tau = L- \zeta/\sqrt u$, the support of the random variable $S$ is truncated beyond the region $[0,L]$ and thus all the calculations consists of conditional normal distributions. This is how we define the functions  $H_0(x,\zeta,u)$ and $H_L(x,\zeta,u)$ that consist of expectations of conditional Gaussian distributions.

When the external force $p(x)$ is a constant, the asymptotic approximation only consists of two terms that correspond to the probabilities that the high excursion of $|v'(x)|$ occurs at either end of the interval.

\section{Proof of Theorem \protect\ref{ThmMain}} \label{SecInH}

The proof in Theorem \protect\ref{ThmMain} is based on the following  inclusion-exclusion formula
\begin{equation}
\sum_{i=1}^{3}P(E_{i})-\sum_{i=1}^{2}\sum_{j=i+1}^{3}P(E_{i}\cap E_{j})\leq
P(\max_{[0,L]}v^{\prime }(x)>b)=P( \cup _{i=1}^{3}E_{i})\leq
\sum_{i=1}^{3}P(E_{i}).  \label{bern}
\end{equation}%
where the  events $E_1, E_2, E_3$ are defined as follows 
\begin{eqnarray}
E_{1}&=&\left\{ \max_{x\in  [u^{-1/2+\delta },L-u^{-1/2+\delta }]} |v'(x)| \  >b
\right \}, \quad E_{2}=\left\{\max_{x\in [0,u^{-1/2+\delta }]}|v'(x)| \   >b\right\}, \notag\\
E_{3}&=&\left\{\max_{x\in \lbrack L-u^{-1/2+\delta },L]}|v^{\prime
}(x)|\  >b\right\},
 \label{event}
\end{eqnarray}
where  $\delta >0$ is sufficiently small but independent of $b$.

The main body is to  derive the approximations for $P(E_{i})$ and $P(E_{i}\cap E_{j})$.
Section \ref{sec:3.1} includes the derivations for  $P(E_1)$ and Section \ref{SecE3} includes the derivations for $P(E_2)$ and $P(E_3)$.
In addition, from the following detailed derivation of $P(E_{1})$ and $P(E_{3})$, it is straight forward to have that
\begin{equation}\label{minorint}
P(E_{1}\cap E_{2})+P(E_{1}\cap E_{3})+P(E_{2}\cap
E_{3})=o(P(E_{1})+P(E_{2})+P(E_{3})).
\end{equation}
Thus,  we with  complete   the proof of Theorem \protect\ref{ThmMain} by the inequality in (\ref{bern})

In the following analysis, we use both $x$ and $t$ to denote the spatial index. In particular, we use $t$ for the index when doing integration and use $x$ when taking the supremum. We first present the Borel-TIS lemma, which was proved independently by \cite{Bor75, CIS}.

\begin{lemma}[Borel-TIS]\label{LemBorel}
Let $\xi(x)$, $x\in \mathcal U$, $\mathcal U$ is a parameter set,
be mean zero Gaussian random field and  $\xi$ is almost surely
bounded on $\mathcal U$. Then,
$E(\max_{\mathcal U}\xi(x) )<\infty,$ and for any real number $b$
\[
P\left(\max_{x\in \mathcal{U}}\xi(  x)
-E[\max_{x\in\mathcal{U}}\xi\left( x\right)  ]\geq b\right)\leq e^{
-\frac{b^{2}}{2\sigma_{\mathcal{U}}^{2}}}  ,\quad \mbox{where $\sigma_{\mathcal{U}}^{2}=\max_{x\in \mathcal{U}}Var[\xi( x)]$.}
\]

\end{lemma}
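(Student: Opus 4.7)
The plan is to follow the classical route to the Borel--TIS inequality via Gaussian isoperimetry/concentration, after reducing to finite index sets. First I would exploit almost-sure boundedness together with separability of $\xi$ to pick a countable dense $\mathcal{U}_{0}\subset \mathcal{U}$ with $\sup_{\mathcal{U}}\xi = \sup_{\mathcal{U}_{0}}\xi$ almost surely, and write $\sup_{\mathcal{U}_{0}}\xi$ as the a.s.\ increasing limit of $M_{n}=\max_{x\in \mathcal{U}_{n}}\xi(x)$ for an increasing sequence of finite sets $\mathcal{U}_{n}\uparrow \mathcal{U}_{0}$. Because the target tail bound depends on $\mathcal{U}$ only through $\sigma_{\mathcal{U}}^{2}$, which dominates $\sigma_{\mathcal{U}_{n}}^{2}$ for every $n$, and $EM_{n}\uparrow E\sup_{\mathcal{U}}\xi$ by monotone convergence, it suffices to prove the inequality on each finite $\mathcal{U}_{n}$ with constant $\sigma_{\mathcal{U}}$ and then pass to the limit.

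Second, on a finite index set $\mathcal{U}_{n}=\{x_{1},\dots,x_{N}\}$ I would represent the Gaussian vector $(\xi(x_{1}),\dots,\xi(x_{N}))$ as $AZ$ for some matrix $A$ and standard Gaussian $Z\sim N(0,I_{k})$, so that $M_{n}=F(Z)$ with $F(z)=\max_{i\leq N}\langle a_{i},z\rangle$, where $a_{i}$ is the $i$-th row of $A$. Since $\|a_{i}\|^{2}=\mathrm{Var}(\xi(x_{i}))\leq \sigma_{\mathcal{U}}^{2}$, a one-line estimate gives
\[
|F(z)-F(z')|\;\leq\; \max_{i}|\langle a_{i},z-z'\rangle|\;\leq\; \sigma_{\mathcal{U}}\,\|z-z'\|,
\]
so $F$ is $\sigma_{\mathcal{U}}$-Lipschitz on $\mathbb{R}^{k}$. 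The Gaussian concentration inequality then yields $P(F(Z)-EF(Z)\geq b)\leq \exp\{-b^{2}/(2\sigma_{\mathcal{U}}^{2})\}$, which is the claimed bound for $M_{n}$, and the limiting procedure above transfers it to $\sup_{\mathcal{U}}\xi$.

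Third, to secure $E[\sup_{\mathcal{U}}\xi]<\infty$ I would apply the tail bound just obtained for finite $M_{n}$ (whose expectation is automatically finite) around its median $m_{n}$, using the standard consequence that $|m_{n}-EM_{n}|$ is bounded in terms of $\sigma_{\mathcal{U}}$; since $m_{n}$ stays bounded by almost-sure finiteness of $\sup \xi$, the $EM_{n}$ remain bounded, and Fatou delivers integrability of the limit. The main obstacle is the Gaussian concentration inequality itself, which is not elementary: the canonical derivation is Borell's, via the Sudakov--Tsirelson Gaussian isoperimetric inequality on $\mathbb{R}^{k}$ applied to the half-space majorant of the level set $\{F\leq EF\}$. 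Alternatives include Herbst's argument starting from the Gaussian log-Sobolev inequality, or an interpolation along the Ornstein--Uhlenbeck semigroup. Once that analytic tool is accepted, the Lipschitz reduction is the conceptually important but routine step that ties Borel--TIS to a single universal concentration statement.
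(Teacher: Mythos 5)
The paper does not prove this lemma at all: it is stated as a standard tool and attributed to its original sources (Borell 1975; Cirel'son--Ibragimov--Sudakov), and the authors simply invoke it in later arguments. So there is no ``paper's own proof'' to compare against. Your sketch is the canonical derivation: reduce by separability and almost-sure boundedness to an increasing sequence of finite index sets; on a finite set realize the maximum as a $\sigma_{\mathcal U}$-Lipschitz function $F(Z)=\max_i\langle a_i,Z\rangle$ of a standard Gaussian vector, where the Lipschitz constant is controlled because $\|a_i\|^2=\mathrm{Var}\,\xi(x_i)\le\sigma_{\mathcal U}^2$; apply Gaussian concentration to $F$; and then pass to the limit, using median bounds and Fatou to secure $E\sup_{\mathcal U}\xi<\infty$. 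This is correct and complete modulo the Gaussian concentration inequality itself, which you rightly flag as the nontrivial analytic input (Borell's isoperimetric argument, or Herbst/log-Sobolev, or an Ornstein--Uhlenbeck semigroup interpolation). One small caution worth making explicit if you write this up in full: the reduction step needs $\xi$ to be separable (or at least to have a measurable version of the supremum); that hypothesis is implicit in the lemma as the paper states it, and you correctly assume it, but it should be stated rather than taken for granted. Otherwise your route is exactly the standard one and there is nothing in the paper to contrast it with.
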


\subsection{Approximation for $P(E_{1})$}
\label{sec:3.1}

%Keep in mind  that $p(x_{\ast})>0$.
 Consider the
following change of variables  from $(\xi(x_*),\xi'(x_*),\xi''(x_*)) $ to $(w,y,z)$  that depends on the variable $u$
\[
w\triangleq \xi (x_{\ast}) - u, \quad  y \triangleq \xi ^{\prime }(x_{\ast}),\quad
z\triangleq u+\xi ^{\prime\prime } (x_{\ast})/ \Delta.
\]
We further write
$P(\cdot | \xi (x_{\ast}) = u + w, \xi ^{\prime }(x_{\ast})= y, \xi''(x_*) = -\Delta ( u-z)) = P(\cdot | w,y,z)$ and obtain
\begin{equation}
P(E_{1})=\Delta \int P(E_{1}|w,y,z)h(w,y,z)dwdydz.  \label{intt}
\end{equation}%
where $h(w,y,z)$ is the density function of $(\xi (x_{\ast}),\xi ^{\prime
}(x_{\ast}),\xi ^{\prime \prime }(x_{\ast}))$ evaluated at $(u+w,y,-\Delta
(u-z))$. The following proposition localizes the event to a region  convenient for Taylor expansion on $\xi (x)$.

\begin{proposition}
\label{PropLocal} Under the conditions in Theorem \ref{ThmMain},  consider%
\[
\mathcal L_{u}=\{|w|<u^{3\delta }\}\cap \{|y|<u^{1/2+4\delta }\}\cap
\{|z|<u^{1/2+4\delta }\}.
\]%
Then, for any $\delta >0$, we have that
$P(\mathcal L_{u}^c;E_{1})=o(u^{-1}e^{-u^{2}/2}).$
\end{proposition}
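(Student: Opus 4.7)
The proof proceeds by decomposing $\mathcal{L}_u^c\cap E_1$ into regions determined by which of the three inequalities defining $\mathcal{L}_u$ fails, and bounding each piece separately. The key tools are the explicit joint Gaussian density $h(w,y,z)$ of $(\xi(x_*),\xi'(x_*),\xi''(x_*))$ and, in the most delicate case, a Pickands-type sharp tail asymptotic for $\sup\xi$.

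Under Assumption A2, stationarity gives $\operatorname{Cov}(\xi(x_*),\xi'(x_*))=-C'(0)=0$ and $\operatorname{Cov}(\xi'(x_*),\xi''(x_*))=C'''(0)=0$, so $y$ is independent of $(\xi(x_*),\xi''(x_*))$, and the latter pair is bivariate Gaussian with variances $1,A$ and covariance $-\Delta$. A direct inversion yields
\[
-2\log h(w,y,z)=u^{2}+2uw+\frac{Aw^{2}+2\Delta^{2}wz+\Delta^{2}z^{2}}{A-\Delta^{2}}+\frac{y^{2}}{\Delta}+\mathrm{const}.
\]
In the region $\{w>u^{3\delta}\}$ the cross term $2uw\geq 2u^{1+3\delta}$ provides a super-polynomial factor $e^{-u^{1+3\delta}}$ beyond $e^{-u^{2}/2}$; using $P(E_1\mid w,y,z)\leq 1$ and integrating the remaining Gaussian density over $(y,z)$ gives $o(u^{-1}e^{-u^{2}/2})$. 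Analogously, in $\{|w|\leq u^{3\delta},\,|y|>u^{1/2+4\delta}\}$ the term $y^{2}/\Delta\geq u^{1+8\delta}/\Delta$ produces an extra factor $e^{-u^{1+8\delta}/(2\Delta)}$, and in $\{|w|\leq u^{3\delta},\,|y|\leq u^{1/2+4\delta},\,|z|>u^{1/2+4\delta}\}$ the term $\Delta^{2}z^{2}/(A-\Delta^{2})$ gives $e^{-cu^{1+8\delta}}$. Both rates absorb any polynomial volume in the region of integration and yield $o(u^{-1}e^{-u^{2}/2})$.

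The delicate case is $\{w<-u^{3\delta}\}$, where $h$ is \emph{not} small compared to $e^{-u^{2}/2}$ because $2uw<0$; the extra decay must come from the rarity of $E_1$ itself given $\xi(x_*)\ll u$. Following the heuristic in Section \ref{SecHeu}, the event $E_1$ forces $\xi$ to peak at some interior $\tau$ with $\xi(\tau)\approx u$; combining $\xi(x_*)<u-u^{3\delta}$ with the Taylor expansion $\xi(x_*)\approx\xi(\tau)-\tfrac{\Delta u}{2}(x_*-\tau)^{2}$ forces $|\tau-x_*|\geq c\,u^{3\delta/2-1/2}$. Strict concavity of $|p|$ at $x_*$ then gives $p(\tau)\leq p(x_*)-c'(x_*-\tau)^{2}$, and equation \eqref{form2} forces $\xi(\tau)\geq u+c''u^{3\delta-1}$ in order for $|v'(x)|>b$ to still hold near $\tau$. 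A Pickands-type sharp estimate $P(\sup_{[0,L]}\xi\geq u+s)\lesssim c\,L\,e^{-(u+s)^{2}/2}$ applied with $s=c''u^{3\delta-1}$ yields $P(E_1\cap\{w<-u^{3\delta}\})\lesssim L\,e^{-u^{2}/2}\,e^{-cu^{3\delta}}=o(u^{-1}e^{-u^{2}/2})$.

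The main obstacle is precisely this $\{w<-u^{3\delta}\}$ case: a crude Borel--TIS bound on $P(\sup\xi\geq u-C\log u)$ is of order $e^{Cu\log u}e^{-u^{2}/2}$, which is far too large. The argument must therefore rely on the Pickands-type sharp asymptotic together with the strict concavity of $|p|$ near $x_*$ to extract the additional decay factor $e^{-cu^{3\delta}}$. Physically, this reflects that a high excursion of $|v'(x)|$ in the interior forces $\xi$ to peak near $x_*$ at height close to $u$, which is incompatible with $\xi(x_*)$ being much smaller than $u$.
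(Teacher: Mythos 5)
Your decomposition of $\mathcal L_u^c$ by which inequality fails, and your density bounds for the cases $w>u^{3\delta}$, $|y|>u^{1/2+4\delta}$, and $|z|>u^{1/2+4\delta}$, are correct and match what the paper does in the final step (Lemma~\ref{LemL2}: direct Gaussian tail bounds combined with Borel--TIS). You also correctly identify $w<-u^{3\delta}$ as the genuinely delicate piece and correctly intuit the source of the extra decay: the strict concavity of $|p|$ at $x_*$ forces a peak of $\xi$ located $\Omega(u^{3\delta/2-1/2})$ away from $x_*$ to be higher than $u$ by $\Omega(u^{3\delta-1})$. This is the same mechanism the paper exploits.

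Where your argument has a real gap is in the step converting this intuition into an inclusion $E_1\cap\{w<-u^{3\delta}\}\subset\{\sup\xi>u+cu^{3\delta-1}\}$ so that a Pickands-type tail bound can be applied. That inclusion is derived from a Taylor expansion of $\xi$ around its peak, which is valid only on a high-probability regularity set for the residual process $g$ (and only after the $y,z$ bounds are already in force, creating a circularity you do not resolve). You do not carve out that regularity set, nor do you show that the complement is negligible uniformly over the bad $(w,y,z)$ region; recall that $P(\mathcal L_u^c)=\Theta(1)$, not $o(1)$, so you cannot simply discard the complement of a high-probability event without a careful union bound adapted to the failure mode.

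The paper takes a structurally different and tighter route. It first proves (Lemmas~\ref{LemL1} and \ref{LemL11}) that $E_1$ together with $\{\sup_{\text{interior, away from }x_*}\xi>u-(\log u)^2\}$ is $o(u^{-1}e^{-u^2/2})$, via a change of measure $Q_\zeta$ supported on the excursion set over level $\zeta$ (defined in \eqref{measure}); this plays the role of rigorously conditioning on the location of the high excursion and is where the quadratic decay of $p$ near $x_*$ and the factor $e^{-\lambda u^\delta}$ actually enter. Only after this localization is established does the paper handle $|w|>u^{3\delta}$ (Lemma~\ref{LemL2}): on the event that the high excursion is confined to a window of radius $u^{-1/2+\delta/2}$ around $x_*$, requiring both $\sup\xi>u-O(\log u)$ and $|\xi(x_*)-u|>u^{3\delta}$ forces an increment of size $\gtrsim u^{3\delta}$ over a spatial distance $O(u^{-1/2+\delta/2})$, which Borel--TIS crushes at super-exponential rate. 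In short, the paper never invokes a Pickands asymptotic; it uses the excursion-set change of measure to localize and then elementary Borel--TIS on the small interval. Your approach could perhaps be repaired along these lines, but as stated the key inclusion rests on an uncontrolled expansion.
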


The proof of this proposition is presented in the supplemental material.
This proposition localizes the event $E_1$ to a region  where the maximum of $v'(x)$ is achieved around $x_*$.
%Let
%$$\mathcal L_{u}=\mathcal G_{u}^{c}$$
%be the complementary set of $\mathcal G_u$.
The above proposition suggests that we only need to consider the event on the set $\mathcal L_{u}$, that is,
$\Delta \int_{\mathcal  L_{u}}P(E_1|w,y,z)h(w,y,z)dwdydz.$

Conditional on $(\xi (x_{\ast}),\xi ^{\prime }(x_{\ast}),\xi ^{\prime
\prime }(x_{\ast}))$,
we write the process in the following
representation
%\begin{eqnarray*}
$\xi (x) =E(\xi (x)|w,y,z)+g(x-x_{\ast}).$
%\end{eqnarray*}
The process $g(x-x_*)$ represents the variation of $\xi(x)$ when $\xi(x_*)$ and its first two derivatives have been fixed.
Thus, $g(x-x_*)$ is a mean-zero Gaussian process almost surely three-time differentiable. Using conditional Gaussian calculations and Taylor expansion, we have that $Var(g(x-x_*))=O(|x-x_*|^{6})$, that is, $g(x-x_*)=O_{p}(|x-x_*|^{3})$ as $g$ is the remainder term after conditioning on $\xi(x_*)$ and the first two derivatives.
Note that the distribution of $g(x)$ is free of $(w,y,z)$.
Let $\bar E(x;w,y,z) \triangleq E(\xi (x)|w,y,z)$.
By means of the conditional Gaussian calculations  (Chapter 5.5 \cite{AdlTay07}), we have that
\begin{equation*}
\begin{split}
\partial\bar E(x_*;w,y,z) = y,
~~ \partial^2\bar E(x_*;w,y,z) =  - \Delta (u-z),\\
~~ \partial^3\bar E(x_*;w,y,z) = -\frac{A}{\Delta}y,
~~ \partial^4\bar E(x_*;w,y,z) = Au+O(z),
\end{split}
\end{equation*}
where ``$\partial$'' is the partial derivative with respect to $x$.
We perform Taylor expansion on $\bar E(x;w,y,z)$. Using the notation   $\vartheta(x)=O(u^{1/2+4\delta }x^{4}+ux^{6})$, we  obtain that on the set $\mathcal L_u$
\begin{equation}
\begin{split}
\label{expansion}
\xi(x)=&u+w+y(x-x_{\ast})-\frac{\Delta (u-z)}{2}(x-x_{\ast})^{2}  \\
&~~~  -\frac{A}{6\Delta }y(x-x_{\ast})^{3}+\frac{Au}{24}(x-x_{\ast
})^{4}+g(x-x_{\ast})+\vartheta (x-x_{\ast})   \\
=&u+w+\frac{y^{2}}{2\Delta (u-z)}-\frac{\Delta (u-z)}{2}\Big( x-x_{\ast}-%
\frac{y}{\Delta (u-z)}\Big) ^{2}  \\
&~~~-\frac{A}{6\Delta }y(x-x_{\ast})^{3}+\frac{Au}{24}(x-x_{\ast})^{4}+g(x-x_{\ast})+\vartheta (x-x_{\ast}).  
\end{split}
\end{equation}%
For $\delta >0$,
we further localize the event by the following proposition, the proof of which is provided in the supplemental material.

\begin{proposition}
\label{PropG} For each $\delta ,\delta ^{\prime }>0$ chosen small enough and
$\delta ^{\prime }>24\delta $, we have that
\begin{eqnarray*}
P\Big(
\sup_{|x|>u^{-1/2+8\delta }}(|g(x)|-\delta ^{\prime }ux^{2})
>0,\ \mathcal L_{u}\Big ) &=& o(u^{-1}e^{-u^{2}/2}), \\
P\Big(\sup_{|x|\leq u^{-1/2+8\delta }} |g(x)| >u^{-1/2+\delta ^{\prime}},~ \mathcal L_{u}\Big )&=&o(u^{-1}e^{-u^{2}/2}).
\end{eqnarray*}
\end{proposition}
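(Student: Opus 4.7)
The residual process $g$ is, by construction, distributionally independent of the conditioning vector $(w,y,z)$, so the intersection with $\mathcal L_u$ plays no active role; we simply bound $P(\text{event},\mathcal L_u)\le P(\text{event})$. Moreover, since $g$ is the residual after conditioning on $\xi(x_*),\xi'(x_*),\xi''(x_*)$, one has $g(0)=g'(0)=g''(0)=0$ almost surely and $\mathrm{Var}(g^{(k)}(x))=O(x^{6-2k})$ for $k=0,1,2,3$ (by the same conditional-Gaussian expansion used to derive $\mathrm{Var}(g(x))=O(x^6)$). My plan is to apply the Borel--TIS inequality (Lemma \ref{LemBorel}) twice---once on the short interval $|x|\le u^{-1/2+8\delta}$ for the second claim, and once on a dyadic decomposition of the outer region for the first claim.

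\textbf{Inner bound.} On $I_{\mathrm{in}}=[-u^{-1/2+8\delta},u^{-1/2+8\delta}]$ one has $\sigma^2:=\sup_{I_{\mathrm{in}}}\mathrm{Var}(g)=O(u^{-3+48\delta})$. The Taylor formula with integral remainder gives $|g(x)|\le \tfrac{|x|^3}{6}\sup_{|y|\le|x|}|g'''(y)|$, so $E\sup_{I_{\mathrm{in}}}|g|\le C u^{-3/2+24\delta}\cdot E\sup_{|y|\le R_0}|g'''(y)|$ for any fixed $R_0$; the latter supremum has finite expectation by a first, harmless application of Borel--TIS to the bounded-variance process $g'''$. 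Since $\delta'>24\delta$, this mean supremum is negligible compared with the threshold $u^{-1/2+\delta'}$, and Borel--TIS then yields
\[
P\Bigl(\sup_{I_{\mathrm{in}}}|g|>u^{-1/2+\delta'}\Bigr)\le 2\exp\bigl(-c\,u^{2+2\delta'-48\delta}\bigr),
\]
whose exponent strictly exceeds $u^2$ for $\delta$ small, producing $o(u^{-1}e^{-u^2/2})$.

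\textbf{Outer bound.} Partition $\{u^{-1/2+8\delta}<|x|\le R\}$ (with $R$ a small constant on which the local variance expansion $\mathrm{Var}(g(x))\le Cx^6$ is valid) into dyadic shells $I_k=\{r_k<|x|\le 2r_k\}$ with $r_k=2^{k-1}u^{-1/2+8\delta}$, yielding $O(\log u)$ pieces. On $I_k$ one has $\sup_{I_k}\mathrm{Var}(g)\le Cr_k^6$ and, by the same Taylor estimate, $E\sup_{I_k}|g|=O(r_k^3)$; both are much smaller than the threshold $\delta' u r_k^2$ since $r_k\le R\ll u$. Borel--TIS on each shell gives
\[
P\Bigl(\sup_{I_k}|g|>\delta' u r_k^2\Bigr)\le 2\exp\bigl(-c\delta'^2 u^2/r_k^2\bigr),
\]
and the worst shell $r_k\asymp R$ produces exponent $c\delta'^2 u^2/R^2$. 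Choosing $R$ small---a free parameter inherited from the localization of $E_1$ near $x_*$---makes this exponent strictly exceed $u^2/2$; for smaller shells the exponent $c\delta'^2 u^2/r_k^2$ is even larger. A union bound over the $O(\log u)$ shells absorbs only a logarithmic factor.

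\textbf{Main obstacle.} The delicate step is the uniform expected-supremum estimate $E\sup_{I_k}|g|=O(r_k^3)$, which hinges on verifying that $g'''$ is a well-defined centered Gaussian process with uniformly bounded variance on a fixed compact interval. This is not immediate---it requires differentiating the conditional-mean representation $E(\xi(\cdot)|\xi(x_*),\xi'(x_*),\xi''(x_*))$ three times and checking, via Assumption A2 and the conditional-Gaussian calculus, that the resulting covariance of $g'''$ is bounded. Once this is secured, the twin Borel--TIS bounds combined with the union bound over $O(\log u)$ dyadic shells close out the proof without further surprises.
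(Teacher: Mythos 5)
The decisive idea in the paper's proof is that $g(\cdot)$ is independent of $(w,y,z)$ and $\mathcal L_u$ is an event in the $\sigma$-field generated by $(w,y,z)$; hence $P(\text{$g$-event},\,\mathcal L_u)=P(\text{$g$-event})\cdot P(\mathcal L_u)$, and since $P(\mathcal L_u)=O\bigl(e^{-u^2/2+O(u^{1+3\delta})}\bigr)$ already carries essentially all of the required decay, it suffices to make $P(\text{$g$-event})$ decay at roughly super-stretched-exponential speed (anything beating $e^{-Cu^{1+3\delta}}$ will do). Your proposal discards exactly this independence by invoking only the one-sided bound $P(\text{$g$-event},\,\mathcal L_u)\le P(\text{$g$-event})$, which forces the $g$-event \emph{on its own} to be $o(u^{-1}e^{-u^2/2})$. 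For the inner bound (variance $O(u^{-3+48\delta})$, threshold $u^{-1/2+\delta'}$, $\delta'>24\delta$) that is indeed true and your Borel--TIS estimate $\exp(-cu^{2+2\delta'-48\delta})$ is fine.

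For the outer bound, however, the argument breaks down. The supremum in the proposition runs over all $|x|>u^{-1/2+8\delta}$ up to the full (fixed) domain; you explicitly restrict to $|x|\le R$ "with $R$ a small constant" and then tune $R$ so that the worst-shell exponent $c\delta'^2u^2/R^2$ exceeds $u^2/2$. But there is no mechanism in the statement---nor can one be borrowed from "the localization of $E_1$"---that removes the region $|x|>R$: $\mathcal L_u$ constrains $(w,y,z)$, not $g$, so it does not shrink the range of the $g$-supremum. On that omitted region $\mathrm{Var}(g(x))$ is only $O(1)$ (it is bounded by the unconditional variance of $\xi$) while the threshold $\delta'ux^2\asymp \delta'u$ is linear in $u$, so Borel--TIS gives decay no better than $\exp(-c\,\delta'^2u^2)$, which---because $\delta'$ is \emph{small}---is much \emph{larger} than $u^{-1}e^{-u^2/2}$, not smaller. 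The gap is therefore real: without multiplying by $P(\mathcal L_u)$ via independence, the $g$-event at distances of order one cannot be made negligible. A correct completion must either reinstate the factorization (as the paper does), or restrict the process a priori to a window that shrinks with $u$ (which the event $\mathcal L_u$ does not do).
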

With this proposition, let
\[
\mathcal  L_{u}^{\prime }=\mathcal L_{u}\cap
\Big\{\sup_{|x|>u^{-1/2+8\delta }}[|g(x)|-\delta^{\prime }ux^{2}]<0\Big\}
\cap
\Big \{\sup_{|x|\leq u^{-1/2+8\delta }}|g(x)|<u^{-1/2+\delta ^{\prime  }} \Big\}.
\]%
We further reduce the event to
$\Delta \int_{\mathcal L_{u}}P(E_{1},\mathcal L_u '|w,y,z)h(w,y,z)dwdydz.$
The analysis of $P(E_{1})$ consists of three steps.
\begin{enumerate}
\item [Step 1] We continue the
calculation in (\ref{expansion}) and write  $v'(x)$ in an
analytic form of $(w,y,z)$ with a small correction term.

\item [Step 2] We write the event $E_1$ in an
analytic form of $(w,y,z)$ with a small correction term.

\item [Step 3] We evaluate
the integral in (\ref{intt}) using\ the results from Step 2 and the
analytic form of $h(w,y,z)$.
\end{enumerate}

\subsubsection{Step 1: $v'(x)$}

It is necessary to keep in mind that all the following derivations are on the set $\mathcal L_u'$.
Consider the change of variable that
\begin{equation}
\label{eqn:schange}
 s=  s(x):  x \to \sqrt{\Delta (u-z)}\Big( x-x_{\ast
}-\frac{y}{\Delta (u-z)}\Big) .
\end{equation}
We insert $s$ to the expansion in \eqref{expansion} and obtain that (after some elementary calculations)
\begin{eqnarray}\label{xi}
\xi (x) &=&u+w+\frac{y^{2}}{2\Delta (u-z)}-\frac{Ay^{4}}{8\Delta
^{4}(u-z)^{3}} -\frac{s^{2}}{2}-\frac{Ay^{3}}{3\Delta ^{7/2}(u-z)^{5/2}}s\\
&&-\frac{Ay^{2}}{%
4\Delta ^{3}(u-z)^{2}}s^{2}+\frac{A}{24\Delta ^{2}(u-z)}s^{4} +g(x-x_{\ast})+\vartheta (x-x_{\ast})+o(s^{4}u^{-5/4}).\notag
\end{eqnarray}%
%for some $\varepsilon ^{\prime \prime }>0$.
To begin with, we are interested in
approximating%
\begin{equation}
F(x)-\frac{\int_{0}^{L}F(t)e^{\sigma \xi (t)}dt}{\int_{0}^{L}e^{\sigma \xi(t)}dt} = \frac{\int_{0}^{L}(F(x) - F(t))e^{\sigma \xi (t)}dt}{\int_{0}^{L}e^{\sigma \xi(t)}dt}.  \label{factor}
\end{equation}%
To compute the integration, it is convenient to write the terms
in the above expansion formula for $\xi(x)$  that do not  include $x$ (or equivalently $s$) as
$
c_{\ast }\triangleq \sigma \left[ u+w+\frac{y^{2}}{2\Delta (u-z)}-\frac{Ay^{4}}{%
8\Delta ^{4}(u-z)^{3}}\right] .
$
We first consider the denominator%
\begin{eqnarray*}
\int_{0}^{L}e^{\sigma \xi (x)}dx &=&e^{c_{\ast }}\int_{0}^{L}\exp \bigg \{\sigma
\big \lbrack -\frac{s^{2}}{2}-\frac{Ay^{3}}{3\Delta ^{7/2}(u-z)^{5/2}}s-\frac{%
Ay^{2}}{4\Delta ^{3}(u-z)^{2}}s^{2} \\
&&~~~~~~~~~~~~~~~~~~~~~~+\frac{A}{24\Delta ^{2}(u-z)}s^{4}+g(x-x_{\ast})+\vartheta (x-x_{\ast
}) \big ]  \bigg \}dx,
\end{eqnarray*}%
and separate it into two parts%
\begin{eqnarray}
\int_{0}^{L}e^{\sigma \xi (x)}dx &=&\int_{|x-x_{\ast}|<u^{-1/2+8\delta
}}e^{\sigma \xi (x)}dx+\int_{|x-x_{\ast}|\geq u^{-1/2+8\delta }}e^{\sigma
\xi (x)}dx.  \label{split} \\
&=&J_{1}+J_{2}.  \nonumber
\end{eqnarray}%
%We choose $\varepsilon <\delta <4\varepsilon ^{\prime \prime }$ in the above
%expression, where $\varepsilon$ is used to define the localization set $L_u$.
%We choose $\delta$ small enough.
According to Assumption A3 and on the set $%
\{\sup_{|x|>u^{-1/2+8\delta }} [|g(x)|-\delta ^{\prime }ux^{2} ]  \leq 0\}$ ($\delta'$ can be chosen arbitrarily small), there
exists some $\varepsilon _{0}>0$ so that the minor term%
\[
J_{2}=\int_{|x-x_{\ast}|\geq u^{-1/2+8\delta }}e^{\sigma \xi (x)}dx\leq
\int_{|x-x_{\ast}|\geq u^{-1/2+8\delta }}e^{c_{\ast }-2\varepsilon
_{0}u(x-x_{\ast})^{2}}\leq e^{c_{\ast }-\varepsilon _{0}u^{16\delta }}.
\]%
We now proceed to the dominating term $J_1$. Note that, on the set $|x-x_{\ast
}|<u^{-1/2+8\delta }$, $\vartheta (x-x_{\ast})=o(u^{-1})$. Then, we obtain
that%
\begin{equation*}
\begin{split}
J_{1} &=\frac{e^{c_{\ast }+o(u^{-1})}}{\sqrt{\Delta (u-z)}}e^{\omega (u)}  \times \\
& \int_{|x-x_{\ast}|<u^{-1/2+8\delta }}
\exp\left \{\sigma \left \lbrack -\frac{%
s^{2}}{2}-\frac{Ay^{3}}{3\Delta ^{7/2}(u-z)^{5/2}}s-\frac{Ay^{2}}{4\Delta
^{3}(u-z)^{2}}s^{2}+\frac{A}{24\Delta ^{2}(u-z)}s^{4}\right]
\right \}ds,
\end{split}
\end{equation*}%
where $\omega (u)=O(\sup_{|x|\leq u^{-1/2+8\delta }}|g(x)|)$. Since $%
Var(g(x))=O(|x|^{6})$,  it is helpful to keep in mind that $\omega
(u)=O_{p}(u^{-3/2+24\delta })$.

\begin{lemma}
\label{LemInt} On the set $\mathcal L_{u}^{\prime }$, we have that%
\begin{eqnarray*}
&&\int_{|x-x_{\ast}|<u^{-1/2+8\delta }}e^{\sigma \lbrack -\frac{s^{2}}{2}-%
\frac{Ay^{3}}{3\Delta ^{7/2}(u-z)^{5/2}}s-\frac{Ay^{2}}{4\Delta ^{3}(u-z)^{2}%
}s^{2}+\frac{A}{24\Delta ^{2}(u-z)}s^{4}]}ds \\
&=&\sqrt{\frac{2\pi }{\sigma }}\exp \left\{ -\frac{Ay^{2}}{4\Delta
^{3}(u-z)^{2}}+\frac{A}{8\Delta ^{2}\sigma u}+o(u^{-1})\right\}
\end{eqnarray*}
\end{lemma}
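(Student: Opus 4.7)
After the substitution $s = \sqrt{\Delta(u-z)}\bigl(x - x_* - y/[\Delta(u-z)]\bigr)$, the integrand is a Gaussian in $s$ perturbed by a linear term with coefficient $\alpha := Ay^3/[3\Delta^{7/2}(u-z)^{5/2}]$, a quadratic perturbation with coefficient $\beta := Ay^2/[4\Delta^3(u-z)^2]$, and a quartic with coefficient $\gamma := A/[24\Delta^2(u-z)]$. On $\mathcal L_u'$ these satisfy $\alpha = O(u^{-1+12\delta})$, $\beta = O(u^{-1+8\delta})$, and $\gamma = \Theta(u^{-1})$, while the range of $s$ is $O(u^{8\delta})$. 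The plan is to integrate the quadratic--linear part exactly as a shifted Gaussian and to Taylor-expand the quartic perturbation, retaining only contributions that are not $o(u^{-1})$.

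First, I would extend the domain to all of $\mathbb{R}$; since the perturbation to the Gaussian exponent is $o(1)$ on the original range, the tails outside $|s| \leq u^{8\delta}$ are dominated by $\exp\{-c\sigma s^2\}$ and contribute at most $O(\exp(-cu^{16\delta}))$, which is absorbed into $o(u^{-1})$. Second, completing the square on $-\sigma(\tfrac12+\beta)s^2 - \sigma\alpha s$ produces a shifted Gaussian with variance $1/[2\sigma(\tfrac12+\beta)]$, a prefactor $\sqrt{2\pi/\sigma}\,(1+2\beta)^{-1/2} = \sqrt{2\pi/\sigma}\,\exp\{-\beta + O(\beta^2)\}$, and a boundary constant $\sigma\alpha^2/[4(\tfrac12+\beta)] = O(u^{-2+24\delta})$. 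Since $\beta^2 = O(u^{-2+16\delta}) = o(u^{-1})$, the prefactor recovers the first correction $-Ay^2/[4\Delta^3(u-z)^2]$ in the claim. Third, writing $s = s' - s_0$ with $s_0 = \alpha/(1+2\beta) = O(u^{-1+12\delta})$ and expanding $\exp\{\sigma\gamma(s'-s_0)^4\} = 1 + \sigma\gamma(s'-s_0)^4 + O(\sigma^2\gamma^2 (s')^8)$, the odd Gaussian moments vanish under the shifted Gaussian in $s'$. The dominant surviving term is $\sigma\gamma\,\langle (s')^4\rangle = 3\gamma/\sigma + O(\beta/u)$, and using $1/(u-z) = 1/u + O(u^{-3/2+4\delta})$ this equals $A/[8\Delta^2\sigma u] + o(u^{-1})$, the second correction. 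Exponentiating $1 + 3\gamma/\sigma$ and combining with the Gaussian prefactor yields the stated expression.

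The main obstacle is the error accounting: several small quantities must be verified to be $o(u^{-1})$ uniformly on $\mathcal L_u'$, which imposes an implicit upper bound on $\delta$ (for instance $\delta < 1/48$ is more than sufficient). The cross-terms between the linear shift $s_0$ and the quartic (yielding $\gamma s_0^2 = O(u^{-3+24\delta})$ and $\sigma\gamma s_0^4 = O(u^{-5+48\delta})$), the interaction between $\beta s^2$ and $\gamma s^4$ (entering through $\beta$-corrections to the Gaussian moments), and the second-order Taylor remainder $\sigma^2\gamma^2\langle (s')^8\rangle = O(u^{-2})$ all collapse into $o(u^{-1})$, but only after this bookkeeping. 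No deep new ideas beyond standard perturbed-Gaussian-integral asymptotics are required, and the calculation runs in parallel with the conditional Gaussian reductions already used in Propositions~\ref{PropLocal} and~\ref{PropG}.
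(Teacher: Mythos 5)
Your computation is correct and reaches the stated expansion by a route very close to the paper's: both treat the integral as a perturbed Gaussian and rely on the perturbation terms being $O(u^{-1+c\delta})$ on the range $|s|=O(u^{8\delta})$. The paper simply Taylor-expands the whole perturbation $e^{\sigma(-\alpha s - \beta s^2 + \gamma s^4)}=1-\sigma\alpha s-\sigma\beta s^2+\sigma\gamma s^4+(\text{error absorbed into }e^{o(u^{-1})})$ on the compact domain and then reads off moments of $N(0,1/\sigma)$, obtaining $\sqrt{2\pi/\sigma}\,(1-\beta+3\gamma/\sigma+o(u^{-1}))$ with $\beta=Ay^2/[4\Delta^3(u-z)^2]$, $\gamma=A/[24\Delta^2(u-z)]$; you instead absorb the linear and quadratic pieces exactly by completing the square and expand only the quartic. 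Both re-exponentiate to the same formula, and your error budget ($\sigma\alpha^2$ from the square-completion constant, $\beta^2$ from the normalization, $\gamma^2\langle(s')^8\rangle$ from the second-order Taylor remainder, $\gamma\beta$ and $\gamma s_0^2$ cross-terms) is correctly assessed as $o(u^{-1})$ under the implicit constraint that $\delta$ is small.

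One ordering caveat: you put ``extend the domain to $\mathbb{R}$'' as the first step, before Taylor-expanding $e^{\sigma\gamma s^4}$. Taken literally, the extended integral $\int_{\mathbb{R}}\exp\{\sigma(-s^2/2-\alpha s-\beta s^2+\gamma s^4)\}\,ds$ diverges, since the quartic with positive coefficient dominates for large $|s|$. The extension must be performed after replacing the quartic exponential by its truncated Taylor polynomial on the compact range, at which point the integrand is a polynomial times a Gaussian and the tail beyond $|s|\leq u^{8\delta}$ is indeed $O(e^{-cu^{16\delta}})$ as you state. This is the ordering the paper uses implicitly. With that fix the argument stands.
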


The proof of this lemma is elementary and is provided in the supplemental material.
We insert the result of the above lemma into the expression of $J_{1}$ term,
put $J_{1}$ and $J_{2}$ terms together, and obtain that on the set $\mathcal L_{u}^{\prime }$%
\begin{equation}\label{deno}
\int_{0}^{L}e^{\sigma \xi (x)}dx=\sqrt{\frac{2\pi }{\sigma \Delta (u-z)}}%
\exp \left\{ c_{\ast }-\frac{Ay^{2}}{4\Delta ^{3}(u-z)^{2}}+\frac{A}{8\Delta
^{2}\sigma (u-z)}+\omega (u)+o(u^{-1})\right\} .
\end{equation}
We now proceed to the analysis of \eqref{factor}.
Let
$${\tau_{\ast }=x_{\ast}+\gamma _{\ast },}$$
where $\gamma _{\ast }=u^{-1/2}\Delta ^{-1/2}\sigma ^{-1/2}$. For each {$x-\tau_{\ast }=O(u^{-1/2+16\delta})$}, we define  change of variable for $x$
\begin{equation}\label{gamma}
\gamma =x-x_{\ast}-\frac{y}{\Delta (u-z)}.
\end{equation}
Note that $\xi(x)$ is approximately a quadratic function with maximum at $x_{\ast}+\frac{y}{\Delta (u-z)}$. Thus, $\gamma$ is approximately the distance to the mode of $\xi(x)$.
Similar to the derivations of Lemma \ref{LemInt} and using the results in  \eqref{deno}, the following lemma provides an approximation of \eqref{factor}. The proof is provided in the supplemental material.

\begin{lemma}\label{Lemf1}
On the set $\mathcal L_u'$, we have that
\begin{eqnarray}\label{factor1}
F(x)-\frac{\int_{0}^{L}F(t)e^{\sigma \xi (t)}dt}{\int_{0}^{L}e^{\sigma \xi
(t)}dt}&=&p(x)\gamma \exp \Big[-\frac{p^{\prime }(x)}{2p(x)\gamma }(\gamma ^{2}+\frac{1%
}{\sigma \Delta (u-z)})+\frac{p^{\prime \prime }(x)}{6p(x)}(\gamma ^{2}+%
\frac{3}{\sigma \Delta (u-z)}) \notag\\
&&~~~~~~~~~~~~~~~~~~+\frac{Ay^{3}}{3\Delta ^{4}(u-z)^{3}\gamma }+o(u^{-1})+\omega (u)\Big].
\end{eqnarray}
\end{lemma}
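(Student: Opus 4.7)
The plan is to compute the ratio in \eqref{factor} by applying Laplace's method to both integrals, using the expansion \eqref{xi} of $\xi(x)$ already derived on the set $\mathcal L_u'$, and then exponentiating the resulting linear approximation. The denominator is already handled by \eqref{deno}, so the real work is approximating the numerator $\int_0^L(F(x)-F(t))e^{\sigma\xi(t)}dt$.

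First I would localize the numerator to $|t-x_*|<u^{-1/2+8\delta}$. Exactly as in the treatment of $J_2$ in the excerpt, Assumption A3 combined with $\sup_{|x|>u^{-1/2+8\delta}}(|g(x)|-\delta' u x^2)\le 0$ on $\mathcal L_u'$ gives an $e^{c_*-\varepsilon_0 u^{16\delta}}$ bound on the tail piece, and since $F$ is bounded on $[0,L]$ the outer part contributes only an exponentially smaller error than the denominator. On the dominating region I make the same change of variable $s=\sqrt{\Delta(u-z)}(t-\tau_*)$ as used in Lemma \ref{LemInt}, so that the quadratic piece in the exponent becomes $-\sigma s^2/2$ and $t-x = s/\sqrt{\Delta(u-z)}-\gamma$, where $\tau=x_*+y/(\Delta(u-z))$ and $\gamma=x-\tau$.

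Next I Taylor-expand $F(x)-F(t)=-p(x)(t-x)-\tfrac{p'(x)}{2}(t-x)^2-\tfrac{p''(x)}{6}(t-x)^3+O((t-x)^4)$ and substitute $t-x=s/\sqrt{\Delta(u-z)}-\gamma$. The key observation is that from \eqref{xi} the exponent on $s$ contains, besides $-\sigma s^2/2$, the linear perturbation $-\sigma\frac{Ay^3}{3\Delta^{7/2}(u-z)^{5/2}}s$; completing the square shifts the effective mean of $s$ by $s_0=-\frac{Ay^3}{3\Delta^{7/2}(u-z)^{5/2}}$ while leaving the normalization unchanged up to an error of order $u^{-2+24\delta}$. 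The quadratic correction $-\sigma \frac{Ay^2}{4\Delta^3(u-z)^2}s^2$ in the variance and the quartic $\tfrac{A}{24\Delta^2(u-z)}s^4$ contribute to the same $A/(8\Delta^2\sigma(u-z))$ factor that already appears in the denominator and thus cancel upon division; and the remainder $\omega(u)+o(u^{-1})$ absorbs the $g$- and $\vartheta$-pieces bounded through $\mathcal L_u'$ and Proposition \ref{PropG}. Then computing the Gaussian moments $E[s]=s_0$, $E[(s-s_0)^2]=1/\sigma$, $E[(s-s_0)^3]=0$ one obtains
\begin{align*}
\frac{\int_0^L (F(x)-F(t))e^{\sigma\xi(t)}dt}{\int_0^L e^{\sigma\xi(t)}dt}
&= p(x)\gamma+p(x)\frac{Ay^3}{3\Delta^4(u-z)^3}-\frac{p'(x)}{2}\Big(\gamma^2+\frac{1}{\sigma\Delta(u-z)}\Big)\\
&\quad +\frac{p''(x)}{6}\Big(\gamma^3+\frac{3\gamma}{\sigma\Delta(u-z)}\Big)+p(x)\gamma\,[o(u^{-1})+\omega(u)].
\end{align*}

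Finally I factor out $p(x)\gamma$ and use $1+a=e^{a+O(a^2)}$ with $a=O(u^{-1/2+16\delta})$, which yields the claimed exponential form. The main obstacle, which I expect to require the most care, is the bookkeeping of errors: the statement retains corrections of exact order $u^{-1}$ (the $p'$- and $p''$-terms, and the $Ay^3/(3\Delta^4(u-z)^3\gamma)$ mode-shift correction), so one must argue that (i) the cross terms between $s_0$ and higher moments, (ii) the $s^4$-contribution from the exponent, and (iii) the quartic Taylor remainder $O((t-x)^4)$ in $F$ all reduce to genuine $o(u^{-1})+\omega(u)$ after dividing by $p(x)\gamma$, uniformly in $(w,y,z)\in\mathcal L_u'$ and uniformly in $x$ with $|x-\tau_*|=O(u^{-1/2+16\delta})$. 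Once this uniformity is established, the remaining algebra is exactly parallel to Lemma \ref{LemInt}, and the result follows.
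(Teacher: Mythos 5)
Your proposal is correct and follows essentially the same route as the paper's own proof: localize to $|t-x_*|<u^{-1/2+8\delta}$, use the change of variable $s$ from \eqref{eqn:schange}, Taylor-expand $F(x)-F(t)$ to third order, evaluate the resulting Gaussian moments against the denominator from \eqref{deno}, and then factor out $p(x)\gamma$ and exponentiate. The only cosmetic difference is that you complete the square to shift the mean by $s_0$ before taking moments, whereas the paper Taylor-expands the exponential correction factor $1-\sigma\frac{Ay^3}{3\Delta^{7/2}(u-z)^{5/2}}s-\cdots$ and computes moments of $N(0,\sigma^{-1/2})$ directly; the two bookkeeping schemes produce identical terms, and your final pre-exponential expression matches the paper's.
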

%Recall from \eqref{expansion} that
%\begin{eqnarray*}
%\xi (x) &=&u+w+\frac{y^{2}}{2\Delta (u-z)}-\frac{\Delta (u-z)}{2}\left(
%x-x_{\ast}-\frac{y}{\Delta (u-z)}\right) ^{2} \\
%&&~~~-\frac{A}{6\Delta }y(x-x_{\ast})^{3}+\frac{Au}{24}(x-x_{\ast})^{4}+g(x-x_{\ast})+\vartheta (x-x_{\ast}).\notag
%\end{eqnarray*}%
We apply the change of variable in \eqref{gamma} to the representation of $\xi(x)$ in \eqref{expansion}
%\[
%\gamma =x- x_{\ast}-\frac{y}{\Delta (u-z)}
%\]%
and obtain that
\begin{eqnarray}\label{xi1}
\xi (x) &=&u+w+\frac{y^{2}}{2\Delta (u-z)}-\frac{\Delta (u-z)}{2}\gamma ^{2}
-\frac{A}{6\Delta }y(\gamma +\frac{y}{\Delta (u-z)})^{3}+\frac{Au}{24}%
(\gamma +\frac{y}{\Delta (u-z)})^{4} \notag\\
&&+g(x-x_{\ast})+\vartheta (x-x_{\ast}).
\end{eqnarray}%
We now put together \eqref{factor1} and \eqref{xi1} and obtain that for $|x-x_*|\leq u^{-1/2+8\delta}$
\begin{eqnarray}\label{dv}
v'(x)
%&=&e^{\sigma \xi (x)}\left[ F(x)-\frac{\int_{0}^{T}F(t)e^{\sigma \xi (t)}dt}{\int_{0}^{T}e^{\sigma \xi (t)}dt}\right] \\
&=&e^{\sigma u+\sigma w+\frac{\sigma y^{2}}{2\Delta (u-z)}}\times p(x)\gamma
\times e^{-\frac{\sigma \Delta u}{2}\gamma ^{2}} \\
&&\times \exp \Big\{\frac{\sigma \Delta z}{2}\gamma ^{2}-\frac{\sigma A}{6\Delta
}y(\gamma +\frac{y}{\Delta (u-z)})^{3}+\frac{\sigma Au}{24}(\gamma +\frac{y}{%
\Delta (u-z)})^{4} \notag\\
&&~~~~-\frac{p^{\prime }(x)}{2p(x)\gamma }(\gamma ^{2}+\frac{1}{\sigma \Delta
(u-z)})+\frac{p^{\prime \prime }(x)}{6p(x)}(\gamma ^{2}+\frac{3}{\sigma
\Delta (u-z)}) +\frac{Ay^{3}}{3\Delta ^{4}(u-z)^{3}\gamma }+o(u^{-1})+\omega (u)\Big\}.\notag
\end{eqnarray}%

\subsubsection{Step 2: the event $E_{1}=\{\max_{x\in \lbrack u^{-1/2+\protect\delta %
},L-u^{-1/2+\protect\delta }]}|v^{\prime }(x)|>b\}$}

By the definition of $u$ and the analytic form of \eqref{dv}, we have that
$
%e^{\sigma \xi (x)}\left[ F(x)-\frac{\int_{0}^{L}F(t)e^{\sigma \xi (t)}dt}{%
%\int_{0}^{L}e^{\sigma \xi (t)}dt}\right]
v'(x)\geq b=p(x_{\ast})\gamma _{\ast
}e^{\sigma u-\frac{\Delta \sigma u}{2}\gamma _{\ast }^{2}}.
$
 if and only if $\gamma >0$ and
\begin{equation}\label{11}
\begin{split}
&\sigma w+\frac{\sigma y^{2}}{2\Delta (u-z)}+\frac{\sigma \Delta z}{2}%
\gamma ^{2} -\frac{\sigma A}{6\Delta }y(\gamma +\frac{y}{\Delta (u-z)})^{3}+\frac{%
\sigma Au}{24}(\gamma +\frac{y}{\Delta (u-z)})^{4} 
\\
&-\frac{p^{\prime }(x)}{2p(x)\gamma }(\gamma ^{2}+\frac{1}{\sigma \Delta
(u-z)})+\frac{p^{\prime \prime }(x)}{6p(x)}(\gamma ^{2}+\frac{3}{\sigma
\Delta (u-z)}) \\
&+\frac{Ay^{3}}{3\Delta ^{4}(u-z)^{3}\gamma }+\log H(\gamma,u )-\log H(\gamma _{\ast},u)+\log \frac{p(x)}{p(x_{\ast})} 
\\
\geq & o(u^{-1})-\omega (u),
\end{split}
\end{equation}%
%where
%\[
%H(x)=\log \left( xe^{-\frac{\Delta \sigma u}{2}x^{2}}\right) .
%\]%
where $H$ is defined as in \eqref{HDef} and $\gamma_* = \frac{1}{\sqrt{\sigma \Delta u}}$.
We write the left-hand side of the above display as $R(\gamma ) + \log H(\gamma, u) - \log H(\gamma_*, u)$.
%in particular,
%\begin{eqnarray*}
%R(\gamma)&=&\sigma w+\frac{\sigma y^{2}}{2\Delta (u-z)}+\frac{\sigma \Delta z}{2}%
%\gamma ^{2} \\
%&&-\frac{\sigma A}{6\Delta }y(\gamma +\frac{y}{\Delta (u-z)})^{3}+\frac{%
%\sigma Au}{24}(\gamma +\frac{y}{\Delta (u-z)})^{4} \\
%&&-\frac{p^{\prime }(x)}{2p(x)\gamma }(\gamma ^{2}+\frac{1}{\sigma \Delta
%(u-z)})+\frac{p^{\prime \prime }(x)}{6p(x)}(\gamma ^{2}+\frac{3}{\sigma
%\Delta (u-z)}) \\
%&&+\frac{Ay^{3}}{3\Delta ^{4}(u-z)^{3}\gamma }+\log \frac{p(x)}{p(x_{\ast})} .
%\end{eqnarray*}%
Note that $\partial^2_\gamma \log H(\gamma _{\ast },u)=-2\Delta \sigma u$ and the derivative of the remainder term is $\partial_\gamma R(\gamma_*) = o(1) +O(z\gamma_*)$. Thus, $\log H(\gamma, u)$ dominates the variation.
In particular, the left-hand side of \eqref{11} is maximized at
$\gamma =\gamma _{\ast }+o(u^{-1})+O(z\gamma_*/u)=u^{-1/2}\Delta ^{-1/2}\sigma ^{-1/2}+ o(u^{-1}) +O(z\gamma_*/u),$
 equivalently, at $x=x_* + \gamma_*+ {y}/{\Delta (u-z)}+o(u^{-1})+O(z\gamma_*/u).$
 Therefore,
 $\max_{|\gamma|\leq u^{-1/2+8\delta}} R(\gamma ) + \log H(\gamma, u) - \log H(\gamma_*, u) = R(\gamma_*) + o(u^{-1})+O(z^2 /u^2) .$
 This is interpreted as
\[
\max_{|x-x_*|\leq u^{-1/2+8\delta}} v^{\prime }(x)\geq b
\]%
if and only if%
\begin{equation}
\label{mmA}
\begin{split}
\mathcal{A} \triangleq & \sigma w+\frac{\sigma y^{2}}{2\Delta (u-z)}+\frac{\sigma
\Delta z}{2}\gamma _{\ast }^{2} -\frac{\sigma A}{6\Delta }y(\gamma _{\ast }+\frac{y}{\Delta (u-z)})^{3}+%
\frac{\sigma Au}{24}(\gamma _{\ast }+\frac{y}{\Delta (u-z)})^{4} 
\\
&-\frac{p^{\prime }(x)}{2p(x)\gamma _{\ast }}(\gamma _{\ast }^{2}+\frac{1}{%
\sigma \Delta (u-z)})+\frac{p^{\prime \prime }(x)}{6p(x)}(\gamma _{\ast
}^{2}+\frac{3}{\sigma \Delta (u-z)}) 
\\
&+\frac{Ay^{3}}{3\Delta ^{4}(u-z)^{3}\gamma _{\ast }}+\log \frac{p(x_{\ast} + \gamma_*+\Delta ^{-1}(u-z)^{-1}y)}{p(x_{\ast})} +O(z^2 /u^2) 
\\
\geq & o(u^{-1})-\omega (u).
\end{split}
\end{equation}%

Note that on the region $|x-x_*|> u^{-1/2+8\delta}$ we need to consider the variation of $g(x-x_*)$. On the set $\mathcal L_u'$, the variation of $v'(x)$ is dominated by $\log H(\gamma, u)$.
In particular, on the set $|x-x_*|> u^{-1/2+8\delta}$,
$$\log H(\gamma, u) - \log H(\gamma_*, u) \leq - \varepsilon_0 u(\gamma - \gamma_*)^2.$$
Furthermore, on the set $\mathcal L_u'$, we have that
$\sup_{|x|>u^{-1/2+8\delta }}(|g(x)|-\delta ^{\prime }ux^{2}) <0.$
We can choose $\delta' < \varepsilon_0/2$, then $2|g(x)| < \log H(\gamma_*, u) - \log H(\gamma, u)$ for all $|x-x_*|> u^{-1/2+8\delta}$.
Thus, on the set $\mathcal L_u'$, the maximum of $v'(x)$ is attained on $|x-x_*|\leq u^{-1/2+8\delta}$, i.e.
$$\max_{[u^{-1/2 + \delta},L- u^{-1/2+\delta}]} v'(x) > b \quad \textrm{if and only if} \quad \mathcal A> o(u^{-1}) - \omega(u).$$
The following lemma simplifies the analytic form of $\mathcal A$. The proof is provided in the supplemental material.
\begin{lemma}\label{mmmA}
The expression $\mathcal A$ can be simplified to
\begin{eqnarray*}
\mathcal{A}
&=&\sigma w+\frac{\sigma y^{2}}{2\Delta u}+\frac{\sigma }{2\Delta u^{2}}%
y^{2}z+\frac{z}{2u}+\frac{A}{24\sigma \Delta ^{2}u}+\frac{p^{\prime \prime }(x_{\ast})}{6p(x_{\ast})\sigma \Delta u} \\
&&-\frac{\sigma Ay^{4}}{8\Delta u^{3}}+\frac{y^{2}}{u^{2}}(-\frac{A}{4\Delta
^{3}}+\frac{p^{\prime \prime }(x_{\ast})}{2p(x_{\ast})\Delta ^{2}}%
)+o(u^{-1}+y^2 u^{-2})+O(z^2 /u^2).
\end{eqnarray*}
\end{lemma}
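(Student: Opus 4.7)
The plan is purely computational: start from the expression for $\mathcal{A}$ displayed just before the lemma, substitute $\gamma_\ast^2=1/(\sigma\Delta u)$, and expand every quantity depending on $z/u$, $y/u$, or $\gamma_\ast$ as a Taylor/binomial series, keeping precisely those terms of magnitude larger than the allowed remainder $o(u^{-1}+y^2 u^{-2})+O(z^2/u^2)$ and discarding the rest. Throughout I will use that on $\mathcal{L}_u$ we have $|w|<u^{3\delta}$, $|y|<u^{1/2+4\delta}$, $|z|<u^{1/2+4\delta}$, $\gamma_\ast = O(u^{-1/2})$, together with $1/(u-z)=u^{-1}+zu^{-2}+O(z^2 u^{-3})$, so that $\alpha := y/[\Delta(u-z)] = y/(\Delta u)+yz/(\Delta u^2)+\cdots$.

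First I will dispatch the easy pieces. The term $\sigma w$ is kept as is. The second term gives $\frac{\sigma y^2}{2\Delta(u-z)}=\frac{\sigma y^2}{2\Delta u}+\frac{\sigma y^2 z}{2\Delta u^2}+\cdots$, producing the first two of the three leading contributions in the target. The term $\frac{\sigma\Delta z}{2}\gamma_\ast^2$ collapses immediately to $\frac{z}{2u}$. Next I use $p'(x_\ast)=0$ (since $x_\ast$ is an interior maximum of $|p|$), so that $p'(x)=p''(x_\ast)(x-x_\ast)+O((x-x_\ast)^2)$ with $x-x_\ast=\gamma_\ast+\alpha$; similarly $p''(x)=p''(x_\ast)+O(x-x_\ast)$. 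After Taylor expanding $p(x)/p(x_\ast)=1+\tfrac{p''(x_\ast)}{2p(x_\ast)}(\gamma_\ast+\alpha)^2+\cdots$, I combine the three $p$-dependent pieces
\[
-\frac{p'(x)}{2p(x)\gamma_\ast}\bigl(\gamma_\ast^2+\tfrac{1}{\sigma\Delta(u-z)}\bigr)+\frac{p''(x)}{6p(x)}\bigl(\gamma_\ast^2+\tfrac{3}{\sigma\Delta(u-z)}\bigr)+\log\frac{p(x_\ast+\gamma_\ast+\alpha)}{p(x_\ast)}.
\]
Using $\gamma_\ast^2+1/[\sigma\Delta(u-z)]=2\gamma_\ast^2+O(z/u^2)$ and $\gamma_\ast^2+3/[\sigma\Delta(u-z)]=4\gamma_\ast^2+O(z/u^2)$, the $p'$ and $p''$ combination contributes $\tfrac{p''(x_\ast)}{6p(x_\ast)\sigma\Delta u}$ at the $u^{-1}$ order, plus a $\tfrac{p''(x_\ast)}{2p(x_\ast)\Delta^2}\cdot\tfrac{y^2}{u^2}$ piece arising from the linear-in-$\alpha$ correction inside $p'(x)/\gamma_\ast$ paired with $\gamma_\ast$, which combines with the quadratic term of the $\log$-expansion.

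The remaining work concerns the $A$-dependent cubic and quartic terms. I will binomially expand
\[
(\gamma_\ast+\alpha)^3=\gamma_\ast^3+3\gamma_\ast^2\alpha+3\gamma_\ast\alpha^2+\alpha^3,\qquad (\gamma_\ast+\alpha)^4=\gamma_\ast^4+4\gamma_\ast^3\alpha+6\gamma_\ast^2\alpha^2+4\gamma_\ast\alpha^3+\alpha^4,
\]
multiply by $-\sigma Ay/(6\Delta)$ and $\sigma Au/24$ respectively, and match each product against $\gamma_\ast^2=1/(\sigma\Delta u)$. The pure $\gamma_\ast^4$ piece of the quartic yields $\tfrac{A}{24\sigma\Delta^2 u}$. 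Adding the $\gamma_\ast^2\alpha^2$ part of the quartic with coefficient $6$ to the $\gamma_\ast^2\alpha$ piece of the cubic with coefficient $3$ (and using $y\alpha = y^2/(\Delta u)(1+O(z/u))$) gives the $-\tfrac{A}{4\Delta^3}\cdot\tfrac{y^2}{u^2}$ contribution. Combining the $\alpha^4$ part of the quartic and the $\alpha^3$ part of the cubic produces the $y^4/u^3$ coefficient shown in the lemma. The additional term $\tfrac{Ay^3}{3\Delta^4(u-z)^3\gamma_\ast}$ cancels against a matching cross term generated in the cubic expansion once one writes $3\gamma_\ast\alpha^2$ explicitly.

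The main obstacle is not conceptual but bookkeeping: there are many $O(u^{-1+O(\delta)})$ cross terms arising from multiplying out the $(1+z/u+\cdots)$ expansions with the binomial coefficients, and one must verify that every such cross term either (i) matches a listed coefficient in the target, (ii) is $O(z^2/u^2)$ (in particular, terms of the shape $z^2 \cdot (\text{bounded in }u)$), or (iii) falls within $o(u^{-1}+y^2 u^{-2})$. Because the sizes $|y|<u^{1/2+4\delta}$ and $|z|<u^{1/2+4\delta}$ allow $\delta$ to be taken arbitrarily small, each such residual term can be shown to be absorbed by choosing $\delta$ small enough. The final step is to collect all kept contributions and check that they reproduce exactly the displayed formula for $\mathcal{A}$.
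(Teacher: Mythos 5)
Your plan is the same direct Taylor/binomial expansion the paper itself carries out: start from the display defining $\mathcal A$ in \eqref{mmA}, use $p'(x_\ast)=0$, expand $\log p$, expand $(\gamma_\ast+\alpha)^3$ and $(\gamma_\ast+\alpha)^4$, substitute $\gamma_\ast^2=1/(\sigma\Delta u)$, and track which cross terms cancel — so the overall method is correct and essentially identical to the paper's proof. One specific bookkeeping claim is off, however: the term $\tfrac{Ay^3}{3\Delta^4(u-z)^3\gamma_\ast}$ is not cancelled by the cubic's $3\gamma_\ast\alpha^2$ piece alone. That piece contributes $-\tfrac{\sigma Ay^3\gamma_\ast}{2\Delta^3(u-z)^2}$, and only after adding the quartic's $4\gamma_\ast\alpha^3$ contribution $\tfrac{\sigma Au\gamma_\ast}{6}\alpha^3\approx\tfrac{\sigma Ay^3\gamma_\ast}{6\Delta^3(u-z)^2}$ does one obtain $-\tfrac{\sigma Ay^3\gamma_\ast}{3\Delta^3(u-z)^2}$, which matches $\tfrac{Ay^3}{3\Delta^4(u-z)^3\gamma_\ast}$ up to an $O(z)$ correction; using the cubic alone leaves a residual of order $y^3u^{-5/2}$, which on $\mathcal L_u$ is $\Theta(u^{-1+12\delta})$ and hence not absorbable into $o(u^{-1}+y^2u^{-2})$. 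The rest of your term-matching (the $y^2/u^2$, $y^4/u^3$, and $p$-dependent pieces) is consistent with the paper's intermediate display \eqref{A}, so once this one cancellation is corrected the computation goes through.
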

%Note that the form of $\mathcal{A}$ in (\ref{A}) is an even function of $%
%\gamma _{\ast }$ (with the terms $y^{3}/\gamma _{\ast }$ and $y^{3}\gamma _{\ast }$ cancelling each other).
With exactly the same  development, we have
\[
\max_{x\in [u^{-1/2 + \delta}, L-u^{-1/2 + \delta} ]} [-v^{\prime }(x)]\geq b\quad
\mbox{if and only if}\quad
\mathcal{A}\geq o(u^{-1})+\omega (u).
\]
In fact, from the technical proof of Lemma \ref{mmmA}, we basically choose $\gamma = -\gamma_*+o(u^{-1})+O(z\gamma_*/u)$ and all the other derivations are the same.
We omit the repetitive details.
Thus, the event $E_1$ occurs if and  only if
$ \mathcal{A}\geq o(u^{-1})+\omega (u).$

\subsubsection{Step 3: evaluation of the integral in (\protect\ref{intt})} \label{SecInt}

\begin{lemma}
\label{LemDen} The random vector $(\xi (x),\xi ^{\prime }(x),\xi ^{\prime
\prime }(x))$ is a multivariate Gaussian random vector with mean zero and
covariance matrix
\[
\left(
\begin{array}{ccc}
1 & 0 & -\Delta \\
0 & \Delta & 0 \\
-\Delta & 0 & A%
\end{array}%
\right)
\]%
The density of $(\xi (x),\xi ^{\prime }(x),\xi ^{\prime \prime }(x))$
evaluated at $(u+w,y,-\Delta (u-z))$ is%
\[
h(w,y,z)= \frac{1}{(2\pi )^{3/2}\sqrt{\Delta(A-\Delta ^{2})}}\exp \left\{ -\frac{1}{2}%
S(w,y,z)\right\} ,
\]%
where
$
S(w,y,z)=u^{2}+w^{2}+\frac{\Delta ^{2}(w+z)^{2}}{A-\Delta ^{2}}+2u(w+\frac{%
y^{2}}{2\Delta u}).
$
\end{lemma}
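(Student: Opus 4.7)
The lemma is a direct Gaussian calculation built from the covariance expansion in Assumption A2, so the plan is mostly bookkeeping rather than any substantial new idea.

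First I would establish the covariance matrix. Strong stationarity with $E\xi(x) = 0$ means $(\xi(x), \xi'(x), \xi''(x))$ has mean zero and a distribution independent of $x$, so it suffices to compute the six covariances from derivatives of $C$ at the origin. Using that, for a stationary process, $\mathrm{Cov}(\xi^{(i)}(x), \xi^{(j)}(x)) = (-1)^{i} C^{(i+j)}(0)$, I read off from the expansion
\[
C(x) = 1 - \tfrac{\Delta}{2}x^2 + \tfrac{A}{24}x^4 - Bx^6 + o(x^6)
\]
that $C(0)=1$, $C''(0)=-\Delta$, $C^{(4)}(0)=A$, while $C'(0)=C'''(0)=0$. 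This produces exactly the three nontrivial entries $1$, $\Delta$, $-\Delta$, $A$ and annihilates $\mathrm{Cov}(\xi,\xi')$ and $\mathrm{Cov}(\xi',\xi'')$, yielding the claimed matrix.

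Next I would compute the normalizing constant by taking the determinant. Because $\xi'(x)$ is uncorrelated with both $\xi(x)$ and $\xi''(x)$, the covariance matrix block-decomposes as the $2\times 2$ block $\bigl(\begin{smallmatrix}1 & -\Delta\\ -\Delta & A\end{smallmatrix}\bigr)$ plus the scalar $\Delta$ for $\xi'$. The determinant is $\Delta(A-\Delta^2)$, which yields the prefactor $(2\pi)^{-3/2}(\Delta(A-\Delta^2))^{-1/2}$ stated in the lemma. Note that $A>\Delta^2$ is guaranteed by Cauchy–Schwarz applied to $\mathrm{Cov}(\xi,\xi'')$, so the square root is real.

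The final step is to expand the quadratic form at the point $(u+w, y, -\Delta(u-z))$. Using the same block structure, the inverse covariance is $\tfrac{1}{A-\Delta^2}\bigl(\begin{smallmatrix} A & \Delta \\ \Delta & 1\end{smallmatrix}\bigr)$ on the $(\xi,\xi'')$ block and $1/\Delta$ on the $\xi'$ slot. Plugging in and writing the $(\xi,\xi'')$-contribution as $A(u+w)^2 - 2\Delta^2 (u+w)(u-z) + \Delta^2 (u-z)^2$, I would split off $(A-\Delta^2)(u+w)^2$ to leave $\Delta^2 \bigl[(u+w)-(u-z)\bigr]^2 = \Delta^2(w+z)^2$; dividing by $A-\Delta^2$ and adding the $y^2/\Delta$ term from $\xi'$ produces
\[
(u+w)^2 + \frac{\Delta^2(w+z)^2}{A-\Delta^2} + \frac{y^2}{\Delta}.
\]
Expanding $(u+w)^2 = u^2 + 2uw + w^2$ and regrouping $2uw + y^2/\Delta = 2u\bigl(w + \tfrac{y^2}{2\Delta u}\bigr)$ yields exactly $S(w,y,z)$ as displayed, completing the proof. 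There is no real obstacle here; the only care is verifying the inverse of the $2\times2$ block and tracking the sign inside $-\Delta(u-z)$.
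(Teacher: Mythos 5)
Your proof is correct, and since the paper omits this proof as ``elementary'' (citing Chapter 5.5 of Adler--Taylor), your argument is exactly the standard derivation it has in mind: read off the spectral moments $C''(0)=-\Delta$, $C^{(4)}(0)=A$, exploit the block structure from the vanishing odd derivatives, invert the $2\times 2$ block, and complete the square. All the algebra checks out, including the regrouping $2uw + y^2/\Delta = 2u(w + y^2/(2\Delta u))$ and the observation that $A>\Delta^2$ follows from Cauchy--Schwarz.
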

The proof of the above lemma is elementary and therefore is omitted; see also Chapter 5.5 in \cite{AdlTay07}.
We insert the expression of $\mathcal A$ in Lemma \ref{mmmA} to the exponent of the density function%
\begin{eqnarray}\label{SS}
S(w,y,z) &=&u^{2}+w^{2}+\frac{\Delta ^{2}(w+z)^{2}}{A-\Delta ^{2}}+2u\left(w+%
\frac{y^{2}}{2\Delta u} \right ) \\
&=&u^{2}+w^{2}+\frac{\Delta ^{2}(w+z)^{2}}{A-\Delta ^{2}} +2u\bigg[\frac{\mathcal{A}}{\sigma }-\frac{y^{2}z}{2\Delta u^{2}}-\frac{z}{%
2\sigma u}-\frac{A}{24\sigma ^{2}\Delta ^{2}u}-\frac{p^{\prime \prime
}(x_{\ast})}{6p(x_{\ast})\sigma ^{2}\Delta u} \notag\\
&&~~~~~~~~+\frac{Ay^{4}}{8\Delta ^{4}u^{3}}-\frac{y^{2}}{u^{2}}
\left (-\frac{A}{4\sigma
\Delta ^{3}}+\frac{p^{\prime \prime }(x_{\ast})}{2p(x_{\ast})\sigma \Delta
^{2}}\right)+o(u^{-1}+y^2 u^{-2})+O(z^2 /u^2)\bigg].\notag
\end{eqnarray}%
Furhtermore, we
construct a dominating function preparing for the application of the dominated convergence theorem
\begin{eqnarray*}
S(w,y,z) &=&u^{2}+2u\mathcal{A}/\sigma +\frac{(\sqrt{A}w+\Delta
^{2}A^{-1/2}z)^{2}}{A-\Delta ^{2}}+\frac{\Delta ^{2}}{A}z^{2} \\
&&-\frac{y^{2}z}{\Delta u}-\frac{z}{\sigma }+\frac{A}{4\Delta ^{4}u^{2}}%
y^{4}-\frac{y^{2}}{u}(-\frac{A}{2\sigma \Delta ^{3}}+\frac{p^{\prime \prime
}(x _{\ast })}{p(x_{\ast })\sigma \Delta ^{2}})+ o(y^2/u) + O(z^2/u)+O(1)\\
&=&u^{2}+2u\mathcal{A}/\sigma  +\frac{(\sqrt{A}w+\Delta ^{2}A^{-1/2}z)^{2}}{A-\Delta ^{2}}+\frac{\Delta
^{2}}{A}\Big(\frac{A}{2\Delta ^{3}}\frac{y^{2}} u-z\Big)^{2}\\
&&+\frac{1}{\sigma }\Big(\frac{A}{2\Delta ^{3}}\frac{y^{2}} u-z\Big)-\frac{p^{\prime \prime }(x_{\ast })}{p(x_{\ast })\sigma \Delta ^{2}}\frac{y^{2}}u
+o(y^2/u) + O(z^2/u)+O(1) .
\end{eqnarray*}%
Note that, on the set $\mathcal L_u'$, $o(y^2/u) + O(z^2/u) = o(y^2/u +z)$ and thus,
$$S(w,y,z)\geq u^{2}+2u\mathcal{A}/\sigma +\frac{\Delta
^{2}}{A}\Big(\frac{A}{2\Delta ^{3}}\frac{y^{2}} u-z\Big)^{2}+\frac{1+o(1)}{\sigma }\Big(\frac{A}{2\Delta ^{3}}\frac{y^{2}} u-z\Big)-\frac{p^{\prime \prime }(x_{\ast })}{p(x_{\ast })\sigma \Delta ^{2}}\frac{y^{2}}u+O(1).$$
It is useful to keep in mind that $p''(x_*)<0$.
Let $\mathcal{A}_{u}=u\mathcal{A}$. Note that for each fixed $(\mathcal{A}%
_{u},y,z)$, $w\rightarrow 0$ as $u\rightarrow \infty $. Furthermore,
notice that $\omega (u)=O(\sup_{|x|\leq u^{-1/2+8\delta }}|g(x)|) = O_p(u^{-3/2+24\delta})$.
We consider change of variable from $(w,y,z)$ to $(\mathcal{A}_{u},y,z)$.
By the dominated convergence theorem and \eqref{SS}, we obtain that%
\begin{eqnarray*}
&&\Delta\int_{\mathcal L_{u}}P\left( E_1, \mathcal L_{u}^{\prime }|w,y,z\right) h(w,y,z)dwdydz \\
&=&\frac{\sqrt\Delta }{(2\pi )^{3/2}\sqrt{A-\Delta ^{2}}} \times \int_{\mathcal L_{u}}P\left(\mathcal{A}>\omega (u), \mathcal L_{u}^{\prime } \vert \,  w, y,z\right) e^{-\frac{1}{2}S(w,y,z)}dwdydz \\
&\sim&\frac{\sqrt\Delta }{(2\pi )^{3/2}\sqrt{A-\Delta ^{2}}} \times \int_{\mathcal L_{u}} I  \left(\mathcal{A}_u>0\right) e^{-\frac{1}{2}S(w,y,z)}\frac{d\mathcal A_u}{\sigma u}dydz
\end{eqnarray*}
For the last step, we use the fact that $P(\mathcal L_u'|w,y,z)\rightarrow 1$ and $P( \mathcal A > \omega(u), \mathcal L_{u}^{\prime }|w,y,z)\to I(\mathcal{A}_u>0)$ as $u\to \infty$.
We insert the expression $S(w,y,z)$ as in \eqref{SS} and set $w=0$ (by the dominated convergence theorem and the fact that for fixed $\mathcal A_u$, $y$, and $z$, we have $w\to 0$ as $u\to \infty$). The above the display is
\begin{eqnarray*}
&\sim&\frac{\sqrt\Delta }{(2\pi )^{3/2}\sqrt{A-\Delta ^{2}}}u^{-1}e^{-u^{2}/2+%
\frac{A}{24\sigma ^{2}\Delta ^{2}}+\frac{p^{\prime \prime }(x_{\ast})}{%
6p(x_{\ast})\sigma ^{2}\Delta }} \times \int_{0}^\infty \frac 1 \sigma e^{-\mathcal A_u/\sigma}d\mathcal A_u\\
&&\times \int\exp 
\left(
-\frac{1}{2}\left[
\frac{\Delta
^{2}z^{2}}{A-\Delta ^{2}}-\frac{z}{\sigma }-\frac{y^{2}z}{\Delta u} +\frac{A}{4\Delta ^{4}}\frac{y^{4}}{u^{2}}-\frac{y^{2}}
{u} \left(-\frac{A}{%
2\sigma \Delta ^{3}}+\frac{p^{\prime \prime }(x _{\ast })}{p(x_{\ast })\sigma \Delta ^{2}}
\right)
\right]
\right ) dydz.
\end{eqnarray*}%
We use the change of variable that $y_{u}=u^{-1/2}y$%
\begin{eqnarray}
&\sim&\frac{\sqrt\Delta }{(2\pi )^{3/2}\sqrt{A-\Delta ^{2}}}e^{\frac{A}{%
24\sigma ^{2}\Delta ^{2}}+\frac{p^{\prime \prime }(x_{\ast})}{6p(x_{\ast
})\sigma ^{2}\Delta }}u^{-1/2}e^{-u^{2}/2}  \label{int} \\
&&\times \int\exp \left ( 
-\frac{1}{2}
\left[
\frac{\Delta ^{2}z^{2}}{A-\Delta ^{2}}-\frac{z}{\sigma }-\frac{%
y_u^{2}z}{\Delta } + \frac{A}{4\Delta ^{4}}y_u^{4}-y_u^{2}
\left(-\frac{A}{2\sigma \Delta ^{3}}+\frac{p^{\prime \prime }(x _{\ast })}{p(x _{\ast })\sigma \Delta ^{2}}%
\right)
\right]
\right) dy_udz  \nonumber\\
&=& D\times u^{-1/2} e^{-u^2/2}.\notag
\end{eqnarray}%
This corresponds to the first term of the approximation in the statement of the theorem.
%
%
%
%
%
%
%Notice that%
%\begin{eqnarray*}
%&&\frac{\Delta ^{2}z^{2}}{A-\Delta ^{2}}-\frac{z}{\sigma }-\frac{y^{2}z}{%
%\Delta }+\frac{A}{4\Delta ^{4}}y^{4}-y^{2}(-\frac{A}{2\sigma \Delta ^{3}}+%
%\frac{p^{\prime \prime }(\gamma _{\ast })}{p(\gamma _{\ast })\sigma \Delta
%^{2}}) \\
%&=&\frac{\Delta ^{4}}{A(A-\Delta ^{2})}z^{2}+\frac{\Delta ^{2}}{A}(\frac{A}{%
%2\Delta ^{3}}y^{2}-z)^{2}+\frac{1}{\sigma }(\frac{A}{2\Delta ^{3}}y^{2}-z)-%
%\frac{p^{\prime \prime }(\gamma _{\ast })}{p(\gamma _{\ast })\sigma \Delta
%^{2}}y^{2}.
%\end{eqnarray*}%
%Therefore, the integral in (\ref{int}) is finite.

\subsection{The approximation of $P(E_{3})\label{SecE3}$}

The analysis of $P(E_{2})$ and $P(E_{3})$ are completely analogous.
Therefore, we only provide the derivation for $P(E_{3})$. The difference between the analyses of $P(E_{3})$ and $P(E_{1})$ is that the integrals in the
factor (\ref{factor}) are truncated by the boundary and therefore most of the
calculations are related to conditional Gaussian distributions. We redefine some notation. Let $u_L$ and $\zeta_L$ be defined as in Section \ref{SecThm} prior to the statement of the theorem.
We first define
$t_{L}=L-\frac{\zeta _{L}}{\sqrt{\Delta \sigma u_{L}}}$
that is the location where $\xi(x)$ is likely  to have a high excursion given that $v'(x)$ has a high excursion at the right boundary $L$. We will perform Taylor expansion by conditioning on the field at $t_L$. We  redefine the notation $(w,y,z)$ as
$\xi (t_{L})=u_L+w$, $\xi ^{\prime }(t_{L})=y$, and $\xi ^{\prime \prime
}(t_{L})=-\Delta (u_{L}-z).$
Furthermore, we consider the following change of variables ``$\gamma$'' and ``$s$''
\begin{equation}\label{change}
x=\gamma +t_{L}+\frac{y}{\Delta (u_{L}-z)},\quad t=t_{L}+\frac{y}{\Delta (u_{L}-z)}+\frac{s}{\sqrt{\Delta (u_{L}-z)}}.
\end{equation}
With simple calculations, we have that
\begin{equation}\label{boundary}
t\leq L\Longleftrightarrow s\leq \sqrt{\frac{(1-z/u_L)}{\sigma }}\zeta _{L}-%
\frac{y}{\sqrt{\Delta (u_L-z)}}.
\end{equation}
Furthermore, it is useful to keep in mind that $v'(x)$ is maximized when $\gamma$ is of order $u_L^{-1/2}$.
Let $g(x)$ be the remainder process such that
$\xi(x) = E(\xi(x) | w,y,z) + g(x - t_L).$
Similar to the analysis of $P(E_1)$, we first localize the event via the following proposition.

\begin{proposition}
\label{PropLocal2} Using the notations in Theorem \ref{ThmMain}, under conditions A1 - A3, consider%
\begin{eqnarray*}
\mathcal C_{u_L}&=&\{|w|>u_{L}^{3\delta }\}\bigcup \{|y|>u_{L}^{1/2+4\delta}\}\bigcup \{|z|>u_{L}^{1/2+4\delta }\}\\
&&~~\bigcup\Big\{\sup_{|x|>u_L^{-1/2+8\delta }}[|g(x)|-\delta^{\prime }u_Lx^{2}]>0\Big\}
\bigcup \Big \{\sup_{|x|\leq u_L^{-1/2+8\delta }}|g(x)|>u_L^{-1/2+\delta ^{\prime}} \Big\}
\end{eqnarray*}
Then, for any $\delta >0$ and $\delta ^{\prime }>24\delta $, we have that
$P(\mathcal C_{u_L};E_{3})=o(u_{L}^{-1}e^{-u_{L}^{2}/2}).$
\end{proposition}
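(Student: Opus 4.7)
The proposition is the boundary analog of Propositions \ref{PropLocal} and \ref{PropG} combined, so the plan is to transport the interior argument to a neighborhood of $L$ by conditioning on $\xi$ and its first two derivatives at the boundary proxy $t_L = L - \zeta_L/\sqrt{\Delta\sigma u_L}$. Write $\xi(x) = \bar E(x; w, y, z) + g(x - t_L)$ where $\bar E(x; w, y, z) \triangleq E[\xi(x)\mid \xi(t_L)=u_L+w,\ \xi'(t_L)=y,\ \xi''(t_L) = -\Delta(u_L - z)]$ and $g$ is a mean-zero Gaussian process with $\mathrm{Var}(g(x)) = O(|x|^6)$ that is independent of $(w, y, z)$. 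This representation is legitimate because, by the heuristics of Section \ref{SecHeu} adapted to the boundary, high excursion of $v'(x)$ on $[L - u_L^{-1/2+\delta}, L]$ forces $\xi$ to climb to a level near $u_L$ at a point within $O(u_L^{-1/2})$ of $t_L$; optimization of $H_L(x,\zeta;u_L)$ pinpoints $t_L$ as the natural conditioning location.

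The strategy is then a union bound over the five bad events defining $\mathcal C_{u_L}$, each intersected with $E_3$. For $\{|w| > u_L^{3\delta}\}$ I would use the marginal density of $\xi(t_L)$: on $E_3$, the argument leading to Theorem \ref{ThmMain} shows $\xi(t_L) \approx u_L + O(1)$ with overwhelming probability, so the Gaussian factor $e^{-(u_L + w)^2/2}$ integrated over $|w| > u_L^{3\delta}$ yields an extra decay of order $e^{-u_L^{6\delta}/2}$ beyond the base rate $u_L^{-1} e^{-u_L^2/2}$. For the events $\{|y| > u_L^{1/2+4\delta}\}$ and $\{|z| > u_L^{1/2+4\delta}\}$, I would invoke the joint density of Lemma \ref{LemDen} (which is valid at $t_L$ by stationarity under A1), obtaining the quadratic form $w^2 + \Delta^2(w+z)^2/(A-\Delta^2) + y^2/\Delta$ in the exponent; the tails beyond $u_L^{1/2+4\delta}$ contribute $e^{-c\, u_L^{1+8\delta}}$, which is $o(u_L^{-1}e^{-u_L^2/2})$.

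The supremum-of-$g$ events require the Borel--TIS inequality (Lemma \ref{LemBorel}) applied to the (renormalized) Gaussian process $g$. Because $g$ is independent of $(w,y,z)$, the tail bound decouples cleanly from the conditional density. Since $\mathrm{Var}(g(x)) = O(|x|^6)$ by three-time differentiability in A2, on $|x| \leq u_L^{-1/2+8\delta}$ one has $\sup |g(x)| = O_p(u_L^{-3/2+24\delta})$; the event $\{\sup_{|x|\leq u_L^{-1/2+8\delta}} |g(x)| > u_L^{-1/2+\delta'}\}$ with $\delta' > 24\delta$ is then a deviation of order $u_L^{1+O(\delta)}$ standard deviations, yielding super-exponential decay via Borel--TIS. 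The event $\{\sup_{|x|>u_L^{-1/2+8\delta}}[|g(x)| - \delta' u_L x^2] > 0\}$ is handled by partitioning the domain into dyadic shells $\{2^{-k-1} \leq |x| \leq 2^{-k}\}$, bounding $\sup |g(x)|$ on each shell via Borel--TIS (using $\mathrm{Var}(g) \leq Cx^6$ and the expected supremum bound $E\sup|g| \leq C|x|^3\sqrt{\log(1/|x|)}$), and summing a geometric series; the threshold $\delta' u_L x^2$ dominates $|x|^3$ times logarithmic factors for $|x| > u_L^{-1/2+8\delta}$, giving the required $o(u_L^{-1}e^{-u_L^2/2})$ decay.

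The main technical obstacle is the careful treatment of the boundary-truncated integrals $\int_0^L e^{\sigma \xi(t)}\, dt$ and $\int_0^L F(t) e^{\sigma\xi(t)}\, dt$ in the representation \eqref{factor} when conditioning at $t_L$, which sits at distance $O(u_L^{-1/2})$ from $L$. Unlike the interior case where Laplace's method produces full Gaussian integrals, here the integration domain is truncated at $L$, introducing conditional-Gaussian factors $E(\cdot \mid Z \leq \zeta)$ as in the definition of $H_L$. The change of variable \eqref{change} together with \eqref{boundary} makes this precise: the upper limit $s \leq \sqrt{(1-z/u_L)/\sigma}\,\zeta_L - y/\sqrt{\Delta(u_L - z)}$ remains bounded and nondegenerate throughout the range of $(w, y, z)$ allowed by the complement of $\mathcal C_{u_L}$, so the Laplace-type expansions used for $P(E_1)$ carry over with conditional Gaussian moments replacing full ones. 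Once this is verified, the remainder of the bookkeeping is a direct transcription of the argument for Propositions \ref{PropLocal} and \ref{PropG}.
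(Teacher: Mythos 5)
Your overall plan (condition at $t_L$, decompose $\xi = \bar E + g$, union bound over the five bad events, treat the $g$-events via Borel--TIS and independence from $(w,y,z)$) matches the paper's stated strategy, which is to transport the argument for Proposition \ref{PropLocal} (Lemmas \ref{LemL1}, \ref{LemL11}, \ref{LemL2}) and Proposition \ref{PropG} to the boundary. However, there is a concrete gap in how you handle the event $\{|w| > u_L^{3\delta}\}$, and you omit the change-of-measure step that the paper says is part of the proof.

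Your argument for $\{|w|>u_L^{3\delta}\}$ rests on the claim that ``the Gaussian factor $e^{-(u_L+w)^2/2}$ integrated over $|w|>u_L^{3\delta}$ yields an extra decay of order $e^{-u_L^{6\delta}/2}$.'' This works only on the half $w > u_L^{3\delta}$, where the density is smaller than at $w=0$. On the half $w < -u_L^{3\delta}$ the density of $\xi(t_L)$ is \emph{larger} (of order $e^{-u_L^2/2 + u_L^{1+3\delta}}$), and your bound goes in the wrong direction. The paper's Lemma \ref{LemL2} handles this differently: $E_3$ (through $|v'(x)| = e^{\sigma\xi(x)}\cdot[\text{bounded factor}]$) forces $\sup_{[L-u_L^{-1/2+\delta},L]}\xi > u_L - O(\log u_L)$, and $|w|>u_L^{3\delta}$ then forces a discrepancy of size $\gtrsim u_L^{3\delta}$ between $\xi(t_L)$ and the sup of $\xi$ over an interval of width $u_L^{-1/2+\delta}$ containing $t_L$; Borel--TIS applied to this discrepancy gives the needed $o(u_L^{-1}e^{-u_L^2/2})$. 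Your proposal attributes Borel--TIS only to the $g$-events, which is not where the load-bearing application for $|w|$ sits.

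Separately, the paper's proof remark explicitly says that Propositions \ref{PropLocal2} and \ref{PropLocal1} use ``a repeated application of Borel--TIS lemma and the change of measure $Q_\zeta$.'' The $Q_\zeta$ step (Lemmas \ref{LemL1}--\ref{LemL11} in the proof of Proposition \ref{PropLocal}) is what controls the possibility that $\xi$ reaches a near-peak level at a point distinctly away from the designated conditioning location --- here, at distance $\gg u_L^{-1/2}$ from $t_L$ but still inside $[L - u_L^{-1/2+\delta}, L]$ --- before invoking the Borel--TIS step on the good set $J_b^c$. Your write-up does not mention $Q_\zeta$ at all and instead asserts that ``the bookkeeping is a direct transcription.'' If you want to argue that $Q_\zeta$ is dispensable at the boundary (e.g.~by leaning on the $g$-event $\{\sup_{|x|>u_L^{-1/2+8\delta}}[|g(x)|-\delta' u_L x^2]>0\}$ to cover deviations far from $t_L$), you must make that argument explicit and show how the $g$-bound and the $(w,y,z)$-bound interlock without circularity, rather than appealing to the interior case wholesale.
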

Let $\mathcal L_{u_L}^*= \mathcal C_{u_L}^c$ and we only need to consider $P(\mathcal L_{u_L}^*,E_{3})$.
With a similar derivation as that for $P(E_{1})$, the following lemma provides an estimate of
${\int_{0}^{L}(F(x)-F(t))e^{\sigma \xi(t)}dt}/{\int_{0}^{L}e^{\sigma \xi(t)}dt}.$
The proof is provided in the supplemental material.

\begin{lemma}\label{Lemf2}
On the set $\mathcal L_{u_L}^*$, we have that
\begin{equation}
\label{f2}
\begin{split}
&\frac{\int_{0}^{L}(F(x)-F(t))e^{\sigma \xi(t)}dt}{\int_{0}^{L}e^{\sigma \xi(t)}dt} =
\frac{1}{\sqrt{\Delta \sigma u_{L}}} \times \exp \left( \frac{z}{2u_{L}}-\frac{A}{24\Delta ^{2}\sigma u_{L}}E\left( Z^{4}|Z\leq \zeta
_{L}\right) +\lambda (u_{L})+\omega (u_{L})\right) \times
\\
& \left\{E\left[p(x)(\gamma \sqrt{\sigma \Delta (u_{L}-z)}-Z)
-\frac{p'(x)}{2\sqrt{\sigma \Delta u_{L}}}(\gamma \sqrt{\sigma \Delta (u_{L}-z)}-Z)^{2} ~\left \vert~ Z\leq \sqrt{1-\frac{z}{u_{L}}}\zeta _{L}-\sqrt{\frac{\sigma }{\Delta (u_{L}-z)}}y \right. \right]\right. 
\\
&~~+E\left. \left[\frac{p^{\prime
\prime }(x)}{6\sigma \Delta u_{L}}(\gamma \sqrt{\sigma \Delta u_{L}}-Z)^{3}
    +\frac{Ap(x)}{24\Delta ^{2}\sigma ^{2}u_{L}}Z^{4}(\gamma \sqrt{\sigma \Delta u_{L}}-Z)~\left \vert ~Z\leq \sqrt{1-\frac{z}{u_{L}}}\zeta _{L}-\sqrt{%
\frac{\sigma }{\Delta (u_{L}-z)}}y \right. \right]\right\}
\end{split}
\end{equation}%
where $\lambda (u_{L})=O(y^{3}/u_{L}^{5/2}+y^{2}/u_{L}^{2} + y /u^{3/2})+o(u_{L}^{-1}+u_{L}^{-1}z),$ $\omega (u)=O(\sup_{|x|\leq u^{-1/2+8\delta }}|g(x)|),$
and  $Z$ is a standard Gaussian random variable.
\end{lemma}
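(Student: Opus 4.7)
}
The plan is to mirror the derivation of Lemma \ref{Lemf1}, but with the crucial modification that the integration domain $t \in [0,L]$ is truncated from above at $L$. This truncation is precisely what turns ordinary Gaussian integrals into \emph{conditional} ones.

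\textbf{Step 1 (conditional Taylor expansion).} Working on $\mathcal L_{u_L}^*$, condition on $(\xi(t_L),\xi'(t_L),\xi''(t_L)) = (u_L+w, y, -\Delta(u_L-z))$ and perform the conditional Taylor expansion of $\xi(t)$ around $t_L$, using the same formulas for the conditional derivatives as in \eqref{expansion}. Introduce the remainder field $g(t-t_L)$ and the negligible term $\vartheta(t-t_L)$ exactly as before. This gives an expansion of $\xi(t)$ analogous to \eqref{xi}, but now centered at $t_L$ instead of $x_\ast$.

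\textbf{Step 2 (change of variable and identification of the truncation).} Apply the change of variables $t \mapsto s$ in \eqref{change}. Under this map the quadratic part of $\xi(t)$ becomes $-s^2/2$ (in $\sigma$-scale) and, by \eqref{boundary}, the upper endpoint $t=L$ corresponds to
\[
s \leq \zeta_L^\ast := \sqrt{(1-z/u_L)/\sigma}\,\zeta_L - y/\sqrt{\Delta(u_L-z)}.
\]
Since $t_L$ is an $O(u_L^{-1/2})$ distance from $L$ and far from $0$, the lower endpoint $t=0$ maps to an essentially infinite negative value and the contribution of $t$ near $0$ is exponentially negligible by the same tail bound used for $J_2$ in \eqref{split}.

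\textbf{Step 3 (denominator as a truncated Gaussian normalizer).} Split $\int_0^L e^{\sigma\xi(t)}dt$ exactly as in \eqref{split} into a core part $|t-t_L| < u_L^{-1/2+8\delta}$ and a negligible tail. On $\mathcal L_{u_L}^\ast$ the $g$ and $\vartheta$ terms contribute only $\omega(u_L) + o(u_L^{-1})$ to the exponent. After the $s$-substitution and rescaling $Z = s\sqrt\sigma$, the core integral reduces to a truncated Gaussian integral
\[
\sqrt{\tfrac{2\pi}{\sigma\Delta(u_L-z)}}\;\exp\!\bigl\{c_\ast + \text{corrections}\bigr\}\;P(Z \leq \zeta_L^\ast),
\]
where the corrections include the $A/(8\Delta^2\sigma u_L)$-type contribution from the $s^4$ term in the expansion of $\xi$.

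\textbf{Step 4 (numerator via conditional expectations).} Expand $F(x) - F(t) = p(x)(x-t) - \tfrac{p'(x)}{2}(x-t)^2 + \tfrac{p''(x)}{6}(x-t)^3 + O((x-t)^4)$. Using $x - t = \gamma - s/\sqrt{\Delta(u_L-z)}$, substitute and integrate over $t \in [0,L]$. Each polynomial power of $(x-t)$, weighted against $e^{-\sigma s^2/2}$ and the truncation $s \leq \sqrt{\sigma}\zeta_L^\ast$, becomes a truncated Gaussian moment. After rescaling $Z = s\sqrt\sigma$, this produces expressions of the form $E\bigl[(\gamma\sqrt{\sigma\Delta(u_L-z)} - Z)^k \,\big|\, Z \leq \zeta_L^\ast\bigr]$.

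\textbf{Step 5 (take the ratio and assemble).} Dividing numerator by denominator cancels $e^{c_\ast}$ and $P(Z \leq \zeta_L^\ast)$, leaving the prefactor $1/\sqrt{\Delta\sigma u_L}$ (after using $u_L - z = u_L(1+O(z/u_L))$) multiplied by the conditional expectations displayed in \eqref{f2}. The cross term between the $s^4$-correction in $\xi(t)$ and the leading Taylor term of $F(x)-F(t)$ produces the $\tfrac{Ap(x)}{24\Delta^2\sigma^2 u_L}E[Z^4(\gamma\sqrt{\sigma\Delta u_L}-Z)\mid Z\leq \zeta_L]$ term, while the stand-alone $s^4$-correction contributes $-\tfrac{A}{24\Delta^2\sigma u_L}E(Z^4\mid Z\leq \zeta_L)$ inside the outer exponential. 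All other residuals are absorbed into $\lambda(u_L)$ and $\omega(u_L)$ by counting powers of $y,z,u_L^{-1}$.

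\textbf{Main obstacle.} The interior case (Lemma \ref{Lemf1}) was simplified by the unconditional Gaussian structure, where odd moments vanished by symmetry and the Laplace approximation produced a tidy closed form. Here, all relevant moments of the truncated Gaussian $Z\mid Z\leq \zeta_L^\ast$ are nonzero, so every term in the expansion of both $\xi(t)$ and $F(x)-F(t)$ must be carried explicitly through Step 4. The delicate part is the careful bookkeeping that separates the contributions that survive into the displayed formula (the $p'$ term, the $p''$ cubic correction, and the $Z^4$ correction) from those that are subleading; in particular one must verify that the random upper limit $\zeta_L^\ast$ can be replaced by $\zeta_L$ in the final leading-order expression, with the resulting discrepancy safely absorbed into $\lambda(u_L)$.
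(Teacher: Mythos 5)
Your proposal is correct and follows essentially the same route as the paper's proof: a conditional Taylor expansion of $\xi$ around $t_L$, the change of variables \eqref{change} recasting the upper endpoint $t=L$ as the conditioning event $Z\leq\sqrt{1-z/u_L}\,\zeta_L-\sqrt{\sigma/(\Delta(u_L-z))}\,y$, computing the denominator as a truncated-Gaussian normalizer and the numerator as truncated Gaussian moments via Taylor expansion of $F(x)-F(t)$, then cancelling $e^{c_\ast}$ and the $P(Z\leq\cdot)$ factor in the ratio to obtain the conditional expectations. You also correctly identify the two $A/u_L$-order corrections (from the $s^4$ term alone and its cross term with $p(x)(\gamma-s/\sqrt{\Delta(u_L-z)})$) and the replacement of the random truncation threshold by $\zeta_L$ in the $o(u_L^{-1})$-level expectations, which is exactly how the paper assembles \eqref{f2}.
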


Inside the ``$\left\{ \ \right\}$" of the above approximation,
the first expectation term is the dominating term and the second term is of order $%
O(u^{-1})$. The next lemma presents an approximation of $v'(x)$.

\begin{lemma}\label{Lemvprime}
On the set $\mathcal L_{u_L}^*$, we have that
\begin{equation}
\label{vpp}
\begin{split}
v'(x) =&\exp \left( \lambda (u_{L})+ o( yu_L^{-1}) +O(y^2zu_L^{-2})+\omega (u_{L})+\sigma u_{L}+\sigma w+\frac{\sigma
y^{2}}{2\Delta u_{L}}+\frac{A\sigma
u_{L}}{24}\gamma ^{4}\right)
\\
&\times\frac{1}{\sqrt{\Delta \sigma u_{L}}}\exp \left( \frac{z}{2u_{L}}-\frac{A}{24\Delta
^{2}\sigma u_{L}}E( Z^{4}|Z\leq \zeta _{L}) \right)
\\
&\times H_{L,x}\left(\gamma \sqrt{\sigma \Delta (u_{L}-z)},\sqrt{1-\frac{z}{u_{L}}}\zeta _{L}-\sqrt{\frac{\sigma }{\Delta (u_{L}-z)}}y; u_L \right)
 \\
&\times\exp \left\{ \frac{E\Big[\frac{p^{\prime \prime }(x)}{6\sigma \Delta u_{L}}(\gamma \sqrt{\sigma\Delta u_{L}}-Z)^{3}+\frac{Ap(x)}{24\Delta ^{2}\sigma ^{2}u_{L}}Z^{4}(\gamma
\sqrt{\sigma \Delta u_{L}}-Z)\left\vert Z\leq \zeta _{L}\right. \Big ]}{%
p(x)E(\gamma \sqrt{\sigma\Delta u_{L}}-Z\vert Z\leq \zeta _{L} )}\right\} ,
\end{split}
\end{equation}%
where
$
H_{L,y}(x,\zeta; u)\triangleq e^{-\frac{x^{2}}{2}} \times
E\Big[p(y)(x-Z)- \frac{p^{\prime}(y)}{2\sqrt{\Delta \sigma u}}(x-Z)^{2}   ~\Big | ~
Z\leq \zeta \Big].
$
\end{lemma}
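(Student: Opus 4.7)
The plan is to combine the identity $v'(x) = e^{\sigma\xi(x)} \cdot \frac{\int_{0}^{L}(F(x)-F(t))e^{\sigma\xi(t)}dt}{\int_{0}^{L}e^{\sigma\xi(t)}dt}$ with Lemma \ref{Lemf2}, which already provides the right-hand factor, and to process $e^{\sigma\xi(x)}$ on its own via a Taylor expansion of $\xi(x)$ around $t_L$, mirroring the expansion \eqref{expansion} used in the analysis of $P(E_1)$. Using the conditioning $\xi(t_L) = u_L + w$, $\xi'(t_L) = y$, $\xi''(t_L) = -\Delta(u_L-z)$ together with the change of variable $\gamma = x - t_L - y/(\Delta(u_L-z))$ from \eqref{change}, completing the square gives
\begin{equation*}
\sigma\xi(x) = \sigma u_L + \sigma w + \frac{\sigma y^2}{2\Delta(u_L-z)} - \frac{\sigma\Delta(u_L-z)}{2}\gamma^2 + \frac{\sigma A u_L}{24}\Bigl(\gamma + \tfrac{y}{\Delta(u_L-z)}\Bigr)^4 - \frac{\sigma A}{6\Delta}y\Bigl(\gamma + \tfrac{y}{\Delta(u_L-z)}\Bigr)^3 + \sigma g + \sigma\vartheta.
\end{equation*}
Expanding $1/(u_L-z) = u_L^{-1} + O(z u_L^{-2})$ rewrites $\frac{\sigma y^2}{2\Delta(u_L-z)}$ as $\frac{\sigma y^2}{2\Delta u_L} + O(y^2 z u_L^{-2})$, and the shifted quartic contributes the explicit leading term $\frac{\sigma A u_L}{24}\gamma^4$ appearing in the claim; the remaining mixed monomials $y^a \gamma^b$ from the cubic and quartic can be absorbed on $\mathcal L_{u_L}^*$ into the $\lambda(u_L)$ and $o(y u_L^{-1})$ error classes already tracked in Lemma \ref{Lemf2}.

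Next, I would multiply $e^{\sigma\xi(x)}$ by the expression furnished by Lemma \ref{Lemf2}. The key observation is that the prefactor $\exp(-\sigma\Delta(u_L-z)\gamma^2/2)$ coming out of $e^{\sigma\xi(x)}$ matches precisely the $e^{-a^2/2}$ in the definition of $H_{L,x}$ evaluated at $a = \gamma\sqrt{\sigma\Delta(u_L-z)}$, while the truncation level in the conditional expectation of Lemma \ref{Lemf2} is exactly $\sqrt{1-z/u_L}\zeta_L - \sqrt{\sigma/(\Delta(u_L-z))}y$, which by \eqref{boundary} is the upper endpoint for $s$. Pulling this quadratic factor into the first conditional expectation in Lemma \ref{Lemf2} reproduces the $H_{L,x}(\gamma\sqrt{\sigma\Delta(u_L-z)}, \cdot; u_L)$ factor in the statement. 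Finally, the second conditional expectation in Lemma \ref{Lemf2} is of order $O(u_L^{-1})$ relative to the first, so factoring out the first term and writing $1 + \mathrm{ratio} = \exp\bigl(\mathrm{ratio} + O(u_L^{-2})\bigr)$ yields the last exponential factor in \eqref{vpp}.

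The main bookkeeping obstacle is verifying that every subdominant piece from the Taylor expansion of $e^{\sigma\xi(x)}$, as well as from the $1/(u_L-z)$ expansion and the $\vartheta$ and $g$ remainders, falls cleanly into one of the allowed error classes $\lambda(u_L)$, $\omega(u_L)$, $o(y u_L^{-1})$, or $O(y^2 z u_L^{-2})$. This requires controlling each mixed monomial $y^a \gamma^b$ on $\mathcal L_{u_L}^*$ using the bounds $|y| \leq u_L^{1/2+4\delta}$, $|z| \leq u_L^{1/2+4\delta}$, and the relevant $\gamma$-scale $u_L^{-1/2}$, and then checking that the resulting orders are no larger than the classes already carried through Lemma \ref{Lemf2}. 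The arithmetic is routine but error-prone, and is where most of the work lies.
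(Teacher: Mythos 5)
Your proposal mirrors the paper's proof exactly: Taylor-expand $\xi(x)$ around $t_L$ under the same conditioning, multiply the resulting $e^{\sigma\xi(x)}$ by the expression in Lemma \ref{Lemf2}, observe that the $e^{-\sigma\Delta(u_L-z)\gamma^2/2}$ prefactor is exactly the $e^{-a^2/2}$ appearing in $H_{L,x}$, and exponentiate the $O(u_L^{-1})$ second expectation as a multiplicative correction. This is the same approach as the paper's, with the same error classes absorbing the same subdominant terms.
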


Note that the definition of $H_{L,y}(x,\zeta;u)$ is slightly different from $H_L(x,\zeta,u)$ defined as in \eqref{hl}. In particular, if we let $y=L$, then $H_{L,y}(x,\zeta;u) = H_L(x,\zeta;u)$.
Furthermore, according to the change of variable in \eqref{change}, $x\leq L$ if and only if
\begin{equation}\label{ct}
\gamma \sqrt{\sigma \Delta (u_{L}-z)}\leq \sqrt{1-\frac{z}{u_{L}}}\zeta _{L}-\sqrt{\frac{\sigma }{\Delta (u_{L}-z)}}y.
\end{equation}
Thus,  the maximization of $v'(x)$ (in choosing the variable $\gamma$) is subject to  the above constraint.
%Let
%\[
%x_{L}=\arg \sup_{x\leq \zeta _{L}}|H_{L}(x,\zeta _{L};u_L)|.
%\]%
According the definition of $u_L$ in \eqref{ul} and the notation $G_{{L}}(\zeta; u_L )=\sup_{x\leq \zeta }\log |H_{L}(x,\zeta,u_L )|$,
%we have that
%\[
%|H_{L}(x_{L},\zeta _{L};u_L)|\frac{e^{\sigma u_{L}}}{\sqrt{\Delta \sigma u_{L}}%
%}=b.
%\]%
we have that $\max_{x\in [L - u^{-1/2 +\delta},L]}|v'(x)|>b$ if and only if%
\begin{eqnarray}\label{22}
&&\max_{x\in \lbrack L-u_{L}^{-1/2+\delta },L]}\lambda
(u_{L})+\omega (u_{L})+ o( yu_L^{-1}) +O(y^2zu_L^{-2}) \\
&&+\sigma w+\frac{\sigma y^{2}}{2\Delta u_{L}}+\frac{A\sigma u_{L}}{24}\gamma ^{4}+\frac{z}{2u_{L}}-\frac{AE\left( Z^{4}|Z\leq \zeta _{L}\right) }{24\Delta ^{2}\sigma u_{L}} \notag\\
&&+\log\left \vert 
 H_{L,x}\left(\gamma \sqrt{\sigma \Delta (u_{L}-z)},\sqrt{1-\frac{z}{u_{L}}}\zeta _{L}-\sqrt{\frac{\sigma }{\Delta (u_{L}-z)}}y; \, u_L\right)\right \vert - G_L(\zeta_L; u_L)
%\log\Big| H_{L}(x_{L},\zeta_{L};u_L) \Big|
\notag\\
&&+\frac{E[\frac{%
p^{\prime \prime }}{6\sigma \Delta u_{L}}(\gamma \sqrt{\sigma \Delta u_{L}}%
-Z)^{3}+\frac{Ap}{24\Delta ^{2}\sigma ^{2}u_{L}}Z^{4}(\gamma \sqrt{\sigma
\Delta u_{L}}-Z)~~\vert ~~Z\leq \zeta _{L} ]}{p(x)E(\gamma \sqrt{\sigma\Delta u_{L}}-Z\vert Z\leq \zeta _{L} )} ~~>0.\notag
\end{eqnarray}%
We now proceed to the evaluation of $P(E_3)$ that consists of two cases.

We first consider the case that  $\vert \sqrt{1-\frac{z}{u_{L}}}\zeta _{L}-\sqrt{\frac{\sigma }{\Delta (u_{L}-z)}}y - \zeta_L\vert\leq \varepsilon $.
Note that the major variation of the left-hand-side of \eqref{22} is dominated by
\begin{equation}\label{HH}
\log\left \vert  H_{L,x}\left(\gamma \sqrt{\sigma \Delta (u_{L}-z)},\sqrt{1-\frac{z}{u_{L}}}\zeta _{L}-\sqrt{\frac{\sigma }{\Delta (u_{L}-z)}}y;u_L\right)\right \vert.
\end{equation}
Thanks to the discussion in Remark \ref{RemH}, the above expression is maximized at (subject to the constraint \eqref{ct})
$\gamma \sqrt{\sigma \Delta (u_{L}-z)}= \sqrt{1-\frac{z}{u_{L}}}\zeta _{L}-\sqrt{\frac{\sigma }{\Delta (u_{L}-z)}}y,$
that is,
\begin{equation}\label{gammam}
\gamma =\frac{\zeta _{L}}{\sqrt{\Delta \sigma u_{L}}}-\frac{y}{\Delta
(u_{L}-z)}.
\end{equation}
Recall the change of variable in \eqref{change}, this corresponds to $x=L$. That is, the maximum is attained on the boundary $x=L$.
Then, we can replace $H_{L,x}$ in \eqref{22} by $H_{L,L} = H_L$.
Let $\gamma _{L}=\frac{\zeta _{L}}{\sqrt{\sigma \Delta u_{L}}}$. For the particular choice of $\gamma$ in \eqref{gammam}, we have that $\gamma^4 = \gamma_L^4 + o(y^2/u_L^2)$.
We have that $\max_{x\in \lbrack L-u_{L}^{-1/2+\delta },L]}|v_{L}^{\prime }(x)|>b$ if and only if $\mathcal A \geq \omega (u_{L})$ where
\begin{equation}
\label{AA}
\begin{split}
\mathcal{A} \triangleq&\lambda (u_{L})+ o( yu_L^{-1}) +O(y^2zu_L^{-2})+\sigma w+\frac{\sigma y^{2}}{2\Delta u_{L}}+
%\frac{\sigma \Delta \gamma _{L}^{2}}{2}z+
\frac{A\sigma u_{L}}{24}\gamma
_{L}^{4}+\frac{z}{2u_{L}}-\frac{AE\left( Z^{4}|Z\leq \zeta _{L}\right) }{%
24\Delta ^{2}\sigma u_{L}} 
\\
&+ G_{L}\Big (\sqrt{1-\frac{z}{u_{L}}}\zeta _{L}-\sqrt{\frac{\sigma }{\Delta (u_{L}-z)}}y; u_L \Big) - G_{L}(\zeta_L; u_L) 
\\
&+\frac{E[\frac{p^{\prime \prime }(L)}{6\sigma \Delta u_{L}}(\gamma _{L}\sqrt{\sigma
\Delta u_{L}}-Z)^{3}+\frac{Ap(L)}{24\Delta ^{2}\sigma ^{2}u_{L}}Z^{4}(\gamma
_{L}\sqrt{\sigma \Delta u_{L}}-Z)\left\vert Z\leq \zeta _{L}\right. ]}{p(L)E(\zeta_L-Z\vert Z\leq \zeta _{L} )} .
\end{split}
\end{equation}%
%For the boundary case, the positive and negative tails of $\max_{x\in
%\lbrack L-u^{-1/2+\delta },L]}v^{\prime }(x)$ are not symmetric. In
%particular,
%\[
%P(\max_{x\in \lbrack L-u^{-1/2+\delta },L]}-v^{\prime
%}(x)>b)=o(1)P(\max_{x\in \lbrack L-u^{-1/2+\delta },L]}v^{\prime }(x)>b)
%\]%
%and we only need to consider the positive side.

\begin{lemma}\label{LemA}
The expression $\mathcal A$ can be simplified to
\begin{eqnarray*}
\mathcal{A} =\lambda (u_{L})+ o( yu_L^{-1}) +O(y^2zu_L^{-2})+\sigma w+\frac{\sigma y^{2}}{2\Delta u_{L}}+\frac{z}{2u_{L}}+\frac{\kappa_L }{u_{L}}
%+\frac{z}{2u_{L}}(\zeta _{L}^{2}+1-p\zeta _{L}P\left( Z\leq \zeta_{L}\right) )
-\frac{\Xi _{L}+o(1)}{2}\Big(\frac{\zeta _{L}z}{2u_{L}}+\sqrt{\frac{%
\sigma }{\Delta (u_{L}-z)}}y\Big)^{2},
\end{eqnarray*}%
where $\kappa_L$ is given as in \eqref{kappa}.
\end{lemma}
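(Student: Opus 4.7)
The plan is to unpack $\mathcal{A}$ from \eqref{AA} term by term, exploiting three structural facts: (i) the definition $\gamma_L = \zeta_L/\sqrt{\sigma\Delta u_L}$, which makes several quantities collapse to functions of $\zeta_L$ alone; (ii) $\zeta_L$ is the (unconstrained) maximizer of $\zeta \mapsto G_L(\zeta;u_L)$, so its first derivative vanishes and the leading variation is quadratic with curvature $-\Xi_L$; (iii) the localization $|y|, |z| \leq u_L^{1/2+4\delta}$ on $\mathcal L^*_{u_L}$, which forces all neglected remainders to be smaller than the terms we retain.

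First, I would substitute $\gamma_L = \zeta_L/\sqrt{\sigma\Delta u_L}$ into the explicit terms of $\mathcal{A}$. This gives $\frac{A\sigma u_L}{24}\gamma_L^4 = \frac{A\zeta_L^4}{24\sigma\Delta^2 u_L}$, and turns every occurrence of $\gamma_L\sqrt{\sigma\Delta u_L} - Z$ in the final conditional-expectation ratio into $\zeta_L - Z$. The last ratio in \eqref{AA} becomes $\frac{1}{u_L}$ times exactly the third summand in the definition of $\kappa_L$ in \eqref{kappa}. Combining this with $\frac{A\zeta_L^4}{24\sigma\Delta^2 u_L}$ and $-\frac{A E(Z^4|Z\leq \zeta_L)}{24\Delta^2 \sigma u_L}$, all three $1/u_L$-order constants assemble into $\kappa_L/u_L$.

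Next, I would Taylor-expand the $G_L$ difference. Write the argument shift as
\[
\delta \;\triangleq\; \sqrt{1-z/u_L}\,\zeta_L - \sqrt{\sigma/\Delta(u_L-z)}\,y - \zeta_L \;=\; -\tfrac{\zeta_L z}{2u_L} - \sqrt{\tfrac{\sigma}{\Delta(u_L-z)}}\,y + O(z^2/u_L^2).
\]
Since $\zeta_L = \arg\max_\zeta G_L(\zeta;u_L)$, one has $\partial_\zeta G_L(\zeta_L;u_L) = 0$, and by the definition of $\Xi_L$, $\partial^2_\zeta G_L(\zeta_L;u_L) = -\Xi_L + o(1)$ as $u_L \to \infty$. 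Therefore
\[
G_L(\zeta_L + \delta; u_L) - G_L(\zeta_L; u_L) \;=\; -\tfrac{\Xi_L + o(1)}{2}\,\delta^2 + O(|\delta|^3),
\]
and $\delta^2 = \bigl(\tfrac{\zeta_L z}{2u_L} + \sqrt{\sigma/\Delta(u_L-z)}\,y\bigr)^2$ up to terms of order $z^3/u_L^3 + y z^2/u_L^{5/2}$, which are subsumed in the existing $O(y^2 z u_L^{-2})$ and $o(u_L^{-1})$ remainders on $\mathcal L^*_{u_L}$. Reassembling everything yields the claimed expression.

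The main obstacle is book-keeping of error terms. The cubic Taylor remainder $O(|\delta|^3)$ for $|y|,|z|\leq u_L^{1/2+4\delta}$ is of order $u_L^{-3/2+12\delta}$, which must be verified to be dominated by the retained $o(yu_L^{-1})$ term; similarly, the expansion of $\sqrt{1-z/u_L}$ and $\sqrt{\sigma/\Delta(u_L-z)}$ beyond leading order produces $y z / u_L^{3/2}$ and $z^2/u_L^2$ contributions that must be absorbed into $\lambda(u_L)$, $o(y u_L^{-1})$, or $O(y^2 z u_L^{-2})$. One must also confirm that the $u_L$-dependence inside $G_L$ (which enters through the $p'(L)/\sqrt{\Delta\sigma u_L}$ correction in $H_L$) contributes only an additional $o(1)$ to the curvature, justifying the notation $\Xi_L + o(1)$. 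These are all routine given Conditions A1--A4 and the definition of $\mathcal L^*_{u_L}$, but they require careful inequality chasing rather than any new idea.
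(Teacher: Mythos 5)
Your proposal is correct and follows essentially the same route as the paper: substitute $\gamma_L = \zeta_L/\sqrt{\sigma\Delta u_L}$ so that the explicit $1/u_L$ terms collapse to functions of $\zeta_L$ and assemble into $\kappa_L/u_L$, then Taylor-expand $G_L$ around its critical point $\zeta_L$ (first derivative vanishes, curvature $\partial_\zeta^2 G_L = -\Xi_L + o(1)$) to obtain the quadratic term in $\bigl(\tfrac{\zeta_L z}{2u_L} + \sqrt{\sigma/\Delta(u_L-z)}\,y\bigr)$, with the localization on $\mathcal L^*_{u_L}$ absorbing the cubic and higher remainders. One small bookkeeping point worth noting: your substitution correctly yields $\tfrac{A\zeta_L^4}{24\Delta^2\sigma u_L}$ as the first of the three assembled constants, whereas the displayed definition of $\kappa_L$ in \eqref{kappa} has $\tfrac{A\zeta_L}{24\Delta^2\sigma}$; comparison with $\kappa_h$ (which reads $\zeta_h^4$) and with the paper's own intermediate step in this lemma confirms that $\zeta_L^4$ is intended, so your derivation is the right one.
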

With the above lemma, we rewrite $S(w,y,z)$ as
\begin{eqnarray*}
S(w,y,z) &=&u_{L}^{2}+w^{2}+\frac{\Delta ^{2}(w+z)^{2}}{A-\Delta ^{2}} +o(1) + o(y^2)\\
&&+2u_{L}\Big[\mathcal{A}/\sigma-\frac{z}{2\sigma u_{L}} -\frac{\kappa_L }{\sigma u_{L}}
%-\frac{z}{2u_{L}\sigma }(\zeta _{L}^{2}+1-p(L)\zeta _{L}P\left( Z\leq \zeta_{L}\right) ) \\
%&&
+\frac{\Xi _{L}+o(1)}{2\sigma }\Big(\frac{\zeta _{L}z}{2u_{L}}+\sqrt{\frac{%
\sigma }{\Delta (u_{L}-z)}}y\Big)^{2}\\
&&~~~~~~~~~~~+\lambda (u_{L})+ o( yu_L^{-1}) +O(y^2zu_L^{-2})\Big].
\end{eqnarray*}%
Similar to the derivation of \eqref{int}, by the dominated convergence theorem, we have that
\begin{equation}\label{main}
\begin{split}
&P\left( \max_{x\in \lbrack L-u_{L}^{-1/2+\delta },L]}|v^{\prime
}(x)|>b~;~ \mathcal  L_{u_L}^*;~~ \left \vert \sqrt{1-\frac{z}{u_{L}}}\zeta _{L}-\sqrt{\frac{\sigma }{\Delta (u_{L}-z)}}y - \zeta_L \right \vert\leq \varepsilon \right)\\
 \sim &\frac{\sqrt\Delta }{(2\pi )^{3/2}\sqrt{A-\Delta ^{2}}}%
u_{L}^{-1}e^{-u_{L}^{2}/2+\frac{\kappa _{L}}{\sigma }}
\times \int \exp \left( -\frac{1}{2}\left(\frac{\Delta ^{2}z^{2}}{A-\Delta ^{2}}
-\frac z \sigma
%-\frac{z}{\sigma }(\zeta _{L}^{2}+1-p\zeta _{L}P\left( Z\leq \zeta_{L}\right)
+\frac{\Xi _{L}}{\Delta }y^{2}\right) \right)dydz\\
=& D_L \times u_L^{-1} \times e^{-u_L^2/2}.
\end{split}
\end{equation}%
The following lemma presents the case that $\left \vert \sqrt{1-\frac{z}{u_{L}}}\zeta _{L}-\sqrt{\frac{\sigma }{\Delta (u_{L}-z)}}y - \zeta_L \right \vert\geq \varepsilon $.

\begin{lemma}\label{LemMinor}
Under the conditions in Theorem \ref{ThmMain}, we have that
\begin{eqnarray*}
P\left( \max_{x\in \lbrack L-u_{L}^{-1/2+\delta },L]}|v^{\prime
}(x)|>b; \mathcal L_{u_L}^*; ~~\left\vert \sqrt{1-\frac{z}{u_{L}}}\zeta _{L}-\sqrt{\frac{\sigma }{\Delta (u_{L}-z)}}y - \zeta_L\right\vert\geq \varepsilon \right) = o(1)u_{L}^{-1}e^{-u_{L}^{2}/2}.
\end{eqnarray*}
\end{lemma}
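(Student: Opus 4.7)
The plan is to reduce the event in question to a pointwise lower bound on the ``peak height'' $\sigma w + \sigma y^2/(2\Delta u_L)$, and then to exploit a fortuitous cancellation in the joint Gaussian density $h(w,y,z)$ of Lemma \ref{LemDen} that extracts an extra $e^{-c u_L}$ factor. This exponential gain beats the merely polynomial Lebesgue volume of $\mathcal L_{u_L}^*$, so the trivial bound $P(E_3,\mathcal L_{u_L}^*\mid w,y,z)\leq 1$ already suffices.

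For the first step, I start from the equivalent form \eqref{22} of $E_3$ and maximize the $\log|H_{L,x}|$ piece over the admissible $\gamma$-range: the supremum equals $G_L(\zeta';u_L) + O(u_L^{-1/2+\delta})$, where $\zeta' \triangleq \sqrt{1-z/u_L}\zeta_L - \sqrt{\sigma/(\Delta(u_L-z))}\,y$, and at the maximizing $\gamma$ the contribution $A\sigma u_L\gamma^4/24$ is $o(1)$ on $\mathcal L_{u_L}^*$. Hence $E_3$ forces
\begin{equation*}
\sigma w + \frac{\sigma y^2}{2\Delta u_L} \geq G_L(\zeta_L;u_L) - G_L(\zeta';u_L) - o(1).
\end{equation*}
Since $\zeta_L$ is the unique global maximizer of $G_L(\cdot;\infty)$ with negative second derivative $-\Xi_L$, while $G_L(\zeta';\infty)\to \log p(L) - 1/2 < G_L(\zeta_L;\infty)$ as $\zeta'\to +\infty$ and $G_L(\zeta';\infty)\to -\infty$ as $\zeta'\to -\infty$, the deficit is uniformly at least some $c(\varepsilon)>0$ whenever $|\zeta'-\zeta_L|\geq\varepsilon$, regardless of how far $\zeta'$ drifts on $\mathcal L_{u_L}^*$. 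Thus on the event in question, $\sigma w + \sigma y^2/(2\Delta u_L) \geq c(\varepsilon)/2$ for all sufficiently large $u_L$.

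For the second step, I bound $h(w,y,z)\leq C\exp(-u_L^2/2 - u_L w - y^2/(2\Delta))$ by dropping the nonnegative $w^2/2$ and $\Delta^2(w+z)^2/(2(A-\Delta^2))$ terms in $S(w,y,z)$. Rearranging the constraint from the first step as $-u_L w\leq -u_L c(\varepsilon)/(2\sigma) + y^2/(2\Delta)$, the $y^2$ pieces cancel exactly, and the density is uniformly bounded by $C\exp(-u_L^2/2 - u_L c(\varepsilon)/(2\sigma))$ throughout the feasible region. Integrating this uniform bound against $P(E_3,\mathcal L_{u_L}^*\mid w,y,z)\leq 1$ over $\mathcal L_{u_L}^*\subseteq[-u_L^{3\delta},u_L^{3\delta}]\times[-u_L^{1/2+4\delta},u_L^{1/2+4\delta}]^2$, whose Lebesgue volume is $O(u_L^{1+11\delta})$, gives
\begin{equation*}
P\bigl(E_3\cap\{|\zeta'-\zeta_L|\geq\varepsilon\}\cap\mathcal L_{u_L}^*\bigr) \leq C u_L^{1+11\delta}\exp\Bigl(-\frac{u_L^2}{2}-\frac{u_L c(\varepsilon)}{2\sigma}\Bigr) = o(u_L^{-1}e^{-u_L^2/2}),
\end{equation*}
since the exponential factor beats any polynomial growth.

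The main hurdle is the first step: one must verify the $G_L$-deficit uniformly over the unbounded $\zeta'$-range that arises because $|y|$ can be as large as $u_L^{1/2+4\delta}$. This requires controlling $G_L(\zeta';u_L)$ both near $\zeta_L$ (via strict concavity at the maximum) and in the asymptotic regimes $\zeta'\to\pm\infty$ (where the limits $\log p(L)-1/2$ and $-\infty$ respectively must be shown strictly below $G_L(\zeta_L;\infty)$), together with the uniformity of $G_L(\cdot;u_L)\to G_L(\cdot;\infty)$ and $G_{L,x}\to G_L$ as $x\to L$. Once this uniform deficit is in place, the density cancellation and the volume integration are routine.
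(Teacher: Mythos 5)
Your proof is correct and takes essentially the same route as the paper's: both arguments use the fact that on the event $\{|\zeta' - \zeta_L| \geq \varepsilon\}$, the $G_L$-deficit forces an extra factor of $e^{-c(\varepsilon)u_L}$ in $e^{-S(w,y,z)/2}$, which beats the polynomial Lebesgue volume of the localized set. The paper phrases this by inserting the bound $G_L(\zeta';u_L) - G_L(\zeta_L;u_L) \leq -\delta_0\varepsilon^2$ directly into the exponent $S(w,y,z)$, whereas you first extract the pointwise peak-height constraint $\sigma w + \sigma y^2/(2\Delta u_L) \geq c(\varepsilon)/2$ and then exploit the clean cancellation in $h$; these are equivalent, and the uniformity issue you flag (controlling the $G_L$-deficit over the full $|y|\lesssim u_L^{1/2+4\delta}$ range, hence over unbounded $\zeta'$) is a hurdle the paper's proof also implicitly relies on but does not spell out.
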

Combining \eqref{main}, Lemma \ref{LemMinor}, and the localization result in Proposition \ref{PropLocal2}, we have that
$$P\Big( \max_{x\in [ L-u_{L}^{-1/2+\delta },L]}|v^{\prime
}(x)|>b\Big)\sim D_{L}\times u_{L}^{-1}e^{-u_{L}^{2}/2}.
 $$

\paragraph{Approximation of $P(E_2).$}
The analysis of $P(E_2)$ is completely analogous. In particular, we let
$t_0 = \frac{\zeta_0}{\sqrt{\Delta\sigma u_0}}$, $\xi(t_0) = u_0 + w$, $\xi'(t_0) = y$, and $\xi''(t_0)= -\Delta(u-z)$
and further adopt change of variables
$x= t_0 +\frac{y}{\Delta (u_0-z)} - \gamma$ and $t= t_0 + \frac{y}{\Delta(u_0-z) }-\frac{s}{\sqrt{\Delta(u_0 -z)}}.$
Then the calculations are exactly the same as those of $P(E_3)$. Therefore, we omit the repetitive derivations and provide the result  that
$
P( \max_{x\in \lbrack 0,u_{L}^{-1/2+\delta }]}|v^{\prime }(x)|>b)
\sim  D_0 \times u_0^{-1}\times e^{-u_0 ^2 /2}.
%\frac{\sqrt\Delta }{(2\pi )^{3/2}\sqrt{A-\Delta ^{2}}}%
%u_{0}^{-1}e^{-u_{0}^{2}/2+\frac{\kappa _{0}}{\sigma }} \\
%&&\times \int \exp \Big\{-\frac{1}{2}\Big[\frac{\Delta ^{2}z^{2}}{A-\Delta ^{2}}
%-\frac z \sigma
%%-\frac{z}{\sigma }(\zeta _{0}^{2}+1-p\zeta _{0}P\left( Z\leq \zeta_{0}\right)
%+\frac{\Xi _{0}}{\Delta }y^{2}\Big]\Big\}dydz.
$
With the inequality \eqref{bern} and \eqref{minorint}, we conclude the proof.

\bibliographystyle{plain}
\bibliography{bibstat,bibprob,RefGrant,PM}
%\bibliography{bibstat}

\newpage
\appendix

\centerline{\bf \LARGE Supplemental Material}

\section{Proof of Theorem \protect\ref{ThmHomo}} \label{SecH}

Similar to the proof of Theorem \ref{ThmMain}, we consider the event $E_{1}$%
, $E_{2}$, and $E_{3}$ separately.
By homogeneity and symmetry, $P(E_{2})=P(E_{3})$. The approximations of $P(E_2)$ and  $P(E_{3}) $ are identical to those obtained in Section \ref{SecE3} by setting $p(x)\equiv p_0$. Therefore,%
\[
P(E_{2})=P(E_{3})\sim D_{h}u_{h}^{-1}e^{-u_{h}^{-2}/2}.
\]%
From the derivation of $P(E_{2})$ in the previous proof, we obtain that $P(E_{2}\cap E_{3})=o(P(E_{2}))$. For the rest of the proof, we show that $P(E_{1})=o(P(E_{2}))$ and thus $P(E_{1}\cap E_{2})=o(P(E_{2}))$.

\paragraph{Approximation of $P(E_{1})$.}
Let $H(x,u)$ be as defined for Theorem \ref{ThmMain} and $u$ solve%
\[
p_{0} H(\gamma _{\ast }(u),u) e^{\sigma u }=b,
\]%
where $\gamma _{\ast }(u)=u^{-1/2}\Delta ^{-1/2}\sigma ^{-1/2}$. For the rest of the proof, we will show that
\begin{equation}\label{e1}
P(E_1) = O(1)e^{-\frac{u^{2}}{2}+O(u^{\varepsilon})}.
\end{equation}
for any $\varepsilon>0$.
According to the discussion in Section \ref{SecHeu}, there exists an $\varepsilon_0 >0$ such that $u > u_h + \varepsilon_0$ and thus $e^{-\frac{u^{2}}{2}+O(u^{\varepsilon})} = o(1)u_{h}^{-1}e^{-u_{h}^{-2}/2}$.
If the above bound in \eqref{e1} can be established, then we can conclude the proof.
%we are able to show that $P(E_1) = o(P(E_2))$.

%To simplify the notation, we shift the interval to $[-L/2,L/2]$. We expand $\xi (x)$ around the origin.

First, we  derive an approximation for
\[
\alpha (u,\varepsilon )=P\Big(\max_{x\in \lbrack \frac L 2 -u^{-1/2+\varepsilon
},\frac L 2 + u^{-1/2+\varepsilon }]}|v^{\prime }(x)|>b\Big) ,
\]%
where $\varepsilon>0$ is chosen small enough.
Then, we split the region $[0,L]$ into $N=\frac{L}{2u^{-1/2+\varepsilon }}$ many
intervals each of which is a location shift of $[0,2u^{-1/2+\varepsilon }]$, i.e. $[2ku^{-1/2+\varepsilon
},2ku^{-1/2+\varepsilon }+2u^{-1/2+\varepsilon }]$.
Thanks to the homogeneity of $\xi(x)$, the approximations for
\[
P\left(\max_{x\in \lbrack 2ku^{-1/2+\varepsilon },2ku^{-1/2+\varepsilon
}+2u^{-1/2+\varepsilon }]}|v^{\prime }(x)|>b\right)
\]%
are the same for all $1\leq k \leq N-2$. Then,  we have%
\begin{eqnarray*}
&&P\left( \cup _{k=1}^{N-2}\{\max_{x\in \lbrack 2ku^{-1/2+\varepsilon
},2ku^{-1/2+\varepsilon }+2u^{-1/2+\varepsilon }]}|v^{\prime }(x)|>b\}\right)
%&\leq &\sum_{k=1}^{N-2}P(\max_{x\in \lbrack 2ku^{-1/2+\varepsilon
%},2ku^{-1/2+\varepsilon }+2u^{-1/2+\varepsilon }]}|v^{\prime }(x)|>b) \\
\leq (1+o(1))\frac{L}{2u^{-1/2+\varepsilon }}\alpha (u,\varepsilon ).
\end{eqnarray*}%
%Using the approximation of $\alpha (u,\varepsilon )$ (to be developed
%immediately), we have that
%\[
%P( E_1) \leq O(1)e^{-\frac{u^{2}}{2}+O(u^{\varepsilon})}.
%\]

In what follows, we derive an approximation for $\alpha (u,\varepsilon )$. The derivation is similar to the proof of the Theorem \ref{ThmMain}.
Therefore, we omit the details and only lay out the key steps and the major differences.
We expand $\xi
(x)$ around $x= \frac L 2$ conditional on (by redefining the notations)
$$\xi (\frac L 2)=u+w, \quad \xi ^{\prime }(\frac L 2)=y, \quad \xi ^{\prime \prime }(\frac L 2)=-\Delta (u-z)$$
and obtain that
\begin{eqnarray}
\xi (x) &=&u+w+\frac{y^{2}}{2\Delta (u-z)}-\frac{\Delta (u-z)}{2}\left( x-%
\frac{y}{\Delta (u-z)}\right) ^{2}  \nonumber \\
&&~~~-\frac{Ay}{6\Delta }x^{3}+\frac{Au}{24}x^{4}+g(x-\frac L 2)+\zeta (x-\frac L 2).  \nonumber
\end{eqnarray}%
Similarly, we have the following proposition for localization.

\begin{proposition}
\label{PropLocal1} For $\delta' > 3 \varepsilon$, let
\begin{eqnarray*}
\mathcal G_{u} &=&\{|w|>u^{3\varepsilon }\}\cup \{|y|>u^{1/2+4\varepsilon }\}\cup
\{|z|>u^{1/2+4\varepsilon }\} \\
&&\cup \Big\{ \sup_{x\notin \lbrack -u^{-1/2+\varepsilon },u^{-1/2+\varepsilon
}]}|g(x)|-\delta ^{\prime }ux^{2}>0\Big\} \cup \Big\{ \sup_{x\in \lbrack
-u^{-1/2+\varepsilon },u^{-1/2+\varepsilon }]}|g(x)|>u^{-1/2+\delta ^{\prime}}\Big\}
\end{eqnarray*}%
Under the conditions of Theorem \ref{ThmHomo}, we have%
\[
P(\mathcal G_{u};\max_{x\in \lbrack \frac L 2 -u^{-1/2+\varepsilon },\frac L 2+u^{-1/2+\varepsilon
}]}|v^{\prime }(x)|>b)=o(1)e^{-u^{2}/2}.
\]
\end{proposition}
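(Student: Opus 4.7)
The plan is to follow the same template used to establish Proposition \ref{PropLocal} and Proposition \ref{PropG}, with the interior point $x_\ast$ replaced by $L/2$ and the threshold $u$ playing the role of the corresponding level. Because $\mathcal{G}_u$ is a union of five subsets, it suffices by a union bound to control each summand separately, and to show that each contributes $o(e^{-u^2/2})$ when intersected with $M_u \triangleq \{\max_{|x-L/2|\leq u^{-1/2+\varepsilon}} |v'(x)|>b\}$.

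For the three components governing the conditional variables $w$, $y$, $z$, I would condition on $(\xi(L/2), \xi'(L/2), \xi''(L/2))$ and use the explicit Gaussian density $h(w,y,z)$ from Lemma \ref{LemDen}, whose exponent $S(w,y,z) = u^2 + w^2 + \tfrac{\Delta^2(w+z)^2}{A-\Delta^2} + 2u(w + y^2/(2\Delta u))$ contains an additive $u^2$ and a positive quadratic penalty in $(w,y,z)$. Writing
\begin{equation*}
P(\mathcal{G}_u \cap M_u) \leq \int_{\mathcal{G}_u} P(M_u\mid w,y,z)\,h(w,y,z)\,dw\,dy\,dz \leq \int_{\mathcal{G}_u} h(w,y,z)\,dw\,dy\,dz,
\end{equation*}
and integrating over the respective tail sets, one obtains factors of $e^{-u^{6\varepsilon}/2}$, $e^{-u^{1+8\varepsilon}/(2\Delta)}$, and a comparable bound for $z$, each multiplying the leading $e^{-u^2/2}$. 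These are $o(u^{-1}e^{-u^2/2})$ for any $\varepsilon > 0$.

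For the two $g$-events, I would apply the Borel-TIS lemma (Lemma \ref{LemBorel}) using that $g$ is the mean-zero Gaussian residual after matching the first three derivatives of $\xi$ at $L/2$, so a standard conditional-Gaussian/Taylor computation yields $\mathrm{Var}(g(x)) = O(x^6)$. On the inner interval $|x|\leq u^{-1/2+\varepsilon}$, $\sigma_{\mathcal{U}}^2 = O(u^{-3+6\varepsilon})$, and Borel-TIS gives
\begin{equation*}
P\Bigl(\sup_{|x|\leq u^{-1/2+\varepsilon}} |g(x)| > u^{-1/2+\delta'}\Bigr) \leq \exp\bigl(-c\,u^{2\delta' - 6\varepsilon}\bigr),
\end{equation*}
which is $o(e^{-u^2/2})$ as soon as $\delta' > 3\varepsilon$. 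For the outer region I would partition $\{|x|>u^{-1/2+\varepsilon}\}$ into dyadic annuli $\{2^k u^{-1/2+\varepsilon} \leq |x| < 2^{k+1} u^{-1/2+\varepsilon}\}$, on each of which $\sigma_{\mathcal{U}}^2 = O((2^k u^{-1/2+\varepsilon})^6)$, apply Borel-TIS against the threshold $\delta' u (2^k u^{-1/2+\varepsilon})^2$, and sum the resulting super-exponentially small bounds, exactly as in the proof of Proposition \ref{PropG}.

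The main obstacle I anticipate is not any single estimate but rather bookkeeping: one must ensure that the intersection with $M_u$ (rather than the naive marginal probability) produces the full $o(e^{-u^2/2})$ decay, and that the exponent constraint $\delta' > 3\varepsilon$ is exactly the one forced by the variance scaling $\mathrm{Var}(g(x)) = O(x^6)$ on the inner interval (the factor $3$ coming from $6\varepsilon / 2$). Since the trivial bound $P(M_u\mid w,y,z) \leq 1$ together with the explicit Gaussian integration suffices, and because no new probabilistic mechanism is needed beyond those already exploited in Propositions \ref{PropLocal} and \ref{PropG}, I expect the proof to follow by essentially transcribing those arguments with the obvious notational substitutions.
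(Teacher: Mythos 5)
Your treatment of the two $g$-events and the $y$-tail is essentially sound (modulo an arithmetic slip: on the inner interval $\sigma_{\mathcal U}^2 = O(u^{-3+6\varepsilon})$, so Borel--TIS against the threshold $u^{-1/2+\delta'}$ gives $\exp(-cu^{2+2\delta'-6\varepsilon})$, not $\exp(-cu^{2\delta'-6\varepsilon})$; fortunately the correct exponent does beat $u^{2}$ once $\delta'>3\varepsilon$, so the conclusion you state survives). The genuine gap is in the $w$- and $z$-tails, where the trivial bound $P(M_u\mid w,y,z)\leq 1$ is fatally lossy.

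Recall from Lemma \ref{LemDen} that the exponent of $h$ is $-\tfrac12 S(w,y,z)$ with
\[
S(w,y,z)=u^{2}+w^{2}+\frac{\Delta^{2}(w+z)^{2}}{A-\Delta^{2}}+2uw+\frac{y^{2}}{\Delta}.
\]
The cross term $-uw$ in $-\tfrac12 S$ is \emph{positive} on the half-line $w<0$, so $h$ grows like $e^{u|w|}$ there and in fact $\int_{\{w<-u^{3\varepsilon}\}}h\,dw\,dy\,dz\approx P(\xi(L/2)<u-u^{3\varepsilon})\to 1$. Your claimed factor $e^{-u^{6\varepsilon}/2}$ would only arise from the harmless $-w^{2}/2$ term and only on the favorable side $w>u^{3\varepsilon}$; on the side $w<-u^{3\varepsilon}$ the naive bound does not produce $o(e^{-u^{2}/2})$ at all. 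The same problem infects the $z$-tail: taking $w\approx-z$ annihilates the quadratic penalty $\Delta^{2}(w+z)^{2}/(A-\Delta^{2})$ while $-uw=uz$ remains large and positive. So "transcribing \ref{PropLocal} and \ref{PropG} with $P(M_u\mid w,y,z)\le 1$" is precisely where the argument breaks.

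What the paper actually does (see Lemmas \ref{LemL1}--\ref{LemL2} inside the proof of Proposition \ref{PropLocal}, which is the template for the present proposition) is exploit the intersection with $M_u$ nontrivially. First one shows that $M_u$ forces $\sup_{|x-L/2|\le u^{-1/2+\varepsilon}}\xi(x)>u-O(\log u)$: the factor $F(x)-\int F e^{\sigma\xi}\big/\int e^{\sigma\xi}$ is bounded, so $|v'(x)|>b$ requires $\xi(x)>u-O(\log u)$; the localization events (handled by the change-of-measure $Q_\zeta$ and Borel--TIS, analogous to Lemmas \ref{LemL1}, \ref{LemL11}) rule out the excursion being far from $L/2$. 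Then, given $\xi(L/2)=u+w$ with $w<-u^{3\varepsilon}$, the residual process after conditioning on $(\xi,\xi',\xi'')$ at $L/2$ must bridge a gap of size $u^{3\varepsilon}$ over the tiny window $|x-L/2|\le u^{-1/2+\varepsilon}$, and Borel--TIS makes this super-exponentially unlikely, giving the required $o(e^{-u^2/2})$. Only after $|w|<u^{3\varepsilon}$ has been secured does the "simple Gaussian tail bound" on $y$ and $z$ apply, because then $|uw|\le u^{1+3\varepsilon}$ is dominated by the quadratic penalties in $y$ and $z$. In short, you need to carry the event $M_u$ \emph{into} the estimate for the $w$- and $z$-tails; dropping it by means of $P(M_u\mid w,y,z)\le 1$ is the step that fails.
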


Let%
\[
\mathcal L_{u}=\mathcal G_{u}^{c}.
\]%
We now proceed to the factor
\[
F(x)-\frac{\int_{0}^{L}F(t)e^{\sigma \xi (t)}dt}{\int_{0}^{L}e^{\sigma \xi
(t)}dt}.
\]%
Following exactly the same derivation as Lemma \ref{Lemf1} in Section \ref{sec:3.1} and noting that $p(x)\equiv p_{0}$, we have that%
\[
F(x)-\frac{\int_{0}^{L}F(t)e^{\sigma \xi (t)}dt}{\int_{0}^{L}e^{\sigma \xi
(t)}dt}=p_{0}\gamma \exp \left\{ \frac{Ay^{3}}{3\Delta ^{4}(u-z)^{3}\gamma }%
+o(u^{-1})+\omega (u)\right\} ,
\]%
where we redefine a change of variable similar  to \eqref{gamma} as
\[
\gamma =x - \frac L 2 -\frac{y}{\Delta (u-z)}.
\]%
Thus, similar to \eqref{dv}, we obtain that
\begin{eqnarray*}
v^{\prime }(x) &=&e^{\sigma \xi (x)}\left[ F(t)-\frac{\int_{0}^{L}F(t)e^{%
\sigma \xi (t)}dt}{\int_{0}^{L}e^{\sigma \xi (t)}dt}\right] \\
&=&e^{\sigma u+\sigma w+\frac{\sigma y^{2}}{2\Delta (u-z)}}\times
p_{0}\gamma e^{-\frac{\sigma \Delta u}{2}\gamma ^{2}} \\
&&\times \exp \Big\{\frac{\sigma \Delta z}{2}\gamma ^{2}-\frac{\sigma A}{6\Delta
}y(\gamma +\frac{y}{\Delta (u-z)})^{3}+\frac{\sigma Au}{24}(\gamma +\frac{y}{%
\Delta (u-z)})^{4} \\
&&~~~~~~~~~~~~+\frac{Ay^{3}}{3\Delta ^{4}(u-z)^{3}\gamma }+o(u^{-1})+\omega (u)\Big\}.
\end{eqnarray*}%
We further simplify the above display and obain that%
\begin{eqnarray*}
v^{\prime }(x) &=&e^{\sigma u+\sigma w+\frac{\sigma y^{2}}{2\Delta (u-z)}%
}\times p_{0}\gamma e^{-\frac{\sigma \Delta u}{2}\gamma ^{2}} \\
&&\times \exp\Big \{\frac{\sigma \Delta z}{2}\gamma ^{2}-\frac{\sigma A\gamma
^{2}}{4\Delta ^{2}u}y^{2}-\frac{\sigma A}{8\Delta ^{4}u^{3}}y^{4}+y^{3}\Big[
\frac{\sigma A\gamma }{3\Delta ^{3}u^{2}}-\frac{A}{3\Delta ^{4}u^{3}\gamma }%
\Big] \\
&&~~~~~~~~~+O(u^{-1})+\omega (u)\Big\}.
\end{eqnarray*}%
For all {$|y|\leq (1+\varepsilon^{\prime })\Delta u^{1/2+\varepsilon }$}, we have that
\begin{eqnarray}\label{mmm}
\max_{x\in \lbrack \frac L 2 -u^{-1/2+\varepsilon },\frac L 2+ u^{-1/2+\varepsilon }]} v^{\prime }(x)
&\leq &\max_{x\in \lbrack \frac L 2 - (1+ 2\varepsilon')u^{-1/2+\varepsilon },\frac L 2+(1+ 2\varepsilon') u^{-1/2+\varepsilon }]} v^{\prime }(x)\notag\\
&=&e^{\sigma u+\sigma w+\frac{\sigma y^{2}}{2\Delta (u-z)%
}}\times p_{0}\gamma _{\ast }e^{-\frac{\sigma \Delta u}{2}\gamma _{\ast
}^{2}} \notag\\
&&\times \exp \Big\{\frac{\sigma \Delta z}{2}\gamma _{\ast }^{2}-\frac{\sigma
A\gamma _{\ast }^{2}}{4\Delta ^{2}u}y^{2}-\frac{\sigma A}{8\Delta ^{4}u^{3}}%
y^{4}+y^{3}\Big[ \frac{\sigma A\gamma _{\ast }}{3\Delta ^{3}u^{2}}-\frac{A}{%
3\Delta ^{4}u^{3}\gamma _{\ast }}\Big] \notag\\
&&~~~~~~~~~+O(u^{-1}+ z^2 u^{-2})+\omega (u)\Big\}.
\end{eqnarray}%
That is, $v^{\prime }(x)$ is maximized when $x=\frac L 2 +\gamma_{\ast }+\frac{y}{\Delta (u-z)}+o(u^{-1}) +O(z\gamma_*/u)$. Since $\gamma _{\ast }=\Delta
^{-1/2}\sigma ^{-1/2}u^{-1/2}$, then%
\[
\frac{\sigma A\gamma _{\ast }}{3\Delta ^{3}u^{2}}-\frac{A}{3\Delta
^{4}u^{3}\gamma _{\ast }}=0.
\]%
Thus, we have that
\begin{eqnarray*}
\max_{x\in \lbrack \frac L 2 -u^{-1/2+\varepsilon },\frac L 2 +u^{-1/2+\varepsilon }]}v^{\prime
}(x)
%&=&e^{\sigma u+\sigma w+\frac{\sigma y^{2}}{2\Delta (u-z)}}\times
%p_{0}\gamma _{\ast }e^{-\frac{\sigma \Delta u}{2}\gamma _{\ast }^{2}} \\
%&&\times \exp \Big\{\frac{\sigma \Delta z}{2}\gamma _{\ast }^{2}-\frac{\sigma
%A\gamma _{\ast }^{2}}{4\Delta ^{2}u}y^{2}-\frac{\sigma A}{8\Delta ^{4}u^{3}}%
%y^{4}+O(u^{-1}+ z^2 u^{-2})+\omega (u)\Big\} \\
&>&b
\end{eqnarray*}%
implies that %if and only if%
\begin{eqnarray*}
\mathcal{A} &\triangleq &\sigma w+\frac{\sigma y^{2}}{2\Delta (u-z)}+\frac{z}{2u}-%
\frac{A}{4\Delta ^{3}u^{2}}y^{2}-\frac{\sigma A}{8\Delta ^{4}u^{3}}y^{4}+O( z^2 u^{-2}) +O(u^{-1})\\
&\geq &\omega (u).
\end{eqnarray*}%
Corresponding to the analysis in Section \ref{SecInt}, the next step is to insert $\mathcal{A}$ to $S(w,y,z)$ and obtain that%
\begin{eqnarray*}
S(w,y,z) &=&u^{2}+w^{2}+\frac{\Delta ^{2}(w+z)^{2}}{A-\Delta ^{2}}+2u(w+%
\frac{y^{2}}{2\Delta u}) \\
&=&u^{2}+\frac{(\sqrt{A}w+\Delta ^{2}A^{-1/2}z)^{2}}{A-\Delta ^{2}}+\frac{%
\Delta ^{2}}{A}z^{2} \\
&&~~~+2u\frac{\mathcal{A}}{\sigma }-\frac{y^{2}z}{\Delta u}-\frac{z}{\sigma }+%
\frac{A}{2\Delta ^{3}\sigma }\frac{y^{2}}{u}+\frac{A}{4\Delta ^{4}}\frac{%
y^{4}}{u^{2}} + O(z^2/u)+O(1)\\
&=&u^{2}+\frac{(\sqrt{A}w+\Delta ^{2}A^{-1/2}z)^{2}}{A-\Delta ^{2}}+\frac{2u%
\mathcal{A}}{\sigma } \\
&&~~~+\frac{\Delta ^{2}}{A}z^{2}-z\Big(\frac{y^{2}}{\Delta u}+\frac{1}{\sigma }\Big)+%
\frac{A}{4\Delta ^{2}}\Big(\frac{y^{2}}{\Delta u}+\frac{1}{\sigma }\Big)^{2}-\frac{A}{4\Delta ^{2}\sigma ^{2}} + O(z^2/u)+O(1)\\
&=&u^{2}+\frac{(\sqrt{A}w+\Delta ^{2}A^{-1/2}z)^{2}}{A-\Delta ^{2}}+\frac{2u%
\mathcal{A}}{\sigma } \\
&&~~~+\Big[\frac{\Delta z}{\sqrt{A}}-\frac{\sqrt{A}}{2\Delta }\Big(\frac{y^{2}}{\Delta u}+\frac{1}{\sigma }\Big)\Big]^{2}-\frac{A}{4\Delta ^{2}\sigma ^{2}}+ O(u^{8\varepsilon}).
\end{eqnarray*}%
For the last step in the above derivation, we use the fact that, on the set $\mathcal L_u$, $O(z^2/u) = O(u^{8\varepsilon})$.
Thus,
\begin{eqnarray*}
&&P\left( \max_{x\in \lbrack -u^{-1/2+\varepsilon },u^{-1/2+\varepsilon
}]}|v^{\prime }(x)|>b\right) \\
&=&\Delta\int_{\mathcal L_{u}}h(w,y,z)P(\max_{x\in \lbrack -u^{-1/2+\varepsilon
},u^{-1/2+\varepsilon }]}|v^{\prime }(x)|>b|w,y,z)dwdydz \\
&=&O(1)e^{-\frac{u^{2}}{2}+ O(u^{8\varepsilon})+\frac{A}{8\Delta ^{2}\sigma ^{2}}}\int_{\mathcal L_{u}}P(\mathcal{A}>\omega (u)) \\
%&&\times\frac{\sqrt \Delta}{(2\pi )^{3/2}\sqrt{A-\Delta ^{2}}} \\
&&\times \exp \Big\{-\frac{u\mathcal{A}}{\sigma }-\frac{1}{2}\frac{(\sqrt{A}w+\Delta
^{2}A^{-1/2}z)^{2}}{A-\Delta ^{2}}-\frac{1}{2}\Big[\frac{\Delta z}{\sqrt{A}}-
\frac{\sqrt{A}}{2\Delta }\Big(\frac{y^{2}}{\Delta u}+\frac{1}{\sigma }\Big)\Big]^{2}\Big\}dwdydz.
\end{eqnarray*}%
We introduce a  change of variable
\[
B=\frac{\Delta z}{\sqrt{A}}-\frac{\sqrt{A}}{2\Delta }(\frac{y^{2}}{\Delta u}+%
\frac{1}{\sigma }).
\]%
Then,%
\begin{eqnarray*}
\sqrt{A}w+\Delta ^{2}A^{-1/2}z &=&\Delta B+\sqrt{A}w+\frac{\sqrt{A}}{2}(%
\frac{y^{2}}{\Delta u}+\frac{1}{\sigma }) \\
&=&\frac{\sqrt{A}}{2\sigma }+\Delta B+\sqrt{A}\mathcal{A}+o(1).
\end{eqnarray*}%
Thus, by dominated convergence theorem and applying the change of variable from $(w,z,y)$ to $(\mathcal A, B, y)$, we have that
\begin{eqnarray}\label{1}
&&P\left( \max_{x\in \lbrack \frac L 2-u^{-1/2+\varepsilon },\frac L 2+u^{-1/2+\varepsilon
}]}|v^{\prime }(x)|>b;|y|\leq (1+\varepsilon ^{\prime })\Delta
u^{1/2+\varepsilon };\mathcal L_{u}\right)
%&=&e^{-\frac{u^{2}}{2}+\frac{A}{8\Delta ^{2}\sigma ^{2}}}\frac{\sqrt \Delta}{(2\pi
%)^{3/2}\sqrt{A-\Delta ^{2}}}\int_{\mathcal L_{u}\cap \{|y|\leq (1-\varepsilon
%^{\prime })\Delta u^{1/2+\varepsilon }\}}I(\mathcal{A}>0)  \nonumber \\
%&&\exp \Big\{-\frac{u\mathcal{A}}{\sigma }-\frac{1}{2}\frac{(\sqrt{A}w+\Delta
%^{2}A^{-1/2}z)^{2}}{A-\Delta ^{2}}-\frac{1}{2}\Big[\frac{\Delta z}{\sqrt{A}}-%
%\frac{\sqrt{A}}{2\Delta }(\frac{y^{2}}{\Delta u}+\frac{1}{\sigma }%
%)\Big]^{2}\Big\}dwdydz  \nonumber \\
%&=&O(1)e^{-\frac{u^{2}}{2}} \int_{\mathcal A >0, |y|\leq (1+ \varepsilon')\Delta u^{1/2+\varepsilon}} \exp\Big \{-\frac{1}{2}\frac{(\Delta B+\sqrt{A}/2\sigma +\sqrt{A}\mathcal{A})^{2}}{A-\Delta ^{2}}-\frac{B^{2}}{2}-u\mathcal A/\sigma\Big\}dB d\mathcal A dy
%\nonumber\\
=O(1)e^{-\frac{u^{2}}{2}+O(u^{8\varepsilon})}.
\end{eqnarray}

For $|y|>(1+\varepsilon ^{\prime })\Delta u^{1/2+\varepsilon }$, note that the function $|v'(x)|$ is maximized at $x= \frac L 2 +\gamma_{\ast }+\frac{y}{\Delta (u-z)}$, that is outside the interval $[ \frac L 2  -u^{-1/2+\varepsilon },\frac L 2 + u^{-1/2+\varepsilon }]$. Therefore, $\max_{x\in [ \frac L 2  -u^{-1/2+\varepsilon },\frac L 2 + u^{-1/2+\varepsilon }]}|v^{\prime}(x)|$ is less than  the estimate in \eqref{mmm} by at least a factor of $e^{-\lambda u^{2\varepsilon}}$ (by considering the dominating  term $\gamma e^{-\frac{\sigma \Delta u}{2}\gamma^{2}}$).
Therefore,
\[
\max_{x\in [ \frac L 2  -u^{-1/2+\varepsilon },\frac L 2 + u^{-1/2+\varepsilon }]}|v^{\prime}(x)|>b
\]%
if%
\[
\mathcal{A}=\sigma w+\frac{\sigma y^{2}}{2\Delta (u-z)}+\frac{z}{2u}-\frac{A%
}{4\Delta ^{3}u^{2}}y^{2}-\frac{\sigma A}{8\Delta ^{4}u^{3}}y^{4} + O(z^2/u^2)+O(u^{-1})
>\lambda
u^{2\varepsilon}+\omega (u).
\]%
Thus,%
\begin{eqnarray}\label{2}
&&P\Big( \max_{x\in [\frac L 2 -u^{-1/2+\varepsilon },\frac L 2 +u^{-1/2+\varepsilon
}]}|v^{\prime }(x)|>b;(1-\varepsilon ^{\prime })\Delta u^{1/2+\varepsilon
}\leq |y|\leq u^{1/2+4\varepsilon };\mathcal L_{u}\Big)   \\
%&=&O(1)e^{-u^{2}/2}\int_{\mathcal L_{u}\cap \{(1-\varepsilon ^{\prime })\Delta
%u^{1/2+\varepsilon }\leq |y|\leq u^{1/2+4\varepsilon }\}}I(\mathcal{A}%
%>\lambda u^{2\varepsilon })  \nonumber \\
%&&~~~~~~~~~~~~~~~~~ \times \exp \{-\frac{u\mathcal{A}}{\sigma }-\frac{1}{2}\frac{(\Delta B+\sqrt{%
%A}/2\sigma )^{2}}{A-\Delta ^{2}}-\frac{B^{2}}{2}\}d\mathcal{A}dB  \nonumber
%\\
&=&O(1)e^{-\frac{u^{2}}{2}+O(u^{8\varepsilon})}.  \nonumber
\end{eqnarray}%
We combine the solution of (\ref{1}), (\ref{2}), Lemma \ref{PropLocal1} and
obtain that%
\begin{eqnarray*}
\alpha (u,\varepsilon ) &=&P\left( \max_{x\in \lbrack\frac L 2  -u^{-1/2+\varepsilon
},\frac L 2 +u^{-1/2+\varepsilon }]}|v^{\prime }(x)|>b\right) \\
%&=&e^{-\frac{u^{2}}{2}+\frac{A}{8\Delta ^{2}\sigma ^{2}}}\frac{%
%2u^{-1/2+\varepsilon }\sqrt \Delta}{(2\pi )^{3/2}\sqrt{A-\Delta ^{2}}} \\
%&&\frac{\sqrt{A}}{(2\pi )^{3/2}\sqrt{A-\Delta ^{2}}}\int \exp \{-\frac{1}{2}%
%\frac{(\Delta B+\sqrt{A}/2\sigma )^{2}}{A-\Delta ^{2}}-\frac{B^{2}}{2}\}dB\\
&=&O(1)e^{-\frac{u^{2}}{2}+O(u^{8\varepsilon})}.
\end{eqnarray*}%
Thus%
\[
P(E_{1})=O(1)u^{1/2-\varepsilon }\alpha (u,\varepsilon)=O(1)e^{-\frac{u^{2}}{2}+O(u^{8\varepsilon})}.
\]
As $\varepsilon$ can be chosen arbitrarily small, we obtain \eqref{e1} by redefining $\varepsilon$.

\section{Proofs of Propositions}

\begin{proof}[Proof of Proposition \protect\ref{PropLocal}]
The proof needs a change of measure described as follows. For $\zeta \in R$%
, let
$$A_{\zeta }=\{x :\xi (x)>\zeta \}\cap [ x_{\ast }+u^{-1/2+\delta/2},L-u^{-1/2+\delta }]$$
be the excursion set (on the
interval $[x_{\ast }+u^{-1/2+\delta /2},L-u^{-1/2+\delta }]$) over level $%
\zeta $ and let $P$ be the underlying nominal (original) probability
measure. Define $Q_{\zeta }\left( \cdot \right) $ via%
\begin{equation}
dQ_{\zeta }=\frac{mes(A_{\zeta })}{E(mes(A_{\zeta }))}dP=\frac{mes(A_{\zeta
})}{\int_{x_{\ast }+u^{-1/2+\delta /2}}^{L-u^{-1/2+\delta }}P(\xi (x)>\zeta
)dx}dP,  \label{measure}
\end{equation}%
where $E(\cdot )$ is the expectation under $P$ and $mes(A_{\zeta })$ is the
Lebesgue measure of the excursion set above level $\zeta $. Note that under $%
Q_{\zeta }$, almost surely $\sup_{L}\xi (x)>\zeta $. In order to generate
sample paths according $Q_{\zeta }$, one first simulates $\tau $ with
density function $\left\{ h\left( \tau \right) :\tau \in [ x_{\ast }+u^{-1/2+\delta/2},L-u^{-1/2+\delta }]\right\} $%
\begin{equation}
h(\tau )=\frac{P(\xi (\tau )>b)}{E(mes(A_{\zeta }))}  \label{DenTau}
\end{equation}%
that is a uniform distribution over the interval $[ x_{\ast }+u^{-1/2+\delta/2},L-u^{-1/2+\delta }]$;
then simulate $\xi (\tau )$ conditional distribution (under the original
law) given that $\xi (\tau )>\zeta $; lastly simulate $\{\xi (x):x\neq \tau
\}$ given $(\tau ,\xi (\tau ))$ according to the original distribution. If $%
\zeta $ is suitably chosen, $Q_{\zeta }$ serves as a good approximation of
the conditional distribution of $\xi (x)$ given that $\sup_{x\in [ x_{\ast }+u^{-1/2+\delta/2},L-u^{-1/2+\delta }]}\xi (x)>b$%
.

\begin{lemma}
\label{LemL1} Under conditions in Theorem \ref{ThmMain}, we have that%
\[
P\Big( \sup_{x\in \lbrack x_{\ast }+u^{-1/2+\delta /2},L-u^{-1/2+\delta
}]}\xi (x)>u-(\log u)^{2},E_{1}%\sup_{x\in \lbrack 0,T]}|v^{\prime }(x)|>b
\Big) =o(u^{-1}e^{-u^{2}/2}).
\]
\end{lemma}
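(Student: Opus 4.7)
The plan is to exploit the change of measure $Q_\zeta$ defined in \eqref{measure}, specialized to $\zeta=u-(\log u)^2$ on the one-sided away region $I_{\mathrm{away}}=[x_\ast+u^{-1/2+\delta/2},\,L-u^{-1/2+\delta}]$. By construction,
\[
P\bigl(\sup_{I_{\mathrm{away}}}\xi>\zeta,\,E_1\bigr)=E[mes(A_\zeta)]\cdot E_{Q_\zeta}\!\bigl[I(E_1)/mes(A_\zeta)\bigr],
\]
and the Mills ratio yields $E[mes(A_\zeta)]=\int_{I_{\mathrm{away}}}P(\xi(x)>\zeta)\,dx\sim L\,u^{-1}e^{-u^2/2+u(\log u)^2-(\log u)^4/2}$. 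The lemma therefore reduces to producing a multiplicative gain $E_{Q_\zeta}[I(E_1)/mes(A_\zeta)]=o\bigl(e^{-u(\log u)^2+(\log u)^4/2}\bigr)$ from the event $E_1$ under $Q_\zeta$.

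Under $Q_\zeta$ the sample carries a marker $\tau$ uniform on $I_{\mathrm{away}}$ with $\xi(\tau)$ drawn from its $P$-conditional law given $\xi(\tau)>\zeta$; the rest of the path then follows the usual conditional Gaussian distribution given $\xi(\tau)$. I will split $E_1=E_1^{\mathrm{loc}}\cup E_1^{\mathrm{far}}$ according to whether the point witnessing $|v'(x)|>b$ is within $u^{-1/2+8\delta}$ of $\tau$. On $E_1^{\mathrm{loc}}$, applying the heuristic of Section \ref{SecHeu} to the quadratic expansion of $\xi$ around $\tau$ gives $\max_{|x-\tau|\le u^{-1/2+8\delta}}|v'(x)|\le (1+o(1))\,p(\tau)\gamma_\ast(\xi(\tau))e^{\sigma\xi(\tau)-1/2}$, so $|v'|>b=p(x_\ast)\gamma_\ast(u)e^{\sigma u-1/2}$, combined with strong concavity of $p$ at $x_\ast$ and $|\tau-x_\ast|\ge u^{-1/2+\delta/2}$, forces
\[
\xi(\tau)\;\ge\;u+\sigma^{-1}\log\frac{p(x_\ast)}{p(\tau)}+o(1)\;\ge\;u+c\,u^{-1+\delta}.
\]
The Mills ratio $Q_\zeta(\xi(\tau)\ge u+cu^{-1+\delta})=P(\xi(0)>u+cu^{-1+\delta})/P(\xi(0)>\zeta)\le e^{-u(\log u)^2+O((\log u)^4)-c'u^\delta}$ supplies the required factor with a safety margin $e^{-c'u^\delta}$. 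On $E_1^{\mathrm{far}}$, the crude bound $|v'(x)|\le 2\|F\|_\infty e^{\sigma\xi(x)}$ forces $\xi(x)\ge u-O(\log u)$ at some $x$ with $|x-\tau|>u^{-1/2+8\delta}$; since the covariance $C(x-\tau)$ decays away from zero by Assumption A3, a Borel--TIS estimate (Lemma \ref{LemBorel}) applied to the conditional field after a standard $\varepsilon$-net discretization yields $Q_\zeta$-probability $O(e^{-u^2/2+O(u\log u)})$ for this event.

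The main obstacle is controlling the factor $1/mes(A_\zeta)$, which is typically $O(u)$ but could blow up on thin excursion clusters. I plan to truncate to $\{mes(A_\zeta)\ge u^{-1-\eta}\}$ for a small $\eta>0$, losing at most a polynomial factor $u^{1+\eta}$ that is absorbed by the $e^{-c'u^\delta}$ margin; on the complement, the conditional quadratic envelope $\xi(x)\approx \xi(\tau)-\tfrac{\Delta\xi(\tau)}{2}(x-\tau)^2$ and the Gaussian concentration of the residual $g(x-\tau)$ (cf.\ Proposition \ref{PropG}) force $mes(A_\zeta)\gtrsim\sqrt{(\xi(\tau)-\zeta)/(\Delta\xi(\tau))}$ with overwhelming probability, ruling out the thin-cluster regime. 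A second delicate point is the uniformity of all estimates over $\tau\in I_{\mathrm{away}}$, which follows from the stationarity of $\xi$ together with the strict bound $p''(x_\ast)<0$ supplied by Assumption A4.
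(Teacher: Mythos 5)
Your argument follows essentially the same route as the paper's: the change of measure $Q_\zeta$ restricted to the away-region, the multiplicative gain coming from strong concavity of $p$ at $x_\ast$ (which forces $\xi(\tau)$ to exceed $u$ by $\Theta(u^{-1+\delta})$ and hence produces an $e^{-cu^\delta}$ saving), and the same $O(u)$-type handling of the $1/mes(A_\zeta)$ factor that the paper likewise asserts without full detail. The paper encodes the identical penalty as the $-\lambda u^{-1+\delta}$ term in $\mathcal A'$ and propagates it through an explicit $(w,y,z)$-density integral rather than through your Mills-ratio phrasing on $\xi(\tau)$, and it localizes via the $(w,y,z)$ and $g$-process bounds instead of your near/far split relative to $\tau$, but these are equivalent computations.
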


\begin{proof}[Proof of Lemma \protect\ref{LemL1}]
Let%
\[
F_{b}=\{\sup_{x\in \lbrack x _{\ast }+u^{-1/2+\delta /2},L-u^{-1/2+\delta
}]}\xi (x)>u-(\log u)^{2}\}.
\]%
Let $\zeta =u-(\log u)^{2}-1/u$. Then, the probability can be written as
\begin{eqnarray*}
P\left( F_{b},E_{1}\right)  &\leq &O(1) E^{Q}\left[ \frac{P(Z>u-(\log u)^{2})}{%
mes(A_{\zeta })};F_{b},E_{1}\right]  \\
&=&O(1)\int_{x _{\ast }+u^{-1/2+\delta /2}}^{L-u^{-1/2+\delta }}E_{\tau
}^{Q}\left[ \frac{P(Z>u-(\log u)^{2})}{mes(A_{\zeta })};F_{b},E_{b}\right]
d\tau ,
\end{eqnarray*}%
where we use $E_{\tau }^{Q}$ to denote the conditional expectation $%
E^{Q}(\cdot |\tau )$ under the measure $Q_\zeta$. Given a particular $\tau \in \lbrack x_{\ast}+u^{-1/2+\delta /2},L-u^{-1/2+\delta }]$, we redefine the change of
variables
\[
\xi (\tau )=u+w,\xi ^{\prime }(\tau )=y,\xi ^{\prime \prime }(\tau )=-\Delta
(u-z).
\]
Note that the current definition of $(w,y,z)$ is different from that in the proposition and Theorem \ref{ThmMain}. As the previous definition of $(w,y,z)$ will not be used in this lemma, to simplify the notation, we do not create another notation and use $(w,y,z)$ differently.
Conditional on $(w,y,z)$ the process $g(x)$ is a mean zero Gaussian process such that
$$\xi(x) = E(\xi(x) | w,y,z) + g(x-\tau).$$
We have the bound of the excursion set that $E^{Q}(1/mes(A_{\zeta }))=O(u)$, the detailed development of which is omitted.
With this in mind, we first have that that
\[
E^{Q}\left[ \frac{P(Z>u-(\log u)^{2})}{mes(A_{\zeta })};|z|\geq
u^{1/2+\delta /16},F_{b},E_{b}\right] =o(u^{-1}e^{-u^{2}/2}).
\]%
and similarly%
\[
E^{Q}\left[ \frac{P(Z>u-(\log u)^{2})}{mes(A_{\zeta })};|y|\geq
u^{1/2+\delta /16},F_{b},E_{b}\right] =o(u^{-1}e^{-u^{2}/2}).
\]%
In addition, for some $\lambda _{0}$ sufficiently large and $\delta _{0}$
small, we have that
\begin{eqnarray*}
&&E\left( \frac{P(Z>u-(\log u)^{2})}{mes(A_{\zeta })};\sup_{|x|\leq
u^{-1/2+\delta }}|g(x)|>\lambda _{0}u^{-1+4\delta },\mbox{ or }%
\sup_{|x|>u^{-1/2+\delta }}|g(x)|-\delta _{0}ux^{2}>0\right)  \\
&=&o(u^{-1}e^{-u^{2}/2}).
\end{eqnarray*}%
Then, we only need to consider the situation that $|y|<u^{1/2+\delta /16}$
and $|z|<u^{1/2+\delta /16}$. Furthermore, using Taylor expansion on $\xi (x)
$ as we had done several times previously, the process $\xi (x)$ is a
approximately a quadratic function with mode being $\tau +\frac{y}{\Delta
\sigma (u-z)}$ for $\tau \in \lbrack x_{\ast }+u^{-1/2+\delta
/2},L-u^{-1/2+\delta }]$. Thus, when considering the integral $%
\int_{0}^{L}e^{\xi (t)}dt$ and $\int_{0}^{L}(F(x)-F(t))e^{\xi (t)}dt$, we do
not have to consider the boundary issue as in the analysis of $P(E_{2})$.
With the same calculations for \eqref{mmA} by expanding $\xi $ at $\tau $
instead of $x_{\ast }$, we obtain that%
\[
\sup_{x\in \lbrack u^{-1/2+\delta },L-u^{-1/2+\delta }]}|v^{\prime }(x)|\geq
b
\]%
if and only if%
\begin{eqnarray*}
\mathcal{A} &\mathcal{=}&\sigma w+\frac{\sigma y^{2}}{2\Delta (u-z)}+\frac{%
\sigma \Delta z}{2}\gamma _{\ast }^{2} \\
&&-\frac{\sigma A}{6\Delta }y(\gamma _{\ast }+\frac{y}{\Delta (u-z)})^{3}+%
\frac{\sigma Au}{24}(\gamma _{\ast }+\frac{y}{\Delta (u-z)})^{4} \\
&&-\frac{p^{\prime }(x)}{2p(x)\gamma _{\ast }}(\gamma _{\ast }^{2}+\frac{1}{%
\sigma \Delta (u-z)})+\frac{p^{\prime \prime }(x)}{6p(x)}(\gamma _{\ast
}^{2}+\frac{3}{\sigma \Delta (u-z)}) \\
&&-\frac{Ay^{3}}{\Delta ^{4}(u-z)^{3}\gamma _{\ast }}+\log \frac{p(x)}{%
p(x_{\ast })} \\
&\geq &o(u^{-1})+\omega (u),
\end{eqnarray*}%
where the $x$ in ``$p(x)$'' is $x= \tau + \gamma_* + \frac{y}{\Delta(u-z)} + o(u^{-1}) + O(z\gamma_*/u)$.
Similar to the derivation for \eqref{A}, we expand the second row in the definition of $\mathcal A$ and obtain that
\begin{eqnarray*}
\mathcal{A} &=&\sigma w+\frac{\sigma y^{2}}{2\Delta u}+\frac{\sigma }{%
2\Delta u^{2}}y^{2}z-\frac{\sigma Ay^{4}}{8\Delta ^{4}(u-z)^{3}}+\frac{%
\sigma \Delta z}{2}\gamma _{\ast }^{2}   -\frac{\sigma Ay^{2}}{4\Delta ^{2}(u-z)}\gamma _{\ast }^{2}+\frac{\sigma A(u-z)}{24}\gamma
_{\ast }^{4}  \\
&&-\frac{p^{\prime }(x)}{2p(x)\gamma_* }(\gamma _{\ast }^{2}+\frac{1}{\sigma
\Delta (u-z)})
  +\frac{p^{\prime \prime }(x_{\ast})}{6p(x_{\ast})}(\gamma _{\ast }^{2}+\frac{3}{\sigma \Delta (u-z)})  +\log \frac{p(x)}{p(x_*)}.\nonumber
\end{eqnarray*}%
Notice that
$$\frac{p^{\prime \prime }(x)}{6p(x)}(\gamma _{\ast }^{2}+\frac{3}{\sigma \Delta (u-z)}) = O(u^{-1}).$$
When $|x-x_{\ast }|< \varepsilon$, by Taylor expansion
$$|\frac{p^{\prime }(x)}{2p(x)\gamma_* }(\gamma _{\ast }^{2}+\frac{1}{\sigma
\Delta (u-z)})|=O((x-x_{\ast })/\sqrt{u}) = o(\log p(x) - \log p(x_*));$$
when $|x-x_{\ast }|> \varepsilon$
$$|\frac{p^{\prime }(x)}{2p(x)\gamma_* }(\gamma _{\ast }^{2}+\frac{1}{\sigma
\Delta (u-z)})|= O(u^{-1/2}) = o(1) = o(\log p(x) - \log p(x_*)).$$
Therefore $|\frac{p^{\prime }(x)}{2p(x)\gamma_* }(\gamma _{\ast }^{2}+\frac{1}{\sigma
\Delta (u-z)})|$ is always of a smaller order than $\log p(x) - \log p(x_*)$.
On the region $|x-x_{\ast }|>\frac{u^{-1/2+\delta /2}}{2}$,
there exists a positive $\lambda $ such that
\[
\log \frac{p(x)}{p(x_{\ast })}\leq -2\lambda u^{-1+\delta }.
\]%
Thus, $\mathcal{A}$ is bounded by
\begin{eqnarray*}
\mathcal{A}< \mathcal A' &=&\sigma w+\frac{\sigma y^{2}}{2\Delta u}
+\frac{\sigma }{2\Delta u^{2}}y^{2}z-\frac{\sigma Ay^{4}}{8\Delta ^{4}(u-z)^{3}}+\frac{\sigma \Delta z}{2}\gamma _{\ast }^{2}   -\frac{\sigma Ay^{2}}{4\Delta ^{2}(u-z)}\gamma _{\ast }^{2}+\frac{\sigma A(u-z)}{24}\gamma
_{\ast }^{4}  \\
&&-\lambda u^{-1+\delta}\nonumber
\end{eqnarray*}%
Furthermore, notice that
\begin{eqnarray*}
&&E_{\tau }^{Q}\left[ \frac{P(Z>u-(\log u)^{2})}{mes(A_{\zeta })}%
;|y|,|z|\leq u^{1/2+\delta /16},F_{b},E_{1}\right]  \\
&\leq &O(1)\int_{w\geq -(\log u)^{2}}e^{-\frac{1}{2}S(w,y,z)}\frac{P(%
\mathcal{A}^{\prime }\geq \omega (u),F_{b})}{mes(A_{\zeta })}dwdydz.
\end{eqnarray*}%
Similar to the previous development, we write
\begin{eqnarray*}
S(w,y,z) &=&u^{2}+w^{2}+\frac{\Delta ^{2}(w-z)^{2}}{A-\Delta ^{2}}+2u(w+%
\frac{y^{2}}{2\Delta u}) \\
&=&u^{2}+w^{2}+\frac{\Delta ^{2}(w-z)^{2}}{A-\Delta ^{2}} \\
&&+2u\Big[\frac{\mathcal{A}^{\prime }}{\sigma }
-\frac{y^{2}z }{2\Delta u^{2}}+\frac{ Ay^{4}}{8\Delta ^{4}(u-z)^{3}}-\frac{ \Delta z}{2}\gamma _{\ast }^{2}   +\frac{ Ay^{2}}{4\Delta ^{2}(u-z)}\gamma _{\ast }^{2}-\frac{ A(u-z)}{24}\gamma_{\ast }^{4}
+\lambda u^{-1+\delta }/\sigma\Big].
\end{eqnarray*}%
Thus, by dominated convergence theorem and the fact that $mes(A_{\zeta})^{-1} = O(u)$, we have that
\begin{eqnarray*}
&&E_{\tau }^{Q}\left[ \frac{P(Z>u-(\log u)^{2})}{mes(A_{\zeta })}%
;|y|,|z|\leq u^{1/2+\delta /16},F_{b},E_{1}\right]  \\
&\leq &O(1)\int_{|y|,|z|\leq u^{-1/2+\varepsilon /4}}
E(mes(A_{\zeta })^{-1};\mathcal{A}^{\prime }\geq \omega (u))
%\frac{P(\mathcal{A}^{\prime }\geq \omega (u))}{mes(A_{\zeta })}
e^{-\frac{1}{2}S(w,y,z)}dwdydz \\
&\leq &O(1)e^{-\frac{u^{2}}{2}-\lambda u^{\delta }/\sigma}\\
&&\times \int_{|y|,|z|\leq
u^{-1/2+\varepsilon /4}}
E(mes(A_{\zeta })^{-1};\mathcal{A}^{\prime }\geq \omega (u))
%\frac{P(\mathcal{A}^{\prime }\geq \omega (u))}{mes(A_{\zeta })}
\\
&&~~~~~~~~\times \exp \left[ -\frac{\Delta ^{2}}{2(A-\Delta ^{2})}z^{2}-\frac{u\mathcal{A}^{\prime }}{\sigma }
+\frac{y^{2}z }{2\Delta u}-\frac{ Ay^{4}}{8\Delta ^{4}u^{2}}+\frac{  z}{2\sigma}
+\frac{ Ay^{2}}{4\Delta ^{3}\sigma u}\right] dwdydz \\
&=&o(u^{-1}e^{-u^{2}/2}).
\end{eqnarray*}
\end{proof}

With a completely analogous proof as the Lemma \ref{LemL1}, we have that

\begin{lemma}
\label{LemL11} Under conditions in Theorem \ref{ThmMain}, we have that%
\[
P\left( \sup_{x\in \lbrack u^{-1/2+\delta },x_{\ast }-u^{-1/2+\delta
/2}]}\xi (x)>u-(\log u)^{2},E_{1}\right) =o(u^{-1}e^{-u^{2}/2}).
\]
\end{lemma}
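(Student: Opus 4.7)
The plan is to mirror the proof of Lemma \ref{LemL1} nearly verbatim, since the only difference is that the spatial interval $[u^{-1/2+\delta}, x_*-u^{-1/2+\delta/2}]$ sits to the left of $x_*$ rather than the right. The key structural fact that was used in Lemma \ref{LemL1}, namely that on $|x-x_*|\geq u^{-1/2+\delta/2}/2$ the strong concavity of $|p|$ at its interior maximum $x_*$ yields $\log(p(x)/p(x_*))\leq -2\lambda u^{-1+\delta}$ for some $\lambda>0$, is symmetric in $x$ about $x_*$, so it is available on the left side too.

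Concretely, first I would introduce the same change of measure $Q_\zeta$ with $\zeta=u-(\log u)^2-u^{-1}$, this time with excursion set $A_\zeta=\{x:\xi(x)>\zeta\}\cap[u^{-1/2+\delta},x_*-u^{-1/2+\delta/2}]$. Sampling $\tau$ uniformly on this interval and conditioning on the triple $(\xi(\tau),\xi'(\tau),\xi''(\tau))=(u+w,y,-\Delta(u-z))$, I would reuse the localization estimates to exclude $|y|,|z|>u^{1/2+\delta/16}$ and the bad-$g$ events, each of which contributes $o(u^{-1}e^{-u^2/2})$ by exactly the same conditional Gaussian calculations used in Lemma \ref{LemL1}.

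Next I would repeat the derivation of the sufficient condition $\mathcal{A}\geq\omega(u)$ for $\sup|v'(x)|>b$, expanding $\xi$ around $\tau$ rather than $x_*$. Because $\tau\in[u^{-1/2+\delta},x_*-u^{-1/2+\delta/2}]$ lies in the interior of $[0,L]$ and is bounded away from $x_*$, the Laplace-type analysis of $\int e^{\sigma\xi}$ and $\int F(t)e^{\sigma\xi}$ proceeds without boundary truncation, exactly as in Lemma \ref{LemL1}. The bound $\log(p(x)/p(x_*))\leq -2\lambda u^{-1+\delta}$ then produces the upper bound $\mathcal{A}\leq\mathcal{A}'$ with an extra penalty $-\lambda u^{-1+\delta}$, which after inserting into $S(w,y,z)$ gives the crucial exponential gain $e^{-\lambda u^\delta/\sigma}$.

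Finally, applying dominated convergence and the estimate $E^Q[\mathrm{mes}(A_\zeta)^{-1}]=O(u)$ (already invoked in Lemma \ref{LemL1} and whose derivation does not depend on which side of $x_*$ we sit on), the whole integral is bounded by $O(1)u^{-1}e^{-u^2/2}\cdot e^{-\lambda u^\delta/\sigma}=o(u^{-1}e^{-u^2/2})$, as required. The main conceptual point is simply to confirm the symmetry: none of the localization bounds, conditional Gaussian computations, or the concavity estimate on $p$ distinguish $x<x_*$ from $x>x_*$. The main potential obstacle is verifying carefully that the representation $x=\tau+\gamma_*+y/[\Delta(u-z)]+o(u^{-1})+O(z\gamma_*/u)$ again falls in a region where the concavity estimate on $p$ applies; this holds because $|x-\tau|=O(u^{-1/2+\delta/16})$ is much smaller than $u^{-1/2+\delta/2}/2$, so $|x-x_*|\geq u^{-1/2+\delta/2}/4$ on the relevant set and the penalty kicks in. All other details carry over unchanged.
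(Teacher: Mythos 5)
Your proposal matches the paper's treatment exactly: the paper states that Lemma~\ref{LemL11} follows ``with a completely analogous proof as the Lemma~\ref{LemL1},'' and you have correctly identified that every ingredient — the change of measure $Q_\zeta$ on the reflected interval, the localization of $(w,y,z)$ and of $g$, the interior Laplace expansion around $\tau$, the strong-concavity penalty $\log(p(x)/p(x_*))\leq -2\lambda u^{-1+\delta}$, and the $E^Q[\mathrm{mes}(A_\zeta)^{-1}]=O(u)$ bound — depends only on $|x-x_*|$ and so carries over unchanged to the left of $x_*$. Your check that the maximizer $x=\tau+\gamma_*+y/[\Delta(u-z)]+o(u^{-1})+O(z\gamma_*/u)$ stays a distance $\Omega(u^{-1/2+\delta/2})$ from $x_*$ (since $|x-\tau|=O(u^{-1/2+\delta/16})$ is of strictly smaller order) is the right verification that the concavity penalty indeed applies.
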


\vskip12pt
We write
\[
J_{b}=\{\sup_{x\in \lbrack u^{-1/2+\delta },x_{\ast }-u^{-1/2+\delta
/2}]}\xi (x)>u-(\log u)^{2}\}\cup \{\sup_{x\in \lbrack \tau _{\ast
}+u^{-1/2+\delta /2},L-u^{-1/2+\delta }]}\xi (x)>u-(\log u)^{2}\}
\]%
and thus
$$P(J_b^c, E_1) = o(u^{-1}e^{-u^2/2}).$$
We proceed to the following lemma to complete the proof of the proposition.

\begin{lemma}
\label{LemL2} Let $(w,y,z)$ defined as in Section \ref{sec:3.1}. For $\varepsilon >0$, let%
\[
L_{b}=\{|w|<u^{3\delta },|y|<u^{1/2+4\delta },|z|<u^{1/2+4\delta }\}
\]%
Under conditions of Theorem \ref{ThmMain}, we have that%
\[
P\left( L_{b}^{c},J_{b}^{c},E_{1}\right) =o(u^{-1}e^{-u^{2}/2}).
\]
\end{lemma}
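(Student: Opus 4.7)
The plan is to prove $P(L_b^c, J_b^c, E_1) = o(u^{-1}e^{-u^2/2})$ by decomposing $L_b^c$ and handling each piece through a hybrid of density estimates (from Lemma \ref{LemDen}, where $S(w,y,z) = (u+w)^2 + \Delta^2(w+z)^2/(A-\Delta^2) + y^2/\Delta$) and the sample path constraint imposed by $J_b^c$. Starting from the integral representation $P(L_b^c, J_b^c, E_1) = \Delta \int_{L_b^c} P(J_b^c, E_1 \mid w,y,z)\, h(w,y,z)\, dw\, dy\, dz,$ I would partition $L_b^c$ by a union bound into $R_1 = \{w \geq u^{3\delta}\}$, $R_2 = \{|y| \geq u^{1/2+4\delta}\}$, $R_3 = \{|z| \geq u^{1/2+4\delta},\ |w| < u^{3\delta}\}$, and $R_4 = \{w \leq -u^{3\delta},\ |y| < u^{1/2+4\delta},\ |z| < u^{1/2+4\delta}\}$.

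The three regions $R_1, R_2, R_3$ are the easy cases in which the Gaussian density alone is already summable to $o(u^{-1}e^{-u^2/2})$, after bounding $P(J_b^c, E_1 \mid w,y,z) \leq 1$. On $R_1$ the $(u+w)^2$ term contributes an extra factor $\exp(-u^{1+3\delta})$; on $R_2$ the $y^2/\Delta$ term yields $\exp(-u^{1+8\delta}/(2\Delta))$; on $R_3$ the inequality $(w+z)^2 \geq \tfrac{1}{2}z^2$ for $|w| < u^{3\delta} \ll |z|$ produces the analogous super-polynomial decay. Straightforward integration in each case gives the desired bound.

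The hard case is $R_4$, where the density does not decay past $e^{-u^2/2}$ in the $w \to -\infty$ direction. Here I would invoke $J_b^c$: since $\xi(x) \leq u - (\log u)^2$ outside the neighborhood $\mathcal N = [x_*-u^{-1/2+\delta/2}, x_*+u^{-1/2+\delta/2}]$, and since the boundedness of $F(x) - \int_0^L F(t)e^{\sigma\xi(t)}dt/\int_0^L e^{\sigma\xi(t)}dt$ gives $|v'(x)| \leq Ce^{\sigma\xi(x)}$, the event $E_1$ forces $\sup_{\mathcal N}\xi > u - O(\log u)$. Using the Taylor expansion $\xi(x) = u + w + y(x-x_*) - \tfrac{\Delta(u-z)}{2}(x-x_*)^2 + g(x-x_*) + \vartheta(x-x_*)$, the deterministic part is maximized at $u + w + y^2/(2\Delta(u-z))$. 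I would then further split $R_4$ along $|y| \leq \kappa\, u^{1/2+3\delta/2-\varepsilon}$ versus its complement. On the first sub-piece, the deterministic supremum stays below $u - \tfrac12 u^{3\delta}$, so $E_1$ would require $\sup_{\mathcal N}|g| \geq \tfrac13 u^{3\delta}$; since $\mathrm{Var}(g(x-x_*)) = O(|x-x_*|^6) = O(u^{-3+3\delta})$ on $\mathcal N$, Borel--TIS (Lemma \ref{LemBorel}) makes this probability super-exponentially small in $u$. On the complementary sub-piece, $y^2/\Delta \geq \Omega(u^{1+3\delta-2\varepsilon})$, so the density itself decays by $\exp(-\Omega(u^{1+3\delta-2\varepsilon}))$, more than enough to absorb the $u^{-1}$ factor.

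The main obstacle is the balancing act in $R_4$: one cannot win by density alone (the Gaussian factor is $\Theta(1)$ when $w \to -\infty$), nor by the sample path argument alone (the Taylor correction $y^2/(2\Delta u)$ can recover the loss from negative $w$), so the proof must intertwine the two. The key observation that makes it go through is that recovering $|w| = u^{3\delta}$ via the $y^2$ term requires $|y| = \Omega(u^{1/2+3\delta/2})$, at which point the $y^2/\Delta$ term in $S$ kicks in and dominates; a careful choice of the cutoff $\kappa u^{1/2+3\delta/2-\varepsilon}$ ensures both sub-pieces are controlled with room to spare. The remaining calculations are routine integration against a Gaussian density with extra exponential decay.
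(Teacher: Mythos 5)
Your plan has the right ingredients — Borel--TIS on the sample path to kill very negative $w$, Gaussian density for the rest — but the \emph{parallel} union-bound decomposition you chose puts some of the density arguments in regions where they cannot work, and the paper's own proof is structured differently precisely to avoid this.

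The most serious problem is $R_2=\{|y|\geq u^{1/2+4\delta}\}$. You leave $w$ and $z$ unconstrained on $R_2$ and claim that after bounding $P(J_b^c,E_1\mid w,y,z)\leq 1$ the Gaussian density is ``already summable to $o(u^{-1}e^{-u^2/2})$''. But with $S(w,y,z)=(u+w)^2+y^2/\Delta+\Delta^2(w+z)^2/(A-\Delta^2)$, marginalizing $w$ gives $\int e^{-(u+w)^2/2}\,dw=O(1)$ because $(u+w)^2$ vanishes near $w=-u$; the density on $R_2$ therefore integrates to a quantity of order $e^{-\Omega(u^{1+8\delta})}$ \emph{without} the crucial $e^{-u^2/2}$ prefactor, and $e^{-\Omega(u^{1+8\delta})}\gg u^{-1}e^{-u^2/2}$ whenever $8\delta<1$. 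In other words, the $y^2/\Delta$ term cannot be treated as an ``extra factor'' on top of $e^{-u^2/2}$ unless you have \emph{already} pinned $|w|$ to be $o(u)$, and $R_2$ carries no such constraint. Exactly the same defect appears in the large-$|y|$ sub-piece of $R_4$ (there $w\leq -u^{3\delta}$ is consistent with $w\approx -u$, hence $(u+w)^2\approx 0$, and the decay $e^{-\Omega(u^{1+3\delta-2\varepsilon})}$ is again far larger than $u^{-1}e^{-u^2/2}$). A lesser but real defect: your four regions do not cover $L_b^c$ — the set $\{w\leq -u^{3\delta},\ |y|<u^{1/2+4\delta},\ |z|\geq u^{1/2+4\delta}\}$ falls into none of $R_1,\dots,R_4$.

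The repair, which is what the paper does, is to proceed \emph{sequentially} rather than by a parallel partition. Step~1: show $P(|w|>u^{3\delta},\,J_b^c,\,E_1)=o(u^{-1}e^{-u^2/2})$ with no restriction on $y$ or $z$. On $J_b^c\cap E_1$ the process must exceed $u-O(\log u)$ somewhere on the window $\mathcal N=[x_*-u^{-1/2+\delta/2},x_*+u^{-1/2+\delta/2}]$, where $\xi(\cdot)-\xi(x_*)$ has variance $O(u^{-1+\delta})$; Borel--TIS then makes this incompatible with $\xi(x_*)<u-u^{3\delta}$ (and $w>u^{3\delta}$ is handled directly by the Gaussian tail). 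This is in the same spirit as your Borel--TIS argument for the small-$y$ corner of $R_4$, but it works for all $y,z$ at once because only the sample path, not the density, is used. Step~2: having restricted to $|w|<u^{3\delta}$, bound $P(|w|<u^{3\delta},\,|z|>u^{1/2+4\delta},\dots)$ and $P(|w|<u^{3\delta},\,|y|>u^{1/2+4\delta},\dots)$ by the density; now $(u+w)^2\geq u^2-O(u^{1+3\delta})$ furnishes the needed $e^{-u^2/2}$ base, and the additional $y^2/\Delta$ or $\Delta^2(w+z)^2/(A-\Delta^2)$ decay (of order $u^{1+8\delta}$, which dominates $u^{1+3\delta}$) gives the $o(1)$ correction. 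The ordering — prune $w$ first, then $y$ and $z$ under the $w$-constraint — is not a stylistic choice but is what makes the density step valid.
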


\begin{proof}
Note that $|v^{\prime }(x)|>b$ implies that $\xi (x)>\log b-\kappa
_{0}=u-O(\log u)$ for some $\kappa _{0}>0$. Thus, on the set $J_{b}^{c}$, $%
E_{1}$ implies that $\sup_{[x_{\ast }-u^{-1/2+\delta/2 },x_{\ast
+}u^{-1/2+\delta/2 }]}\xi (x)>\frac{\log b}{\sigma }-(\log u)^{2}$.
Therefore, we have that%
\[
P(|w|>u^{3\delta },F_{b}^{c},E_{b})\leq P(|w|>u^{3\delta },\sup_{[x_{\ast
}-u^{-1/2+\delta/2 },x_{\ast +}u^{-1/2+\delta/2 }]}\xi (x)>\frac{\log b%
}{\sigma }-(\log u)^{2})=o(u^{-1}e^{-u^{2}/2}),
\]%
where the last step is an application of Borel-TIS\ lemma. Furthermore, by
simply bound of Gaussian distribution, we have that%
\[
P(|w|<u^{3\delta },|z|>u^{1/2+4\delta
},F_{b}^{c},E_{b})=o(u^{-1}e^{-u^{2}/2}),
\]%
and%
\[
P(|w|<u^{3\delta },|y|>u^{1/2+4\delta
},F_{b}^{c},E_{b})=o(u^{-1}e^{-u^{2}/2}).
\]%
We thus conclude the proof.
\end{proof}

The results of Lemmas \ref{LemL1}, \ref{LemL11}, and \ref{LemL2} immediately
lead to the conclusion of Proposition \ref{PropLocal}.
\end{proof}

\bigskip

\begin{proof}[Proof of Proposition \protect\ref{PropG}]
Note that $g(x)$ is independent of $(w,y,z)$ and $\mathcal L_{u}$ only depends on $(w,y,z)$. Therefore,%
\begin{eqnarray*}
P\left( \sup_{|x|>u^{-1/2+8\delta }}
[|g(x)|-\delta ^{\prime
}ux^{2}]>0,
~\mathcal  L_{u}\right) &=&P\left( \sup_{|x|>u^{-1/2+8\delta }}[|g(x)|-\delta
^{\prime }ux^{2}]>0\right) P(\mathcal L_{u}) \\
&=&o(u^{-1}e^{-u^{2}/2}).
\end{eqnarray*}%
The last step is a direct application of the Borel-TIS lemma (Lemma \ref{LemBorel}) and the fact that $P(\mathcal L_u) = O(e^{-u^2/2 +O(u^{1+ 3\delta})})$. With a similar
argument, we obtain the second bound.
\end{proof}

\bigskip

\begin{proof}[Proof of Propositions \ref{PropLocal2} and \ref{PropLocal1}]
The proofs of these two propositions are completely analogous to that of Proposition  \ref{PropLocal}, that is, basically a repeated application of Borel-TIS lemma and the change of measure $Q_\zeta$. Therefore, we omit the details.
\end{proof}

\section{Proof of the Lemmas}

\begin{proof}[Proof of Lemma \protect\ref{LemInt}]
On the set $|x-x_{\ast}|<u^{-1/2+8\delta }$ and $\mathcal L_{u}^{\prime }$, we have $%
s=O(u^{8\delta })$ and thus
\[
\frac{y^{3}s}{(u-z)^{5/2}}=O(u^{-1 +20\delta }),\frac{y^{2}s^{2}%
}{(u-z)^{2}}=O(u^{-1+24\delta }),\frac{s^{4}}{(u-z)}%
=O(u^{-1+32\delta }).
\]%
Let $X$ be a standard Gaussian random variable. We conclude the proof by the
following calcuation%
\begin{eqnarray*}
&&\int_{|x-x_{\ast}|<u^{-1/2+8\delta }}e^{\sigma \lbrack -\frac{s^{2}}{2}-%
\frac{Ay^{3}}{\Delta ^{7/2}(u-z)^{5/2}}s-\frac{Ay^{2}}{4\Delta ^{3}(u-z)^{2}}%
s^{2}+\frac{A}{24\Delta ^{2}(u-z)}s^{4}]} ds \\
&=&e^{o(u^{-1})}\int_{|x-x_{\ast}|<u^{-1/2+8\delta }}e^{-\frac{\sigma s^{2}}{%
2}}
\times \left(1-\frac{\sigma Ay^{3}}{\Delta ^{7/2}(u-z)^{5/2}}s-\frac{\sigma Ay^{2}}{%
4\Delta ^{3}(u-z)^{2}}s^{2}+\frac{\sigma A}{24\Delta ^{2}(u-z)}s^{4}\right)ds \\
&=&e^{o(u^{-1})}\sqrt{\frac{2\pi }{\sigma }}E\left[1-\frac{A\sigma ^{1/2}y^{3}X}{%
\Delta ^{7/2}(u-z)^{5/2}}-\frac{Ay^{2}X^{2}}{4\Delta ^{3}(u-z)^{2}}+\frac{%
AX^{4}}{24\Delta ^{2}\sigma (u-z)}\right] \\
&=&\sqrt{\frac{2\pi }{\sigma }}\exp \left\{ -\frac{Ay^{2}}{4\Delta
^{3}(u-z)^{2}}+\frac{A}{8\Delta ^{2}\sigma (u-z)}+o(u^{-1})\right\} \\
&=&\sqrt{\frac{2\pi }{\sigma }}\exp \left\{ -\frac{Ay^{2}}{4\Delta
^{3}(u-z)^{2}}+\frac{A}{8\Delta ^{2}\sigma u}+o(u^{-1})\right\} .
\end{eqnarray*}
\end{proof}

\bigskip

\begin{proof}[Proof of Lemma \ref{Lemf1}]
We use the result of Lemma \ref{LemInt} and the Taylor expansion%
\[
F(x)-F(t)=p(x)(x-t)-\frac{1}{2}p^{\prime }(x)(x-t)^{2}+\frac{1}{6}p^{\prime
\prime }(x)(x-t)^{3}+o(x-t)^{4}.
\]%
Recall the change of variable
$$s(t)= \sqrt{\Delta (u-z)}\left( t-x_{\ast}-\frac{y}{\Delta (u-z)}\right)$$
as in \eqref{eqn:schange}. We apply it to the spatial index $t$.
Note that $t-x_{\ast}-s(t)/\sqrt{\Delta (u-z)}=y/(\Delta (u-z))$ and $%
x-t=\gamma -s(t)/\sqrt{\Delta (u-z)}$.
 We perform the same splitting as in (\ref{split}), insert the result in \eqref{deno}, use the expansion of $\xi$ in \eqref{xi}, and obtain that
\begin{eqnarray*}
&&\left( \int_{0}^{L}e^{\sigma \xi (t)}dt\right)
^{-1}\int_{0}^{L}(F(x)-F(t))e^{\sigma \xi (t)}dt \\
&=&\exp \left\{ \frac{Ay^{2}}{4\Delta ^{3}(u-z)^{2}}-\frac{A}{8\Delta
^{2}\sigma (u-z)}+\omega (u)+o(u^{-1})\right\} \\
&&\times
 \int_{|s|\leq u^{8\delta}}
 \bigg [
 p(x)\Big (\gamma -\frac{s}{\sqrt{\Delta (u-z)}%
}\Big)- \frac{1}{2}p^{\prime }(x)
\Big(\gamma -\frac{s}{\sqrt{\Delta (u-z)}}\Big)^{2} \\
&&\quad\quad \quad \quad~~~ +\frac{1}{6}p^{\prime \prime }(x)
\Big(\gamma -\frac{s}{\sqrt{\Delta (u-z)}}\Big)^{3}
+o(u^{-3/2})
\bigg] \\
&&\qquad \times  \sqrt{\frac{\sigma }{2\pi }}e^{\sigma \lbrack -\frac{s^{2}}{2}-
\frac{Ay^{3}}{3\Delta ^{7/2}(u-z)^{5/2}}s-\frac{Ay^{2}}{4\Delta ^{3}(u-z)^{2}%
}s^{2}+\frac{A}{24\Delta ^{2}(u-z)}s^{4}]}ds
\end{eqnarray*}%
We rewrite the above integral by pulling out the Gaussian density and
expanding the exponential term in the last row
\begin{eqnarray*}
&=&\exp \left\{\frac{Ay^{2}}{4\Delta ^{3}(u-z)^{2}}-\frac{A}{8\Delta
^{2}\sigma (u-z)}+\omega (u)+o(u^{-1})\right\} \\
&&\times
\int _{|s|\leq u^{8\delta}|}\sqrt{\frac{\sigma }{2\pi }}e^{-\frac{\sigma s^{2}}{2}} \\
&&\quad \times \left[ p(x)\Big(\gamma -\frac{s}{\sqrt{\Delta (u-z)}}\Big)-\frac{1}{2}%
p^{\prime }(x)\Big(\gamma -\frac{s}{\sqrt{\Delta (u-z)}}\Big)^{2}+\frac{1}{6}%
p^{\prime \prime }(x)\Big(\gamma -\frac{s}{\sqrt{\Delta (u-z)}}\Big)^{3}\right] \\
&&\quad \times \left[ 1-\frac{\sigma Ay^{3}}{3\Delta ^{7/2}(u-z)^{5/2}}s-\frac{%
\sigma Ay^{2}}{4\Delta ^{3}(u-z)^{2}}s^{2}+\frac{\sigma A}{24\Delta ^{2}(u-z)%
}s^{4}\right] ds.
\end{eqnarray*}%
Similar to Lemma \ref{LemInt}, we further evaluate the above integral by computing moments of $N(0,\sigma
^{-1/2})$ and obtain that (we omit several cross terms that can be absorbed  by $o(u^{-1})$)
\begin{eqnarray*}
&&F(x)-\frac{\int_{0}^{L}F(t)e^{\sigma \xi (t)}dt}{\int_{0}^{L}e^{\sigma \xi
(t)}dt} \\
&=&\exp \left\{ \frac{Ay^{2}}{4\Delta ^{3}(u-z)^{2}}-\frac{A}{8\Delta
^{2}\sigma (u-z)}+\omega (u)+o(u^{-1})\right\}    \\
&&\times
\bigg \lbrack p(x)\gamma -\frac{p^{\prime }(x)}{2}
\left(\gamma ^{2}+\frac{1}{\sigma \Delta (u-z)}\right) +\frac{p^{\prime \prime }(x)}{6}\left(\gamma ^{3}+\frac{3\gamma }{\sigma \Delta (u-z)}\right) \\
&&~~~~~+p(x)\frac{Ay^{3}}{3\Delta ^{4}(u-z)^{3}}-p(x)\gamma \frac{Ay^{2}}{4\Delta
^{3}(u-z)^{2}}+p(x)\gamma \frac{A}{8\sigma \Delta ^{2}(u-z)}
\bigg ].
\end{eqnarray*}%
We take out the factor ``$p(x)\gamma $'' from the bracket and continue the calculation
\begin{eqnarray*}
&=&\exp \left\{ \frac{Ay^{2}}{4\Delta ^{3}(u-z)^{2}}-\frac{A}{8\Delta
^{2}\sigma (u-z)}+\omega (u)+o(u^{-1})\right\}    \\
&&\times p(x)\gamma \exp \Big[-\frac{p^{\prime }(x)}{2p(x)\gamma }(\gamma ^{2}+\frac{1%
}{\sigma \Delta (u-z)})+\frac{p^{\prime \prime }(x)}{6p(x)}(\gamma ^{2}+%
\frac{3}{\sigma \Delta (u-z)}) \notag\\
&&~~~~~~~~~~~~~~~~~~+\frac{Ay^{3}}{3\Delta ^{4}(u-z)^{3}\gamma }- \frac{Ay^{2}}{4\Delta ^{3}(u-z)^{2}}+ \frac{A}{8\sigma \Delta ^{2}(u-z)}\Big].
\end{eqnarray*}%
We further simplify the above display and obtain that
\begin{eqnarray*}
&=&p(x)\gamma \exp \Big[-\frac{p^{\prime }(x)}{2p(x)\gamma }(\gamma ^{2}+\frac{1%
}{\sigma \Delta (u-z)})+\frac{p^{\prime \prime }(x)}{6p(x)}(\gamma ^{2}+%
\frac{3}{\sigma \Delta (u-z)}) \notag\\
&&~~~~~~~~~~~~~~~~~~+\frac{Ay^{3}}{3\Delta ^{4}(u-z)^{3}\gamma }+o(u^{-1})+\omega (u)\Big].
\end{eqnarray*}
\end{proof}

\bigskip

\begin{proof}[Proof of Lemma \ref{mmmA}]
Let $\mathcal A$ be defined as in \eqref{mmA}.
Note that $p'(x_*) = 0$ and $p^{\prime }(x)\sim p''(x_{\ast})(\gamma +y/\Delta (u-z))$.
We apply Taylor expansion of the term $\log \frac{p(x_{\ast} + \gamma_*+\Delta ^{-1}(u-z)^{-1}y)}{p(x_{\ast})} $ in  \eqref{mmA} and expand the second row of \eqref{mmA}.
Thus, $\mathcal{A}$ can be further simplified to
\begin{eqnarray*}
\mathcal{A} &\mathcal{=}&\sigma w+\frac{\sigma y^{2}}{2\Delta u}+\frac{%
\sigma }{2\Delta u^{2}}y^{2}z-\frac{\sigma Ay^{4}}{8\Delta ^{4}(u-z)^{3}}+%
\frac{\sigma \Delta z}{2}\gamma _{\ast }^{2} \\
&&-\frac{\sigma Ay^{3}}{3\Delta ^{3}(u-z)^{2}}\gamma _{\ast }-\frac{\sigma
Ay^{2}}{4\Delta ^{2}(u-z)}\gamma _{\ast }^{2}+\frac{\sigma Au}{24}\gamma
_{\ast }^{4} \\
&&-\frac{p^{\prime \prime }(x_{\ast})}{2p(x_{\ast})}(\gamma _{\ast }+\frac{%
y}{\Delta (u-z)})(\gamma _{\ast }+\frac{1}{\sigma \Delta (u-z)\gamma _{\ast }%
}) \\
&&+\frac{p^{\prime \prime }(x_{\ast})}{6p(x_{\ast})}(\gamma _{\ast }^{2}+%
\frac{3}{\sigma \Delta (u-z)})+\frac{Ay^{3}}{3\Delta ^{4}(u-z)^{3}\gamma
_{\ast }} \\
&&+\frac{p^{\prime \prime }(x_{\ast})}{2p(x_{\ast})}(\gamma _{\ast }+\frac{%
y}{\Delta (u-z)})^{2} + o(y^2 u^{-2})+O(z^2 /u^2).
\end{eqnarray*}%
Note that $\gamma_* = u^{-1/2}\Delta ^{-1/2}\sigma ^{-1/2}$. The term
$$-\frac{p^{\prime \prime }(x_{\ast})}{2p(x_{\ast})}\frac{y}{\Delta (u-z)}(\gamma _{\ast }+\frac{1}{\sigma \Delta (u-z)\gamma _{\ast }%
})$$
expanded from the third row cancels the cross term
$$\frac{\gamma _* p^{\prime \prime }(x_{\ast})}{p(x_{\ast})}\frac{y}{\Delta (u-z)}$$
expanded from the quadratic term in the last row.
%The cross terms ``$y\gamma _{\ast }$'' in the third and the last rows can be cancelled (with a small and ignorable term).
Then, $\mathcal A$ is further simplified to
\begin{eqnarray}
\mathcal{A} &=&\sigma w+\frac{\sigma y^{2}}{2\Delta u}+\frac{\sigma }{%
2\Delta u^{2}}y^{2}z-\frac{\sigma Ay^{4}}{8\Delta ^{4}(u-z)^{3}}+\frac{%
\sigma \Delta z}{2}\gamma _{\ast }^{2}  \label{A} \\
&&-\frac{\sigma Ay^{3}}{3\Delta ^{3}(u-z)^{2}}\gamma _{\ast }-\frac{\sigma
Ay^{2}}{4\Delta ^{2}(u-z)}\gamma _{\ast }^{2}+\frac{\sigma A(u-z)}{24}\gamma
_{\ast }^{4}  \nonumber \\
&&-\frac{p^{\prime \prime }(x_{\ast})}{2p(x_{\ast})}(\gamma _{\ast }^{2}+%
\frac{1}{\sigma \Delta (u-z)})  \nonumber \\
&&+\frac{p^{\prime \prime }(x_{\ast})}{6p(x_{\ast})}(\gamma _{\ast }^{2}+%
\frac{3}{\sigma \Delta (u-z)})+\frac{Ay^{3}}{3\Delta ^{4}(u-z)^{3}\gamma
_{\ast }}  \nonumber \\
&&+\frac{p^{\prime \prime }(x_{\ast})}{2p(x_{\ast})}(\gamma _{\ast }^{2}+%
\frac{y^{2}}{\Delta ^{2}(u-z)^{2}})+o(y^2 u^{-2})+O(z^2 /u^2).  \nonumber
\end{eqnarray}%
Furthermore, the term $-\frac{\sigma Ay^{3}}{3\Delta ^{3}(u-z)^{2}}\gamma _{\ast }$ in the second row cancels $\frac{Ay^{3}}{3\Delta ^{4}(u-z)^{3}\gamma_{\ast }}$ in the fourth row.
We now plug in $\gamma _{\ast }^{2}=\Delta ^{-1}\sigma ^{-1}u^{-1}$ and obtain
that%
\begin{eqnarray*}
\mathcal{A} &=&\sigma w+\frac{\sigma y^{2}}{2\Delta u}+\frac{\sigma }{%
2\Delta u^{2}}y^{2}z-\frac{\sigma Ay^{4}}{8\Delta ^{4}u^{3}}+\frac{z}{2u} \\
&&-\frac{Ay^{2}}{4\Delta ^{3}u^{2}}+\frac{A}{24\sigma \Delta ^{2}u}-%
\frac{p^{\prime \prime }(x_{\ast})}{3p(x_{\ast})\sigma \Delta u}+\frac{%
p^{\prime \prime }(x _{\ast })}{2p(x _{\ast })}(\frac{1}{\sigma
\Delta u}+\frac{y^{2}}{\Delta ^{2}u^{2}})+o(u^{-1}) +O(z^2/u)\\
&=&\sigma w+\frac{\sigma y^{2}}{2\Delta u}+\frac{\sigma }{2\Delta u^{2}}%
y^{2}z+\frac{z}{2u}+\frac{A}{24\sigma \Delta ^{2}u}+\frac{p^{\prime \prime }(x_{\ast})}{6p(x_{\ast})\sigma \Delta u} \\
&&-\frac{\sigma Ay^{4}}{8\Delta u^{3}}+\frac{y^{2}}{u^{2}}(-\frac{A}{4\Delta
^{3}}+\frac{p^{\prime \prime }(x_{\ast})}{2p(x_{\ast})\Delta ^{2}}%
)+o(u^{-1}+y^2 u^{-2})+O(z^2 /u^2).
\end{eqnarray*}
\end{proof}

\bigskip

\begin{proof}[Proof of Lemma \ref{Lemf2}]
Using the second change of variable in \eqref{change}, the denominator in \eqref{f2} is
\begin{eqnarray*}
\int_{0}^{L}e^{\sigma \xi(t)}dt &=&e^{c_{\ast }}\int_{0}^{L}\exp \Big\{\sigma
\Big[ -\frac{s^{2}}{2}-\frac{Ay^{3}}{3\Delta ^{7/2}u_{L}^{5/2}}s-\frac{%
Ay^{2}}{4\Delta ^{3}u_{L}^{2}}s^{2}+\frac{A}{24\Delta ^{2}u_{L}}s^{4}\Big]\Big\}dt .
\end{eqnarray*}
Let $Z$ be a standard Gaussian random variable following $N(0,1)$. With a similar splitting in \eqref{split} and the derivation in Lemma \ref{LemInt} and noticing the boundary constraint that \eqref{boundary}, we apply Taylor expansion on the integrand and have that
\begin{eqnarray*}
&=&\frac{\sqrt{2\pi }e^{c_{\ast }+o(u_{L}^{-1})}}{\sqrt{\Delta \sigma
(u_{L}-z)}}e^{\omega (u_{L})} \\
&&\times E\left[ 1-\frac{\sigma ^{1/2}Ay^{3}}{3\Delta ^{7/2}u_{L}^{5/2}}Z-%
\frac{Ay^{2}}{4\Delta ^{3}u_{L}^{2}}Z^{2}+\frac{A}{24\Delta ^{2}\sigma u_{L}}%
Z^{4};Z\leq \sqrt{1-\frac{z}{u_{L}}}\zeta _{L}-\sqrt{\frac{\sigma }{\Delta
(u_{L}-z)}}y\right] \\
&=&\frac{\sqrt{2\pi }e^{c_{\ast }+o(u_{L}^{-1})}}{\sqrt{\Delta \sigma
(u_{L}-z)}}e^{\omega (u_{L})+O(y^{3}/u_{L}^{5/2}+y^{2}/u_{L}^{2})} \\
&&\times E\left[ 1+\frac{A}{24\Delta ^{2}\sigma u_{L}}Z^{4};Z\leq \sqrt{1-%
\frac{z}{u_{L}}}\zeta _{L}-\sqrt{\frac{\sigma }{\Delta (u_{L}-z)}}y\right],
\end{eqnarray*}
where $c_{\ast }=\sigma (u_{L}+w+\frac{y^{2}}{2\Delta (u_{L}-z)}-\frac{Ay^{4}}{8\Delta ^{4}(u_{L}-z)^{3}})$ and $\omega (u)=O(\sup_{|x|\leq u^{-1/2+8\delta }}|g(x)|)$.
The expectation in the previous display can be written as
\begin{eqnarray*}
&&E\left[ 1+\frac{A}{24\Delta ^{2}\sigma u_{L}}Z^{4};Z\leq \sqrt{1-\frac{z}{u_{L}}}\zeta _{L}-\sqrt{\frac{\sigma }{\Delta (u_{L}-z)}}y\right]\\
&=&P\Big[Z\leq \sqrt{1-\frac{z}{u_{L}}}\zeta _{L}-\sqrt{\frac{\sigma }{%
\Delta (u_{L}-z)}}y\Big] \\
&&\times \exp \left\{ \frac{A}{24\Delta ^{2}\sigma u_{L}}E(Z^{4}|Z\leq \zeta
_{L})+\omega
(u_{L})+O(y^{3}/u_{L}^{5/2}+y^{2}/u_{L}^{2} + y /u^{3/2})\right\}
\end{eqnarray*}
We use the fact that  $E(Z^{4}|Z\leq \zeta_{L}) = E(Z^{4}|Z\leq\sqrt{1-\frac{z}{u_{L}}}\zeta _{L}-\sqrt{\frac{\sigma }{%
\Delta (u_{L}-z)}}y) + o(1+ yu^{-1/2}).$
We continue the calculations and obtain that
\begin{eqnarray*}
\int_{0}^{L}e^{\sigma \xi(t)}dt&=&\frac{\sqrt{2\pi }e^{c_{\ast }+o(u_{L}^{-1})}}{\sqrt{\Delta \sigma
(u_{L}-z)}}P\Big[Z\leq \sqrt{1-\frac{z}{u_{L}}}\zeta _{L}-\sqrt{\frac{\sigma }{%
\Delta (u_{L}-z)}}y\Big] \\
&&\times \exp \left\{ \frac{A}{24\Delta ^{2}\sigma u_{L}}E(Z^{4}|Z\leq \zeta
_{L})+\omega
(u_{L})+O(y^{3}/u_{L}^{5/2}+y^{2}/u_{L}^{2} + y /u^{3/2})\right\}.
\end{eqnarray*}%
We now proceed to the numerator of \eqref{f2}. Using  Taylor expansion
\[
F(x)-F(t)=p(x)(x-t)-\frac{1}{2}p^{\prime }(x)(x-t)^{2}+\frac{1}{6}p^{\prime
\prime }(x)(x-t)^{3}+o(x-t)^{3},
\]%
the numerator of \eqref{f2} is (with the splitting as in \eqref{split})
\begin{eqnarray*}
	&&\int_{0}^{L}(F(x)-F(t))e^{\sigma \xi(t)}dt \\
	&=&\frac{e^{c_{\ast }+\omega(u_{L})+o(u_{L}^{-1})}}{\sqrt{\Delta (u_{L}-z)}}\times \int_{-u^{8\delta}}^{\sqrt{\frac{(1-z/u)}{\sigma }}\zeta _{L}-\frac{y}{\sqrt{\Delta (u-z)}}} \\
	&&\left[ p(x)(\gamma -\frac{s}{\sqrt{\Delta (u_{L}-z)}})-\frac{1}{2}%
p^{\prime }(x)(\gamma -\frac{s}{\sqrt{\Delta (u_{L}-z)}})^{2}+\frac{1}{6}%
p^{\prime \prime }(x)(\gamma -\frac{s}{\sqrt{\Delta (u_{L}-z)}}%
)^{3}+o(u_{L}^{-3/2})\right] \\
	&&\times e^{\sigma \Big \{ -\frac{s^{2}}{2}-\frac{Ay^{3}}{3\Delta
^{7/2}(u_{L}-z)^{5/2}}s-\frac{Ay^{2}}{4\Delta ^{3}(u_{L}-z)^{2}}s^{2}+\frac{A%
}{24\Delta ^{2}(u_{L}-z)}s^{4}\Big \}}ds \\
	&=&\sqrt{\frac{2\pi }{\Delta \sigma (u_{L}-z)}}e^{c_{\ast }+\omega
(u_{L})+o(u_{L}^{-1})} \\
	&&\times E\Big\{p(x)(\gamma -\frac{Z}{\sqrt{\Delta \sigma (u_{L}-z)}})-\frac{%
p^{\prime }(x)}{2}(\gamma -\frac{Z}{\sqrt{\Delta \sigma (u_{L}-z)}})^{2}+%
\frac{p^{\prime \prime }(x)}{6}(\gamma -\frac{Z}{\sqrt{\Delta \sigma
(u_{L}-z)}})^{3} \\
	&&~~~~~+\frac{A p(x)}{24\Delta ^{2}\sigma ^{2}u_{L}}Z^{4}(\gamma -\frac{Z}{\sqrt{%
\Delta \sigma (u_{L}-z)}})\\
&&~~~~~ +O(y^{3}/u_{L}^{5/2}+y^{2}/u_{L}^{2}+u_{L}^{-2})~~; ~~Z\leq \sqrt{1-
\frac{z}{u_{L}}}\zeta _{L}-\sqrt{\frac{\sigma }{\Delta (u_{L}-z)}}y\Big\}.
\end{eqnarray*}%
Thus, %{for $\gamma - \frac{E(Z|Z<\zeta_L)}{\sqrt{\Delta\sigma u_L}}=\Theta(u^{-1/2})$},
the factor in (\ref{f2})\ is
\begin{eqnarray*}
&&\int_{0}^{L}(F(x)-F(t))\frac{e^{\sigma \xi(t)}}{\int_{0}^{L}e^{\sigma \xi(s)}ds%
}dt \\
&=&\exp \Big\{-\frac{A}{24\Delta ^{2}\sigma u_{L}}E\left( Z^{4}|Z\leq \zeta
_{L}\right) +\lambda (u_{L})+\omega (u_{L})\Big\} \\
&&\times E \Big\{ p(x)(\gamma -\frac{Z}{\sqrt{\Delta \sigma (u_{L}-z)}})-\frac{%
p^{\prime }(x)}{2}(\gamma -\frac{Z}{\sqrt{\Delta \sigma (u_{L}-z)}})^{2}+%
\frac{p^{\prime \prime }(x)}{6}(\gamma -\frac{Z}{\sqrt{\Delta \sigma
(u_{L}-z)}})^{3} \\
&& ~~~~~~+\frac{Ap(x)}{24\Delta ^{2}\sigma ^{2}u_{L}}Z^{4}(\gamma -\frac{Z}{\sqrt{%
\Delta \sigma (u_{L}-z)}})~~\Big\vert~~ Z\leq \sqrt{1-\frac{z}{u_{L}}}\zeta
_{L}-\sqrt{\frac{\sigma }{\Delta (u_{L}-z)}}y \Big\}
\end{eqnarray*}%
where $\lambda (u_{L})=O(y^{3}/u_{L}^{5/2}+y^{2}/u_{L}^{2} + y /u^{3/2})+o(u_{L}^{-1}+u_{L}^{-1}z)$. We take
out a factor $\sqrt{\Delta \sigma (u_{L}-z)}$ from the above expectation and
obtain that%
\begin{eqnarray*}
&=&\exp \Big\{-\frac{A}{24\Delta ^{2}\sigma u_{L}}E\left( Z^{4}|Z\leq \zeta
_{L}\right) +\lambda (u_{L})+\omega (u_{L})\Big \} \\
&&\frac{1}{\sqrt{\Delta \sigma u_{L}(1-z/u_{L})}}E\Big\{p(x)(\gamma \sqrt{\sigma
\Delta (u_{L}-z)}-Z)
-\frac{p'(x)}{2\sqrt{\sigma \Delta u_{L}}}(\gamma \sqrt{\sigma \Delta (u_{L}-z)}-Z)^{2} \\
&&+\frac{p^{\prime \prime }(x)}{6\sigma \Delta u_{L}}(\gamma \sqrt{\sigma
\Delta u_{L}}-Z)^{3}+\frac{Ap(x)}{24\Delta ^{2}\sigma ^{2}u_{L}}Z^{4}(\gamma
\sqrt{\sigma \Delta u_{L}}-Z)~~\Big\vert~~ Z\leq \sqrt{1-\frac{z}{u_{L}}}\zeta
_{L}-\sqrt{\frac{\sigma }{\Delta (u_{L}-z)}}y \Big\}.
\end{eqnarray*}%
Notice that in the last two terms of the above display and for the denominator of the second term in the second low, ``$u_L - z$'' is replaced by $u_L$. The error caused by this change can be absorbed into $\lambda (u_L)$.
Notice that
$$\frac{1}{\sqrt{\Delta \sigma u_{L}(1-z/u_{L})}} = \frac{e^{\frac{z}{2u_{L}}+ o(z/u_L)} }{\sqrt{\Delta \sigma u_{L}}}.$$
We further separate the expectation into two parts and obtain that%
\begin{eqnarray*}
&=&\exp \Big\{-\frac{A}{24\Delta ^{2}\sigma u_{L}}E\left( Z^{4}|Z\leq \zeta
_{L}\right) +\lambda (u_{L})+\omega (u_{L})\Big\} \times \frac{e^{\frac{z}{2u_{L}}} }{\sqrt{\Delta \sigma u_{L}}}\\
&&\times \Big\{E\Big[p(x)(\gamma \sqrt{\sigma \Delta (u_{L}-z)}-Z)\\
&&~~~~~~~~~~-\frac{p'(x)}{2\sqrt{\sigma \Delta u_{L}}}(\gamma \sqrt{\sigma \Delta (u_{L}-z)}-Z)^{2} ~~\Big\vert~~ Z\leq \sqrt{1-\frac{z}{u_{L}}}\zeta _{L}-\sqrt{\frac{\sigma }{\Delta (u_{L}-z)}}y \Big] \\
&&~~~+E\Big[\frac{p^{\prime
\prime }(x)}{6\sigma \Delta u_{L}}(\gamma \sqrt{\sigma \Delta u_{L}}-Z)^{3}\\
&&~~~~~~~~~~+\frac{Ap(x)}{24\Delta ^{2}\sigma ^{2}u_{L}}Z^{4}(\gamma \sqrt{\sigma
\Delta u_{L}}-Z)~~\Big \vert ~~Z\leq \sqrt{1-\frac{z}{u_{L}}}\zeta _{L}-\sqrt{%
\frac{\sigma }{\Delta (u_{L}-z)}}y \Big]\Big\}.
\end{eqnarray*}%
Thus, we conclude the proof.
\end{proof}

\bigskip

\begin{proof}[Proof of Lemma \ref{Lemvprime}]
Similar to the calculations resulting \eqref{xi1}, we obtain that
\begin{eqnarray*}
\xi (x) &=&u_{L}+w+\frac{y^{2}}{2\Delta (u_{L}-z)}-\frac{\Delta (u_{L}-z)}{2}%
\gamma ^{2}-\frac{A}{6\Delta }y(\gamma +\frac{y}{\Delta (u_{L}-z)})^{3}+%
\frac{Au_{L}}{24}(\gamma +\frac{y}{\Delta (u_{L}-z)})^{4} \\
&&+g(x-t_{L})+\vartheta (x-t_{L}) \\
&=&u_{L}+w+\frac{y^{2}}{2\Delta u_{L}}-\frac{\Delta (u_{L}-z)}{2}\gamma ^{2}
%+%
%\frac{\Delta \gamma ^{2}}{2}z
+\frac{Au_{L}}{24}\gamma
^{4}+o(u^{-1}y^{2})+g(x-t_{L})+\vartheta (x-t_{L}),
\end{eqnarray*}%
where $\vartheta(x)=O(u^{1/2+4\delta }x^{5}+ux^{6})$.
Combining the above expression and Lemma \ref{Lemf2}, we obtain that
\begin{eqnarray*}
v^{\prime }(x) &=&e^{\sigma \xi(x)}\int_0^L (F(x)-F(t))\frac{e^{\sigma \xi(t)}}{%
\int e^{\sigma \xi(s)}ds}dt \\
&=&\exp \Big\{\lambda (u_{L}) + O(y^2z u_L^{-2})+\omega (u_{L})+\sigma u_{L}+\sigma w+\frac{\sigma
y^{2}}{2\Delta u_{L}}+\frac{A\sigma
u_{L}}{24}\gamma ^{4}\Big\} \\
&&\times \frac{1}{\sqrt{\Delta \sigma u_{L}}}\exp \Big\{-\frac{\sigma \Delta (u_{L}-z)}{2}\gamma ^{2}+\frac{z}{2u_{L}}-\frac{A}{24\Delta ^{2}\sigma u_{L}}E\left(
Z^{4}|Z\leq \zeta _{L}\right) \Big\} \\
&&\Big\{E\Big[p(x)(\gamma \sqrt{\sigma \Delta (u_{L}-z)}-Z)\\
&&~~~~~~~~-\frac{p^{\prime }(x)}{2\sqrt{\sigma \Delta u_{L}}}(\gamma \sqrt{\sigma \Delta (u_{L}-z)}-Z)^{2}~~\Big\vert~~
Z\leq \sqrt{1-\frac{z}{u_{L}}}\zeta _{L}-\sqrt{\frac{\sigma }{\Delta
(u_{L}-z)}}y \Big] \\
&&~~+E\Big[\frac{p''(x)}{6\sigma \Delta u_{L}}(\gamma \sqrt{\sigma \Delta u_{L}}-Z)^{3} \\
&&~~~~~~~~~~~~+\frac{Ap(x)}{24\Delta ^{2}\sigma ^{2}u_{L}}Z^{4}(\gamma \sqrt{\sigma \Delta
u_{L}}-Z)~~\Big\vert~~ Z\leq \sqrt{1-\frac{z}{u_{L}}}\zeta _{L}-\sqrt{\frac{%
\sigma }{\Delta (u_{L}-z)}}y\Big]\Big\}
\end{eqnarray*}
Using Taylor expansion on the two expectation terms, we obtain that
\begin{eqnarray*}
&&E\Big[p(x)(\gamma \sqrt{\sigma \Delta (u_{L}-z)}-Z)\\
&&~~~~~~~~-\frac{p^{\prime }(x)}{2\sqrt{\sigma \Delta u_{L}}}(\gamma \sqrt{\sigma \Delta (u_{L}-z)}-Z)^{2}~~\Big\vert~~
Z\leq \sqrt{1-\frac{z}{u_{L}}}\zeta _{L}-\sqrt{\frac{\sigma }{\Delta
(u_{L}-z)}}y \Big] \\
&&~~+E\Big[\frac{p''(x)}{6\sigma \Delta u_{L}}(\gamma \sqrt{\sigma \Delta u_{L}}-Z)^{3} \\
&&~~~~~~~~~~~~+\frac{Ap(x)}{24\Delta ^{2}\sigma ^{2}u_{L}}Z^{4}(\gamma \sqrt{\sigma \Delta
u_{L}}-Z)~~\Big\vert~~ Z\leq \sqrt{1-\frac{z}{u_{L}}}\zeta _{L}-\sqrt{\frac{%
\sigma }{\Delta (u_{L}-z)}}y\Big]\\
&=&E\Big[p(x)(\gamma \sqrt{\sigma \Delta (u_{L}-z)}-Z)\\
&&~~~~~~~~-\frac{p^{\prime }(x)}{2\sqrt{\sigma \Delta u_{L}}}(\gamma \sqrt{\sigma \Delta (u_{L}-z)}-Z)^{2}~~\Big\vert~~
Z\leq \sqrt{1-\frac{z}{u_{L}}}\zeta _{L}-\sqrt{\frac{\sigma }{\Delta
(u_{L}-z)}}y \Big]\\
&&\times \exp \left\{ \frac{E\Big[\frac{p^{\prime \prime }(x)}{6\sigma \Delta u_{L}}(\gamma \sqrt{\sigma\Delta u_{L}}-Z)^{3}+\frac{Ap(x)}{24\Delta ^{2}\sigma ^{2}u_{L}}Z^{4}(\gamma
\sqrt{\sigma \Delta u_{L}}-Z)\left\vert Z\leq \zeta _{L}\right. \Big ]}{%
p(x)E(\gamma \sqrt{\sigma\Delta u_{L}}-Z\vert Z\leq \zeta _{L} )}+o(u_L^{-1}+ yu_L^{-1})\right\} .
\end{eqnarray*}
We insert the above identity back to the expression of $v'(x)$
and obtain that
\begin{eqnarray*}
v'(x) &=&\exp \Big\{\lambda (u_{L})+ o( yu_L^{-1}) +O(y^2zu_L^{-2})+\omega (u_{L})+\sigma u_{L}+\sigma w+\frac{\sigma
y^{2}}{2\Delta u_{L}}+\frac{A\sigma
u_{L}}{24}\gamma ^{4}\Big\} \\
&&\times\frac{1}{\sqrt{\Delta \sigma u_{L}}}\exp \Big\{\frac{z}{2u_{L}}-\frac{A}{24\Delta
^{2}\sigma u_{L}}E( Z^{4}|Z\leq \zeta _{L}) \Big\} \notag\\
&&\times H_{L,x}\Big(\gamma \sqrt{\sigma \Delta (u_{L}-z)},\sqrt{1-\frac{z}{u_{L}}}\zeta _{L}-\sqrt{\frac{\sigma }{\Delta (u_{L}-z)}}y; u_L \Big)\notag \\
&&\times\exp \left\{ \frac{E\Big[\frac{p^{\prime \prime }(x)}{6\sigma \Delta u_{L}}(\gamma \sqrt{\sigma\Delta u_{L}}-Z)^{3}+\frac{Ap(x)}{24\Delta ^{2}\sigma ^{2}u_{L}}Z^{4}(\gamma
\sqrt{\sigma \Delta u_{L}}-Z)\left\vert Z\leq \zeta _{L}\right. \Big ]}{%
p(x)E(\gamma \sqrt{\sigma\Delta u_{L}}-Z\vert Z\leq \zeta _{L} )}\right\} ,\notag
\end{eqnarray*}%
where
$$
H_{L,y}(x,\zeta; u)\triangleq e^{-\frac{x^{2}}{2}} \times
E\Big[p(y)(x-Z)- \frac{p^{\prime}(y)}{2\sqrt{\Delta \sigma u}}(x-Z)^{2}   ~\Big | ~
Z\leq \zeta \Big].
$$
\end{proof}

\bigskip

\begin{proof}[Proof of Lemma \ref{LemA}]
We insert $\gamma _{L}=\frac{\zeta _{L}}{\sqrt{\sigma \Delta u_{L}}}$ to the expression of $\mathcal{A}$ in \eqref{AA} and obtain that
\begin{eqnarray*}
\mathcal{A} &=&\lambda (u_{L})+ o( yu_L^{-1}) +O(y^2zu_L^{-2})+\sigma w+\frac{\sigma y^{2}}{2\Delta u_{L}}
%+\frac{\zeta _{L}^{2}}{2u_{L}}z
+\frac{A\zeta _{L}^{4}}{24\Delta ^{2}\sigma u_{L}}+\frac{z}{2u_{L}}-\frac{AE\left( Z^{4}|Z\leq \zeta _{L}\right) }{24\Delta ^{2}\sigma u_{L}}\notag \\
&&+ G_{L}\Big(\sqrt{1-\frac{z}{u_{L}}}\zeta _{L}-\sqrt{\frac{\sigma }{\Delta (u_{L}-z)}}y;u_L\Big ) - G_{L}(\zeta_L;u_L) \notag\\
&&+\frac{E[\frac{p^{\prime \prime }(L)}{6\sigma \Delta u_{L}}(\zeta _{L}-Z)^{3}+\frac{Ap(L)}{24\Delta ^{2}\sigma
^{2}u_{L}}Z^{4}(\zeta _{L}-Z)\left\vert Z\leq \zeta _{L}\right. ]}{%
p(L)E(\zeta _{L}-Z\left\vert Z\leq \zeta _{L}\right. )}.
\end{eqnarray*}%
Note that   $
\Xi _{L}=-\lim_{u_{L}\rightarrow \infty }\partial^2_\zeta G_{L}(\zeta_{L}, u_L)$. Then,%
\begin{eqnarray*}
\mathcal{A} &\mathcal{=}&\lambda (u_{L})+ o( yu_L^{-1}) +O(y^2zu_L^{-2})+\sigma w+\frac{\sigma y^{2}}{2\Delta u_{L}}+\frac{A\zeta _{L}^{4}}{24\Delta ^{2}\sigma u_{L}}+\frac{z}{2u_{L}}-\frac{AE\left( Z^{4}|Z\leq \zeta _{L}\right) }{24\Delta ^{2}\sigma u_{L}} \\
	&&+\frac{E[\frac{p^{\prime \prime }(L)}{6\sigma \Delta u_{L}}(\zeta
_{L}-Z)^{3}+\frac{Ap(L)}{24\Delta ^{2}\sigma ^{2}u_{L}}Z^{4}(\zeta
_{L}-Z)\left\vert Z\leq \zeta _{L}\right. ]}{p(L)E(\zeta _{L}-Z\vert Z\leq \zeta _{L})} \\
&&
%+\frac{z}{2u_{L}}(\zeta _{L}^{2}+1-p(L)\zeta _{L}P\left( Z\leq \zeta
%_{L}\right) )
-\frac{\Xi _{L}+o(1)}{2}\Big(\frac{\zeta _{L}z}{2u_{L}}+\sqrt{\frac{%
\sigma }{\Delta (u_{L}-z)}}y\Big)^{2}. \\
&=&\lambda (u_{L})+ o( yu_L^{-1}) +O(y^2zu_L^{-2})+\sigma w+\frac{\sigma y^{2}}{2\Delta u_{L}}+\frac{z}{2u_{L}}+\frac{\kappa_L }{u_{L}}
%+\frac{z}{2u_{L}}(\zeta _{L}^{2}+1-p\zeta _{L}P\left( Z\leq \zeta_{L}\right) )
-\frac{\Xi _{L}+o(1)}{2}\Big(\frac{\zeta _{L}z}{2u_{L}}+\sqrt{\frac{%
\sigma }{\Delta (u_{L}-z)}}y\Big)^{2}.
\end{eqnarray*}%
where $\kappa_L$ is given as in \eqref{kappa}.
%\[
%\kappa _{L}=\frac{A\zeta _{L}^{4}}{24\Delta ^{2}\sigma }-\frac{AE\left(
%Z^{4}|Z\leq \zeta _{L}\right) }{24\Delta ^{2}\sigma }+\frac{E\{\frac{%
%p^{\prime \prime }(L)}{6\sigma \Delta }(\zeta _{L}-Z)^{3}+\frac{Ap(L)}{%
%24\Delta ^{2}\sigma ^{2}}Z^{4}(\zeta _{L}-Z)\left\vert Z\leq \zeta
%_{L}\right. \}}{p(L)(\zeta _{L}-E(Z\left\vert Z\leq \zeta _{L}\right. ))}.
%\]%
\end{proof}

\bigskip

\begin{proof}[Proof of Lemma \ref{LemMinor}]
In this case that $\vert \sqrt{1-\frac{z}{u_{L}}}\zeta _{L}-\sqrt{\frac{\sigma }{\Delta (u_{L}-z)}}y - \zeta_L\vert> \varepsilon $, the maximum of $|v'(x)|$ is not necessarily attained at $x=L$. Note that this does not change the calculation very much except that the terms $p(x)$ and $p'(x)$ in $H_{x,L}$ may not be evaluated on the boundary $x=L$, but still in the region $[L- u^{-1/2 +\delta}, L]$. Therefore, maximizing \eqref{HH}, we have that
\begin{eqnarray*}
&&\sup_{x\in [L- u^{-1/2 +\delta}, L]}\log\Big| H_{L,x}(\gamma \sqrt{\sigma \Delta (u_{L}-z)},\sqrt{1-\frac{z}{u_{L}}}\zeta _{L}-\sqrt{\frac{\sigma }{\Delta (u_{L}-z)}}y;u_L)\Big|\\
& =& G_{L}\Big(\sqrt{1-\frac{z}{u_{L}}}\zeta _{L}-\sqrt{\frac{\sigma }{\Delta (u_{L}-z)}}y;u_L\Big )  + O(u^{-1/2 +\delta}).
\end{eqnarray*}
Therefore, we only need to add an $O(u^{-1/2+\delta})$ to the definition of $\mathcal A$ in \eqref{AA}.
Furthermore, the term in \eqref{AA}
%$$G_{L}\Big(\sqrt{1-\frac{z}{u_{L}}}\zeta _{L}-\sqrt{\frac{\sigma }{\Delta (u_{L}-z)}}y,u_L\Big ) - G_{L}(\zeta_L,u_L)$$
is bounded by
$$G_{L}\Big(\sqrt{1-\frac{z}{u_{L}}}\zeta _{L}-\sqrt{\frac{\sigma }{\Delta (u_{L}-z)}}y,u_L\Big ) - G_{L}(\zeta_L,u_L)\leq - \delta_0\varepsilon ^2$$
for some $\delta_0 >0$.
Furthermore, on the set $\mathcal L^*_{u}$ we have that
$\lambda (u_{L})+ o( yu_L^{-1}) +O(y^2zu_L^{-2})=o(1).$
Therefore, we have the bound
$S(w,y,z)\geq u_{L}^{2}+w^{2}+\frac{\Delta ^{2}(w+z)^{2}}{A-\Delta ^{2}} +2u_{L}\mathcal{A}/\sigma + \delta_0 \varepsilon^2 u_L$
and further
\begin{eqnarray*}
P\left( \max_{x\in \lbrack L-u_{L}^{-1/2+\delta },L]}|v^{\prime
}(x)|>b; \mathcal L_{u_L}^*; ~~\left\vert \sqrt{1-\frac{z}{u_{L}}}\zeta _{L}-\sqrt{\frac{\sigma }{\Delta (u_{L}-z)}}y - \zeta_L\right\vert\geq \varepsilon \right) &= &o(1)u_{L}^{-1}e^{-u_{L}^{2}/2}.
\end{eqnarray*}
\end{proof}

\end{document}